\newtheorem{theorem}{Theorem}[section]
\newtheorem{lemma}[theorem]{Lemma}
\newtheorem{proposition}[theorem]{Proposition}
\newtheorem{corollary}[theorem]{Corollary}
\newtheorem{thmx}{Theorem}
\theoremstyle{remark}
\newtheorem{remark}[theorem]{Remark}
\newtheorem*{claim*}{Claim}
\newcommand{\Exp}{\operatorname{Exp}}
\newcommand{\II}{\mathbb{II}}
\newcommand{\id}{\operatorname{id}}
\newcommand{\g}{\mathfrak}
\newcommand{\ad}{\operatorname{ad}}
\newcommand{\Ad}{\operatorname{Ad}}
\newcommand{\Span}{\operatorname{span}}
\newcommand{\R}{\mathbb{R}}
\begin{document}
\title[Codimension two polar homogeneous foliations]
{Codimension two polar homogeneous foliations on symmetric spaces of noncompact type}

\author[J.~C.\ D\'{\i}az-Ramos]{Jos\'e Carlos D\'{\i}az-Ramos}
\author[J.~M.\ Lorenzo-Naveiro]{Juan Manuel Lorenzo-Naveiro}

\address{Department of Mathematics, CITMAGA, University of Santiago de Compostela, Spain}

\email{josecarlos.diaz@usc.es}
\email{jm.lorenzo@usc.es}

\begin{abstract}
We classify homogeneous polar foliations of codimension two on irreducible symmetric spaces of noncompact type up to orbit equivalence.
Any such foliation is either hyperpolar or the canonical extension of a polar homogeneous foliation on a rank one symmetric space.
\end{abstract}


\subjclass[2010]{53C35, 53C12, 57S20, 57S25}

\keywords{Polar action, foliation, symmetric space}

\thanks{Supported by projects PID2022-138988NB-I00/AEI/10.13039/501100011033 (Spain) and ED431F 2020/04, ED431C 2023/31 (Xunta de Galicia, Spain). The second author
acknowledges support of the FPI program (Spain).}

\maketitle
\section{Introduction}

A proper isometric action of a Lie group $G$ on a Riemannian manifold $M$ is said to be \emph{polar} if there exists a connected and complete submanifold $\Sigma$ that intersects every orbit orthogonally. The action is \emph{hyperpolar} if it admits a flat section (with the induced metric). Many results in Algebra and Geometry can be thought of in terms of polar actions. For instance, the system of polar coordinates in the Euclidean plane $\R^{2}$ is connected to the polarity of the standard representation of the orthogonal group $\mathsf{O}(2)$ in the plane. In a similar fashion, the spectral theorem for hermitian matrices can be restated as follows: the action of the unitary group $\mathsf{U}(n)$ on the vector space of $n \times n$ hermitian matrices by conjugation is hyperpolar; a section is given by the subspace of diagonal matrices. 
Based on this last example, we usually refer to the points of a section as canonical forms of the elements of the ambient space (see~\cite{PalaisTerngForms} for more details). A final example with a more Lie-theoretic flavor can be seen in the adjoint action of a compact Lie group, whose sections are the maximal tori.

Broadly speaking, we should not expect a generic Riemannian manifold to admit a nontrivial polar action. Indeed, sections are known to be totally geodesic submanifolds of the ambient space, so the existence of polar actions implies the existence of (sufficiently large) families of globally defined Killing fields and totally geodesic submanifolds orthogonal to each other; both objects are rare in a space with no restrictions~\cite[Theorem~A]{MurphyWilhelm}. Thus, it should not come as a surprise that polar actions can only be possible in Riemannian spaces with a certain degree of symmetry. Some structural results concerning spaces that do admit polar actions can be seen for example in~\cite[Theorem~A]{FangGroveThorbergsson}, and~\cite[Theorem~1]{KollrossPodesta}, and its subsequent generalization in~\cite[Theorem~1.1]{DiScala}. 
Consequently, the most natural classes of Riemannian manifolds where we can develop a fruitful study of these actions are those of homogeneous and symmetric spaces.

Our main interest in this paper concerns the classification of polar actions (up to orbit equivalence) on manifolds with symmetries.
Dadok~\cite{Dadok} proved that any polar representation on a Euclidean space is orbit equivalent to the isotropy representation of a symmetric space, thus solving the problem in the round sphere $\mathsf{S}^{n}$.
The classification in Euclidean spaces follows easily from Dadok's result.
The corresponding classification of polar actions in the real hyperbolic space $\R\mathsf{H}^{n}$ was given by Wu in~\cite{Wu}.

As for symmetric spaces, while the problem is nearing its full conclusion in the compact setting (see~\cite{Kollross-Lytchak} and the references therein), little is known about polar actions on noncompact symmetric spaces, our main interest for this article.
Nevertheless, there are already complete classifications in real and complex hyperbolic spaces~\cite{DiazDominguezKollross20}. 
For the special case of cohomogeneity one actions, we have the works of Berndt and Tamaru (see~\cite{BerndtTamaruC1} and the references in it) as well as a more recent structural result given by the first author, Dom\'inguez-V\'azquez and Otero~\cite{DiazDominguezOtero22}.

We say that the orbits of a proper action form a \emph{homogeneous foliation} if they all have the same orbit type. Cohomogeneity one homogeneous foliations have been classified up to orbit equivalence in irreducible noncompact symmetric spaces by Berndt and Tamaru in~\cite{BerndtTamaruC1Foliations}. 
This result has been extended by Solonenko~\cite{Solonenko} for the reducible case. 
In addition, Berndt, Tamaru and the first author give in~\cite{BerndtDiazTamaru10} a procedure to construct all possible hyperpolar homogeneous foliations on any symmetric space of noncompact type;
it is also shown that there are polar foliations on noncompact irreducible symmetric spaces of higher rank that are not hyperpolar.
The results presented in these two papers make extensive use of tools from the theory of real semisimple Lie algebras, especially the root space and Iwasawa decompositions for these objects. At any rate, we are yet to obtain general results concerning polar homogeneous foliations that are not hyperpolar.

The aim of this paper is to start the study of polar nonhyperpolar homogeneous foliations on symmetric spaces of noncompact type. 
We determine all of these that have the hyperbolic plane $\R \mathsf{H}^{2}$ as a section. As a consequence, combining this work with~\cite{BerndtDiazTamaru10}, we obtain a list of all polar homogeneous foliations of codimension two in any such space. In order to state the main result of this paper we need to introduce some concepts.
See Section~\ref{Sec:prelims} for further details.

A Riemannian symmetric space of noncompact type can be written as a quotient $M=G/K$, where $G$ is a semisimple Lie group and $K$ is the isotropy group at a point $o\in M$.
The Lie algebra $\g{g}$ of $G$ has a Cartan decomposition $\g{g}=\g{k}\oplus\g{p}$, where $\g{p}\cong T_{o}M$ is the orthogonal complement of $\g{k}$ in $\g{g}$ with respect to the Killing form. We normalize the metric on $M$ so that its restriction to $\g{p}$ coincides with the Killing form of $\g{g}$ restricted to $\g{p}$.
We choose a maximal abelian subspace $\g{a}$ of $\g{p}$.
This determines a root space decomposition $\g{g}=\g{g}_0\oplus\bigl(\bigoplus_{\lambda\in\Sigma}\g{g}_\lambda\bigr)$, where $\Sigma$ is the set of roots with respect to $\g{a}$.
We choose a positivity criterion on $\Sigma$ and denote by $\Sigma^+$ the set of positive roots.
Let $\Lambda\subseteq\Sigma^+$ be the corresponding set of simple roots.
We define $\g{n}=\bigoplus_{\lambda\in\Sigma^+}\g{g}_\lambda$.
Then we have an Iwasawa decomposition $\g{g}=\g{k}\oplus\g{a}\oplus\g{n}$. 

Let $\Phi\subseteq\Lambda$ be a subset of simple roots.
Then $\Phi$ determines a so-called parabolic subgroup $Q_\Phi$ with Langlands decomposition $Q_\Phi=M_\Phi A_\Phi N_\Phi$, where $M_\Phi$ is reductive, $A_\Phi$ is abelian, and $N_\Phi$ is nilpotent.
The totally geodesic submanifold $B_\Phi=M_\Phi\cdot o$ is a symmetric space of noncompact type of rank $\lvert\Phi\rvert$ that is called the boundary component associated with $\Phi$.
If $H_\Phi$ is a subgroup of the isometry group of $B_\Phi$, then $H_\Phi A_\Phi N_\Phi$ induces an isometric action on $M$.
We call this action the canonical extension of the isometric action on the boundary component $B_\Phi$ to the symmetric space $M$.  See~\cite{BerndtTamaruC1} for further details.

The main result of this paper is the following:

\begin{thmx}\label{th:MainTheoremCanonicalExtension}
	Let $M$ be a connected irreducible Riemannian symmetric space of noncompact type.
	Every codimension two polar nonhyperpolar homogeneous foliation on $M$ is orbit equivalent to the canonical extension of a codimension two polar nonhyperpolar homogeneous foliation on a boundary component of rank one in $M$.
\end{thmx}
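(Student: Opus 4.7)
The strategy is to reduce the classification to the rank-one setting by showing that any such foliation is orbit equivalent to a canonical extension. Following the standard approach in this area (cf.\ \cite{BerndtDiazTamaru10}), I would first pass to an orbit equivalent foliation induced by a connected closed subgroup $H\subseteq AN$. Its Lie algebra $\g{h}\subseteq\g{a}\oplus\g{n}$ has codimension two, and the goal becomes to exhibit a subset $\Phi\subseteq\Lambda$ with $\lvert\Phi\rvert=1$ and a subalgebra $\g{h}_{\Phi}\subseteq\g{m}_{\Phi}\cap(\g{a}\oplus\g{n})$ such that, up to a further orbit equivalence, $\g{h}=\g{h}_{\Phi}\oplus\g{a}_{\Phi}\oplus\g{n}_{\Phi}$.

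A section $\Sigma$ of the foliation is a two-dimensional totally geodesic submanifold of $M$; being nonhyperpolar forces $\Sigma$ to be nonflat. The only such submanifolds in a symmetric space of noncompact type are (rescaled copies of) the real hyperbolic plane $\R\mathsf{H}^{2}$, so $\Sigma\cong\R\mathsf{H}^{2}$. The tangent space $T_{o}\Sigma\subseteq\g{p}$ at a chosen base point is then the non-compact part of an $\g{sl}(2,\R)$-triple inside $\g{g}$, which, after conjugating by an element of $K$ and possibly applying a Weyl-group element, can be arranged to be generated by the root vectors of a single simple restricted root $\alpha\in\Lambda$. Taking $\Phi=\{\alpha\}$, the associated boundary component $B_{\Phi}$ is a rank-one totally geodesic symmetric subspace of $M$ containing $\Sigma$.

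The heart of the proof is to show that $\g{h}$ respects the Langlands decomposition $Q_{\Phi}=M_{\Phi}A_{\Phi}N_{\Phi}$ in the required way. Since $\Sigma\subseteq B_{\Phi}$ and the section is orthogonal to every orbit, the subspaces $\g{a}_{\Phi}$ and $\g{n}_{\Phi}$, which are orthogonal to $T_{o}\Sigma$ inside $\g{a}\oplus\g{n}$, must already be contained in $\g{h}$. Once this inclusion is established, an inner automorphism of $AN$ should eliminate any residual ``mixed'' terms in $\g{h}$ linking $\g{m}_{\Phi}\cap(\g{a}\oplus\g{n})$ with $\g{a}_{\Phi}\oplus\g{n}_{\Phi}$, producing the direct-sum decomposition up to orbit equivalence. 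Appealing to the separate classification (carried out earlier in this paper) of codimension two polar nonhyperpolar homogeneous foliations on rank-one symmetric spaces with an $\R\mathsf{H}^{2}$ section then concludes the argument.

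The main difficulty I foresee is this last reduction: ruling out subalgebras $\g{h}$ that genuinely mix the boundary piece $\g{m}_{\Phi}\cap(\g{a}\oplus\g{n})$ with $\g{a}_{\Phi}\oplus\g{n}_{\Phi}$ and cannot be conjugated into a pure canonical extension form. The nontrivial bracket relations among root spaces mean that a naive projection onto $\g{m}_{\Phi}$ need not yield a subalgebra, so a careful analysis of the restricted root system relative to $\Phi=\{\alpha\}$, together with the precise constraints imposed by polarity on individual root spaces, will be essential to close the gap.
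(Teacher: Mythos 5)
The central gap is your third paragraph's starting point: you claim that the tangent space of the section, being the noncompact part of an $\g{sl}(2,\R)$-triple, can be conjugated (by $K$ and the Weyl group) into the $\g{sl}(2,\R)$ generated by the root spaces of a single simple root $\alpha$, and that therefore $\Sigma\subseteq B_{\{\alpha\}}$. This is false for general totally geodesic hyperbolic planes: $\g{sl}(2,\R)$-subalgebras of $\g{g}$ correspond to nilpotent orbits via Jacobson--Morozov, and most of them (for instance the principal one in $\g{sl}(3,\R)$, or diagonal ones spread over several root spaces) are not conjugate to a root $\g{sl}(2,\R)$, let alone to one attached to a \emph{simple} root. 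That the section of a codimension two polar nonhyperpolar homogeneous \emph{foliation} is of this very special type is precisely the hard content of the classification; in the paper it is extracted from the polarity criterion (Proposition~\ref{th:criterion}) through the whole of Section~\ref{Sec:proof}: reduction to a maximally noncompact Borel subalgebra $\g{t}\oplus\g{a}\oplus\g{n}$ (Proposition~\ref{th:Borel:tan}), then Lemmas~\ref{lemma:extendedProjectionIsNotAN}--\ref{lemma:normalvectorinp1} forcing $\g{s}_{\g{p}}^{\perp}\subseteq\g{a}\oplus\g{p}^{1}$ after conjugation by an element of $N$, and further work pinning it to a single simple root. Your proposal assumes this conclusion instead of proving it. Two subsidiary steps are also not free: the initial reduction to a group inside $AN$ is not available directly (Mostow only gives $\g{s}\subseteq\g{t}\oplus\g{a}\oplus\g{n}$ with a possibly nonzero compact part, and discarding the $\g{t}$-component up to orbit equivalence is the content of Proposition~\ref{prop:orbit-equivalence} together with Lemmas~\ref{lemma:OrbitEquivalenceNonClosed} and~\ref{lemma:isotropyAlgebras}); and the inference that $\g{a}_{\Phi}\oplus\g{n}_{\Phi}\subseteq\g{h}$ ``because these are orthogonal to $T_{o}\Sigma$'' only makes sense once $T_{o}\Sigma\subseteq T_{o}B_{\Phi}$ is known, which is again the unproven point. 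Finally, there is no separate rank-one classification ``carried out earlier in this paper'' to appeal to, so the last step of your reduction has nothing to land on.

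For comparison, the paper's logic runs in the opposite direction: it proves the explicit list of Theorem~\ref{th:MainTheoremList} directly in arbitrary rank (Section~\ref{Sec:proof}), and Theorem~\ref{th:MainTheoremCanonicalExtension} is then obtained in Section~\ref{Sec:examples} simply by observing that, with $\Phi=\{\alpha\}$, each model algebra splits as $\g{s}_{\xi}=\hat{\g{s}}_{\xi}\oplus\g{a}_{\Phi}\oplus\g{n}_{\Phi}$ and $\g{s}_{\g{v}}=\hat{\g{s}}_{\g{v}}\oplus\g{a}_{\Phi}\oplus\g{n}_{\Phi}$, where $\hat{\g{s}}_{\xi}$ and $\hat{\g{s}}_{\g{v}}$ define polar nonhyperpolar foliations on the rank-one boundary component $B_{\{\alpha\}}$; hence every example is manifestly a canonical extension. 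A ``reduce to rank one first'' strategy of the kind you sketch is conceivable, but as written it presupposes the structural facts that constitute the proof, so the argument does not close.
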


More explicitly, we prove:

\begin{thmx}\label{th:MainTheoremList}
Let $M=G/K$ be a connected irreducible Riemannian symmetric space of noncompact type.
Then, a codimension two homogeneous polar nonhyperpolar foliation on $M$ is orbit equivalent to the orbit foliation of the closed connected Lie group whose Lie algebra is given by one of the following possibilities:
\begin{enumerate}[\rm (i)]
\item $(\ker\alpha)\oplus(\g{n}\ominus\ell_\alpha)$, where $\alpha\in\Lambda$ is a simple root, and $\ell_\alpha$ is a line in $\g{g}_\alpha$, or\label{th:main:ker+n-l}
\item $\g{a}\oplus(\g{n}\ominus\g{v}_\alpha)$, where $\alpha\in\Lambda$ is a simple root, and $\g{v}_\alpha$ is a 2-dimensional abelian subspace of $\g{g}_\alpha$.\label{th:main:a+n-v}
\end{enumerate}
\end{thmx}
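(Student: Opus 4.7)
By Theorem~\ref{th:MainTheoremCanonicalExtension}, every codimension two polar nonhyperpolar homogeneous foliation on $M$ is orbit equivalent to the canonical extension of one on a rank one boundary component $B_\alpha$, $\alpha\in\Lambda$. It therefore suffices to classify codimension two polar nonhyperpolar homogeneous foliations on a rank one symmetric space $B_\alpha$ and then to compute their canonical extensions.

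I would fix $\alpha\in\Lambda$ and work in the solvable model $B_\alpha\cong A_\alpha N_\alpha$, whose Lie algebra $\g{a}_\alpha\oplus\g{g}_\alpha\oplus\g{g}_{2\alpha}$ (with $\dim\g{a}_\alpha=1$) acts simply transitively on $B_\alpha$. A codimension two homogeneous foliation corresponds, up to orbit equivalence, to a codimension two subalgebra $\g{s}$ of $\g{a}_\alpha\oplus\g{n}_\alpha$. I would organize the analysis by whether $\g{a}_\alpha\subseteq\g{s}$, and by the projections of $\g{s}$ onto $\g{g}_\alpha$ and $\g{g}_{2\alpha}$. In each subcase the polar condition translates to the requirement that the orthogonal complement $\g{s}^{\perp}$ at $o$ exponentiate to a totally geodesic submanifold, equivalently that it be a Lie triple system inside $\g{p}$. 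Non-hyperpolarity then rules out those subcases whose section is a flat plane. After a careful case analysis, only two shapes of $\g{s}$ should survive, up to conjugation by elements of the normalizer of $\g{s}$ in $A_\alpha N_\alpha$: namely $\g{s}=\g{n}_\alpha\ominus\ell$ with $\ell$ a line in $\g{g}_\alpha$, and $\g{s}=\g{a}_\alpha\oplus(\g{n}_\alpha\ominus\g{v})$ with $\g{v}$ a two-dimensional abelian subspace of $\g{g}_\alpha$.

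To finish, I would apply the canonical extension $\g{s}\mapsto\g{s}\oplus\g{a}_\Phi\oplus\g{n}_\Phi$ with $\Phi=\{\alpha\}$. Using $\g{a}=\g{a}_\alpha\oplus\ker\alpha$ and $\g{n}=\g{n}_\alpha\oplus\g{n}_\Phi$, the first case extends to $(\ker\alpha)\oplus(\g{n}\ominus\ell_\alpha)$, giving case (i) of the theorem, and the second extends to $\g{a}\oplus(\g{n}\ominus\g{v}_\alpha)$, giving case (ii). The main obstacle is the case analysis on $B_\alpha$: one must show that polarity forces $\ell_\alpha$ and $\g{v}_\alpha$ to lie entirely inside $\g{g}_\alpha$, with no component in $\g{g}_{2\alpha}$, and that $\g{v}_\alpha$ must be abelian. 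This requires a careful verification of the Lie triple system condition on $\g{s}^{\perp}$, exploiting the bracket relations $[\g{g}_\alpha,\g{g}_\alpha]\subseteq\g{g}_{2\alpha}$ and $[\g{g}_\alpha\oplus\g{g}_{2\alpha},\g{g}_{2\alpha}]=0$, together with the adjoint action of $\g{a}_\alpha$ on $\g{n}_\alpha$, and a subsequent check that the two subalgebras just described actually do produce polar foliations with section isometric to $\R\mathsf{H}^{2}$.
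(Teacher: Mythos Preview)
Your proposal has a circularity problem. In this paper, Theorem~\ref{th:MainTheoremCanonicalExtension} is not proved independently; it is deduced \emph{from} Theorem~\ref{th:MainTheoremList} in Section~\ref{Sec:examples} (see the paragraph ending ``We deduce that Theorem~\ref{th:MainTheoremList} implies Theorem~\ref{th:MainTheoremCanonicalExtension}''). So invoking Theorem~\ref{th:MainTheoremCanonicalExtension} as the first step of a proof of Theorem~\ref{th:MainTheoremList} is circular. The paper's actual argument in Section~\ref{Sec:proof} works directly on $M$: it reduces to a solvable group inside a maximally noncompact Borel subalgebra $\g{t}\oplus\g{a}\oplus\g{n}$ (Propositions~\ref{th:solvable} and~\ref{th:Borel:tan}), and then shows through a sequence of lemmas (\ref{lemma:extendedProjectionIsNotAN}, \ref{lemma:normalap1}, \ref{lemma:normalvectorinp1}, \ref{prop:orbit-equivalence}, \ref{prop:normalVectorsInOneGalpha}, \ref{prop:caseDim2NormalVectors}, \ref{prop:caseDim1OrbitEquivalence}) that the normal space $\g{s}_\g{p}^\perp$ can be conjugated into $\R H_\alpha\oplus(1-\theta)\g{g}_\alpha$ for a single simple root $\alpha$, and that the orbit foliation then coincides with one of the two model actions.

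Even if one granted Theorem~\ref{th:MainTheoremCanonicalExtension} independently, your rank-one step has a gap: you assert that a codimension two homogeneous foliation on $B_\alpha$ corresponds to a codimension two subalgebra of $\g{a}_\alpha\oplus\g{n}_\alpha$. In fact the solvable group one obtains from Proposition~\ref{th:solvable} only lands in a Borel subalgebra $\g{t}\oplus\g{a}_\alpha\oplus\g{n}_\alpha$ with a nontrivial abelian piece $\g{t}\subseteq\g{k}_0$ (Theorem~\ref{thm:MostowSimpleRoots}), and eliminating this compact part is nontrivial---it is precisely the content of Proposition~\ref{prop:orbit-equivalence} (via Lemmas~\ref{lemma:OrbitEquivalenceNonClosed} and~\ref{lemma:isotropyAlgebras}) in the paper. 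Without that step your case analysis on subalgebras of $\g{a}_\alpha\oplus\g{n}_\alpha$ does not cover all possibilities. If you want to salvage the rank-one strategy, you would first need an independent proof of Theorem~\ref{th:MainTheoremCanonicalExtension} and then essentially reproduce the Borel-reduction and torus-elimination arguments of Section~\ref{Sec:proof} inside $B_\alpha$; at that point you are doing the paper's proof restricted to rank one, with no real savings.
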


We will show in Section~\ref{Sec:examples} that different choices of $\ell_\alpha$ or $\g{v}_\alpha$ above give rise to congruent foliations. 
We also determine the mean curvature of their leaves. A direct consequence of this computation is that

\begin{corollary}\label{cor:canonicalExtensionRH2}
	If $\mathcal{F}$ is a codimension two polar nonhyperpolar homogeneous foliation on $M$, then $\mathcal{F}$ is harmonic if and only if $\mathcal{F}$ is orbit equivalent to the canonical extension of the trivial action on a boundary component homothetic to the hyperbolic plane $\R \mathsf{H}^{2}$.
\end{corollary}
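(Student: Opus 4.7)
The strategy is to combine Theorem~\ref{th:MainTheoremList}, which describes every codimension two polar nonhyperpolar homogeneous foliation on $M$ as one of two types, with an explicit computation of the mean curvature vector field of the leaves in each case; the corollary then follows by identifying when this vector field vanishes identically.

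In case (\ref{th:main:ker+n-l}), the subalgebra $\g{h}=\ker\alpha\oplus(\g{n}\ominus\ell_\alpha)$ is an ideal of $\g{s}=\g{a}\oplus\g{n}$, since $\alpha$ is simple and hence cannot be written as $\lambda+\mu$ with $\lambda,\mu\in\Sigma^+$. As a consequence, left translations in $AN$ permute the leaves of $\mathcal{F}$ and realize isometries between them, so the mean curvature is constant across the foliation. Applying the Koszul formula for the left-invariant metric on $AN$ gives $\nabla_{X_\lambda}X_\lambda=H_\lambda$ for each unit $X_\lambda\in\g{g}_\lambda$, where $H_\lambda\in\g{a}$ is the metric dual of $\lambda$. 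Summing over an orthonormal basis of $\g{h}$ and projecting onto the normal space $\R H_\alpha\oplus\ell_\alpha$, the $\ell_\alpha$-component vanishes (the relevant brackets land in $\g{g}_{2\alpha}$ or $\g{g}_{\alpha+\lambda}$, both orthogonal to $\g{g}_\alpha$), and the $\R H_\alpha$-component reduces, via the Weyl-invariance identity $\sum_{\lambda\in\Sigma^+}m_\lambda\langle\lambda,\alpha\rangle=(m_\alpha+2m_{2\alpha})\langle\alpha,\alpha\rangle$ (obtained by comparing $\sum m_\lambda\lambda$ with its image under $s_\alpha$), to the scalar $m_\alpha+2m_{2\alpha}-1$ times $H_\alpha$. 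This vanishes exactly when $\dim\g{g}_\alpha=1$ and $2\alpha\notin\Sigma$, which is precisely the condition that $B_{\{\alpha\}}$ be homothetic to $\R\mathsf{H}^2$. Under this condition $\ell_\alpha=\g{g}_\alpha$ is forced and $\g{h}$ coincides with the Lie algebra of $A_{\{\alpha\}}N_{\{\alpha\}}$, so $\mathcal{F}$ is the canonical extension of the trivial action on $B_{\{\alpha\}}$.

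In case (\ref{th:main:a+n-v}), the subalgebra $\g{h}=\g{a}\oplus(\g{n}\ominus\g{v}_\alpha)$ is not an ideal of $\g{s}$ (since $[\g{a},\g{v}_\alpha]=\g{v}_\alpha\not\subseteq\g{h}$), so distinct leaves are not pairwise isometric and the mean curvature can vary from leaf to leaf. At $o$ the Koszul calculation yields a vanishing mean curvature, because every contribution $\nabla_{X_\lambda}X_\lambda=H_\lambda$ lies in $\g{a}\subseteq\g{h}$ and is therefore orthogonal to the normal direction $\g{v}_\alpha\subseteq\g{g}_\alpha$. To detect non-harmonicity one evaluates instead at a translated point: for $Y\in\g{v}_\alpha\setminus\{0\}$, the leaf through $\exp(Y)\cdot o$ corresponds to the conjugated subalgebra $\g{h}_Y=\Ad(\exp(-Y))\g{h}$ (a polynomial expression in the nilpotent operator $\ad(Y)$), and reapplying the Koszul formula to $\g{h}_Y$ produces a nonzero mean curvature vector, proportional to $H_\alpha-Y$. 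Hence case (\ref{th:main:a+n-v}) foliations are never harmonic.

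Combining these computations, $\mathcal{F}$ is harmonic if and only if it falls in case (\ref{th:main:ker+n-l}) with $\dim\g{g}_\alpha=1$ and $2\alpha\notin\Sigma$, equivalently if and only if it is orbit equivalent to the canonical extension of the trivial action on a boundary component homothetic to $\R\mathsf{H}^2$. The main subtlety lies in case (\ref{th:main:a+n-v}): since the mean curvature accidentally vanishes at $o$, it is necessary to transport to a neighboring leaf in order to certify non-harmonicity, and this is what makes the computation more delicate than in case (\ref{th:main:ker+n-l}).
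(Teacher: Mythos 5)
Your argument is correct and essentially reproduces the paper's proof: it combines the classification of Theorem~\ref{th:MainTheoremList} with explicit mean curvature computations in the solvable model $AN$, using that $\ker\alpha\oplus(\g{n}\ominus\ell_\alpha)$ is an ideal of $\g{a}\oplus\g{n}$ (so all leaves are congruent and the trace $(\dim\g{g}_\alpha+2\dim\g{g}_{2\alpha}-1)H_\alpha$ decides harmonicity), and, in case~(\ref{th:main:a+n-v}), passing to the leaf through $\Exp(t\xi)$ via $\Ad(\Exp(-t\xi))$ to see that its mean curvature is nonzero for $t\neq 0$. The only slips are minor: the paper's computation yields a vector proportional to $tH_\alpha-2\xi$ rather than $H_\alpha-Y$, and its nonvanishing also uses that $\dim\g{g}_\alpha+2\dim\g{g}_{2\alpha}-1>0$, which holds because $\g{g}_\alpha$ contains the abelian plane $\g{v}_\alpha$; neither point affects your conclusion.
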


We say that a foliation $\mathcal{F}$ is harmonic if all of its leaves are minimal submanifolds of $M$. Equivalently, $\mathcal{F}$ is harmonic when the canonical projection from $M$ to the space of leaves of $\mathcal{F}$ is a harmonic map.

As a result of combining Theorem~\ref{th:MainTheoremList} with \cite{BerndtDiazTamaru10}, Corollary~\ref{corol:main} states the complete classification of homogeneous polar foliations of codimension two.
Recall that two roots $\alpha$, $\beta\in\Lambda$ are said to be disconnected if $\alpha+\beta$ is not a root (equivalently, if they are orthogonal).

\begin{corollary}\label{corol:main}
A codimension two homogeneous polar foliation on $M$ is orbit equivalent to the orbit foliation of a closed connected Lie group whose Lie algebra is:
\begin{enumerate}[\rm (a)]
\item $(\g{a}\ominus\g{v})\oplus\g{n}$, where $\g{v}$ is a $2$-dimensional subspace of $\g{a}$, or\label{corol:main:a-v+n}
\item $(\g{a}\ominus\ell)\oplus(\g{n}\ominus\ell_\alpha)$, where $\alpha\in\Lambda$ is a simple root, $\ell_\alpha$ is a line in $\g{g}_\alpha$, and $\ell$ is a line in $\ker\alpha$, or\label{corol:main:a-l+n-l}
\item $\g{a}\oplus\bigl(\g{n}\ominus(\ell_\alpha\oplus\ell_\beta)\bigr)$, where $\alpha$, $\beta\in\Lambda$ are orthogonal simple roots, and $\ell_\lambda$ is a line in $\g{g}_\lambda$, $\lambda\in\{\alpha,\beta\}$, or\label{corol:main:a+n-2l}
\item $(\ker\alpha)\oplus(\g{n}\ominus\ell_\alpha)$, where $\alpha\in\Lambda$ is a simple root, and $\ell_\alpha$ is a line in $\g{g}_\alpha$, or\label{corol:main:ker+n-l}
\item $\g{a}\oplus(\g{n}\ominus\g{v}_\alpha)$, where $\alpha\in\Lambda$ is a simple root, and $\g{v}_\alpha$ is a 2-dimensional abelian subspace of $\g{g}_\alpha$.\label{corol:main:a+n-v}
\end{enumerate}
\end{corollary}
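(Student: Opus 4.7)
The plan is to deduce Corollary~\ref{corol:main} by combining Theorem~\ref{th:MainTheoremList} with the classification of hyperpolar homogeneous foliations on symmetric spaces of noncompact type established in~\cite{BerndtDiazTamaru10}. Since any codimension two polar homogeneous foliation $\mathcal{F}$ on $M$ is either hyperpolar or nonhyperpolar, these two halves can be treated independently and then glued together.

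If $\mathcal{F}$ is nonhyperpolar, Theorem~\ref{th:MainTheoremList} applies directly and yields exactly the two descriptions listed as cases~(\ref{corol:main:ker+n-l}) and~(\ref{corol:main:a+n-v}) of the corollary; nothing further is required in this half.

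If $\mathcal{F}$ is hyperpolar, I appeal to the structure theorem from~\cite{BerndtDiazTamaru10}. Up to orbit equivalence, every hyperpolar homogeneous foliation on $M$ is the orbit foliation of a connected closed subgroup whose Lie algebra has the form $(\g{a}\ominus\g{v})\oplus(\g{n}\ominus\bigoplus_{\alpha\in\Phi}\ell_\alpha)$, where $\g{v}\subseteq\g{a}$ is a subspace, $\Phi\subseteq\Lambda$ is a (possibly empty) set of simple roots that are pairwise orthogonal and orthogonal to $\g{v}$, and $\ell_\alpha\subseteq\g{g}_\alpha$ is a line for every $\alpha\in\Phi$; the codimension of the resulting foliation equals $\dim\g{v}+|\Phi|$. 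For codimension two, the only possibilities for the pair $(\dim\g{v},|\Phi|)$ are $(2,0)$, $(1,1)$ and $(0,2)$, and these match cases~(\ref{corol:main:a-v+n}),~(\ref{corol:main:a-l+n-l}) and~(\ref{corol:main:a+n-2l}) respectively. In the middle case, the orthogonality of the single simple root $\alpha\in\Phi$ to the line $\ell\subseteq\g{v}\subseteq\g{a}$ translates precisely into the condition $\ell\subseteq\ker\alpha$ appearing in item~(\ref{corol:main:a-l+n-l}).

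The main obstacle here will not be substantive: both ingredients are already available, so the argument amounts to a clean case split according to $(\dim\g{v},|\Phi|)$. The only points needing some care are to verify that the hyperpolar classification of~\cite{BerndtDiazTamaru10} specializes to codimension two without any additional admissibility constraints on $\g{v}$ and $\Phi$, and to note that the five families above are genuinely distinct, which is immediate from the hyperpolar/nonhyperpolar dichotomy together with elementary Lie-algebraic invariants (such as the dimension of the intersection of the foliation's Lie algebra with $\g{a}$) that separate the three hyperpolar families from one another and, independently, the two nonhyperpolar families from one another.
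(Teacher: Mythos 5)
Your argument is correct and is exactly the route the paper takes: the corollary is obtained by splitting into the hyperpolar case, which is covered by the classification of hyperpolar homogeneous foliations in~\cite{BerndtDiazTamaru10} (giving cases~(\ref{corol:main:a-v+n})--(\ref{corol:main:a+n-2l}) for codimension two), and the nonhyperpolar case, which is Theorem~\ref{th:MainTheoremList} (giving cases~(\ref{corol:main:ker+n-l}) and~(\ref{corol:main:a+n-v})). The paper states this combination without further elaboration, so your write-up, including the bookkeeping of $(\dim\g{v},\lvert\Phi\rvert)$ and the condition $\ell\subseteq\ker\alpha$, is a faithful (slightly more detailed) version of the same proof.
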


Examples~(\ref{corol:main:a-v+n}) to~(\ref{corol:main:a+n-2l}) of Corollary~\ref{corol:main} are hyperpolar.

\begin{remark}
	The assumption that $M$ is irreducible implies that all the $G$-invariant metrics on $M$ are rescalings of the metric induced by the Killing form.
	If the space $M$ is reducible, then the results presented above still hold if we impose the condition that the metric on $M$ is (homothetic to) the one induced by the Killing form.
\end{remark}

The trivial action is always polar and the whole space is a section of this action.
On the other hand, cohomogeneity one polar actions are automatically hyperpolar.
An easy consequence of Theorem~\ref{th:MainTheoremList} is that the action corresponding to case~(\ref{th:main:ker+n-l}) exists and is nontrivial unless $\Sigma^+=\{\alpha\}$ and $\dim\g{g}_\alpha=1$. Thus,

\begin{corollary}
If $M$ is an irreducible symmetric space of noncompact type where all polar actions are hyperpolar, then $M$ is the real hyperbolic space $\R \mathsf{H}^2$.
\end{corollary}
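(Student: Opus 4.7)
The plan is to argue by contrapositive: if $M$ is irreducible of noncompact type and $M\neq\R\mathsf{H}^2$, I exhibit a codimension two polar nonhyperpolar foliation, in direct contradiction with the hypothesis. The natural source is case~(\ref{th:main:ker+n-l}) of Theorem~\ref{th:MainTheoremList}: for every simple root $\alpha\in\Lambda$ and every line $\ell_\alpha\subseteq\g{g}_\alpha$, the subspace $\g{h}_\alpha := (\ker\alpha)\oplus(\g{n}\ominus\ell_\alpha)$ is a Lie subalgebra of $\g{a}\oplus\g{n}$ (note $[\ker\alpha,\g{g}_\alpha]=0$ and that no bracket of positive root spaces lands in the simple root space $\g{g}_\alpha$), whose connected integral subgroup yields such a foliation whenever $\g{h}_\alpha$ is nontrivial. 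This is precisely the content of the ``easy consequence'' of Theorem~\ref{th:MainTheoremList} recorded immediately above the corollary.

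The first step is a dimension count to determine when $\g{h}_\alpha$ fails to be trivial. The summand $\ker\alpha$ has dimension $\dim\g{a}-1$, and the summand $\g{n}\ominus\ell_\alpha$ has dimension $\dim\g{n}-1$. The former vanishes exactly when $M$ has real rank one, and the latter vanishes exactly when $\g{n}$ itself is one-dimensional, i.e., when $\Sigma^+=\{\alpha\}$ and $\dim\g{g}_\alpha=1$. Hence $\g{h}_\alpha=0$ forces both conditions simultaneously; conversely, if the exceptional configuration fails, then some choice of $(\alpha,\ell_\alpha)$ produces a nontrivial $\g{h}_\alpha$ and hence a codimension two polar nonhyperpolar foliation on $M$, contradicting the hypothesis.

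The second step is to recognize the exceptional configuration geometrically. The condition $\Sigma^+=\{\alpha\}$ forces $M$ to be an irreducible rank one symmetric space of noncompact type, so $M$ is one of $\R\mathsf{H}^n$, $\mathbb{C}\mathsf{H}^n$, $\mathbb{H}\mathsf{H}^n$, or $\mathbb{O}\mathsf{H}^2$. Only $\R\mathsf{H}^n$ has restricted root system $A_1$ with a single positive root; the other three families have type $BC_1=\{\pm\alpha,\pm 2\alpha\}$ and so $\lvert\Sigma^+\rvert=2$. Inside $\R\mathsf{H}^n$ the unique positive root space has dimension $n-1$, so the requirement $\dim\g{g}_\alpha=1$ pins down $n=2$, giving $M=\R\mathsf{H}^2$, as claimed.

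There is no real obstacle: Theorem~\ref{th:MainTheoremList} does all the geometric work, and what is left is a dimension count plus a consultation of the rank one classification. The only point requiring minor care is the converse direction of Theorem~\ref{th:MainTheoremList}, namely that every nontrivial realization of case~(\ref{th:main:ker+n-l}) actually yields a codimension two polar nonhyperpolar foliation; this, however, is part of the classification statement itself.
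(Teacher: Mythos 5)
Your proposal is correct and follows essentially the same route as the paper: the paper also obtains the corollary by observing that the case~(\ref{th:main:ker+n-l}) construction yields a nontrivial codimension two polar nonhyperpolar foliation unless $\Sigma^{+}=\{\alpha\}$ and $\dim\g{g}_{\alpha}=1$, a configuration that forces $M=\R\mathsf{H}^{2}$. The only cosmetic remark is that the fact that these examples really are polar with a nonflat section is established in Theorem~\ref{thm:polarExamples} of Section~\ref{Sec:examples} rather than being literally part of the statement of Theorem~\ref{th:MainTheoremList}, but that is precisely the ingredient the paper itself invokes.
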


This contrasts sharply with the situation in the compact setting: polar actions on irreducible symmetric spaces of compact type and higher rank are always hyperpolar.
This follows from a series of papers by Kollross that concluded in the paper~\cite{Kollross-Lytchak} by Kollross and Lytchak.

We now describe the structure of this paper. In Section~\ref{Sec:prelims} we review the basic theory of real semisimple Lie algebras and symmetric spaces that is needed for our purposes. In Section~\ref{Sec:examples}, we present the list of codimension two polar nonhyperpolar homogeneous foliations that appear in Theorem~\ref{th:MainTheoremList}. 
We also determine the curvature of their sections, the extrinsic geometry of their orbits, and prove Theorem~\ref{th:MainTheoremCanonicalExtension}.
Finally, Section~\ref{Sec:proof} contains the proof of Theorem~\ref{th:MainTheoremList}.

\section{Preliminaries}\label{Sec:prelims}

In this section we introduce the concepts, notations, and preliminary results that are used throughout this paper. We follow~\cite{Helgason} for the theory on symmetric spaces, and~\cite{Knapp} for semisimple Lie algebras.
Since symmetric spaces of noncompact type are Hadamard manifolds, another interesting source of information is~\cite{Eberlein}.

\subsection{Semisimple Lie algebras and symmetric spaces}\label{sec:roots}\hfill

A connected Riemannian symmetric space of noncompact type can be represented as a quotient $M=G/K$, where $(G,K)$ is a symmetric pair.
The group $G$ acts almost effectively on $M$, and $K$ is taken to be the isotropy group of $G$ at a point $o\in M$ that we fix from now on.
Thus, the group $G$ is semisimple with finite center and $K$ is a maximal compact subgroup of $G$.
The Lie algebra of $G$ is denoted by $\g{g}$.
The Killing form $\mathcal{B}$ of $\g{g}$ is nondegenerate because $\g{g}$ is a real semisimple Lie algebra; 
$\mathcal{B}$ is known to be negative definite on $\g{k}$, the Lie algebra of $K$.
Let $\g{p}$ denote the orthogonal complement of~$\g{k}$ in~$\g{g}$ with respect to the Killing form.
Then, $\g{g}=\g{k}\oplus\g{p}$ is a Cartan decomposition of $\g{g}$, and $\g{p}$ can be identified with the tangent space $T_o M$. The Killing form $\mathcal{B}$ restricted to $\g{p}$ is positive definite.
Let $\theta$ be the Cartan involution associated with the previous Cartan decomposition, that is, $\theta\vert_{\g{k}}=\id_{\g{k}}$, and $\theta\vert_{\g{p}}=-\id_{\g{p}}$.
The equation $\langle X,Y\rangle=-\mathcal{B}(X,\theta Y)$, $X$,~$Y\in\g{g}$, defines a positive definite inner product on $\g{g}$ that will be used extensively from now on.
We normalize the metric on $M$ so that its restriction to $\g{p}\times\g{p}$ is precisely the inner product defined above.

As a matter of notation, if $\g{v}$ and $\g{w}$ are subspaces of $\g{g}$ we denote the orthogonal complement of $\g{w}$ in $\g{v}$ as
\[
\g{v}\ominus\g{w}=\{X\in\g{v}:\langle X,Y\rangle=0,\text{ for all $Y\in\g{w}$}\}.
\]
We also denote by $\g{v}_\g{w}$ the orthogonal projection of $\g{v}$ onto $\g{w}$.
The norm of $X\in\g{g}$ is denoted by $\lvert X\rvert$.

If $F\colon\g{g}\to\g{g}$ is a linear map, its adjoint is the map $F^*\colon\g{g}\to\g{g}$ satisfying $\langle F(X),Y\rangle=\langle X,F^*(Y)\rangle$ for any $X$, $Y\in\g{g}$.
Recall that, for $g\in G$, conjugation by $g$ is denoted by $I_g\colon G\to G$, $h\mapsto ghg^{-1}$.
The map $\Ad\colon G\to\mathsf{GL}(\g{g})$, $g\mapsto\Ad(g)=I_{g*}$, is called the adjoint representation of $\g{g}$,
and the differential of $\Ad$ is
the adjoint map $\ad\colon\g{g}\to\g{gl}(\g{g})$ given by $\ad(X)(Y)=[X,Y]$.
Since $\Ad$ is a homomorphism of Lie groups and $\ad$ is its differential, they are related by the Lie exponential map $\Exp\colon\g{g}\to G$ and the matrix exponential map of $\mathsf{GL}(\g{g})$ as $\Ad(\Exp(X))=e^{\ad(X)}$, $X\in\g{g}$.
Then, it follows that
\begin{align*}
\ad(X)^*&{}=-\ad(\theta X),&
\Ad(\Exp(X))^*&{}=e^{-\ad(\theta X)}.
\end{align*}
Note that $\ad(X)$ is skew-adjoint if $X\in\g{k}$ and self-adjoint if $X\in\g{p}$.

We choose a maximal abelian subspace $\g{a}$ of $\g{p}$.
Any two maximal abelian subspaces of $\g{p}$ are conjugate by an element of $K$;
in particular they have the same dimension, which is called the rank of $M$, denoted by $r=\dim\g{a}$.
If $\g{a}^*$ denotes the dual vector space of $\g{a}$, for each $\lambda\in\g{a}^*$ we define
\[
\g{g}_\lambda=\{X\in\g{g}:\ad(H)X=\lambda(H)X,\text{ for all $H\in\g{a}$}\}.
\]
If $\lambda\neq 0$ and $\g{g}_\lambda\neq\{0\}$ then $\lambda$ is called a restricted root (or simply root).
We denote by $\Sigma$ the set of restricted roots.
Since $\g{a}$ is abelian, $\ad(\g{a})$ is a commuting family of self-adjoint operators.
The corresponding simultaneous diagonalization
\[
\g{g}=\g{g}_0\oplus\Bigl(\bigoplus_{\lambda\in\Sigma}\g{g}_\lambda\Bigr),
\]
is called the restricted root space decomposition of $\g{g}$  determined by~$\g{a}$.
Here, we have $\g{g}_0=\g{k}_0\oplus\g{a}$, where $\g{k}_0=\g{g}_0\cap\g{k}=Z_{\g{k}}(\g{a})$ is the centralizer of $\g{a}$ in $\g{k}$.
Moreover, $\theta\g{g}_\lambda=\g{g}_{-\lambda}$ and $[\g{g}_\lambda,\g{g}_\mu]\subseteq\g{g}_{\lambda+\mu}$ for any $\lambda$, $\mu\in\Sigma$.

We also use the following notation: for $\lambda\in\Sigma$ we denote by $H_\lambda\in\g{a}$ the metric dual of $\lambda$, which is defined as $\langle  H_\lambda, H\rangle=\lambda(H)$, $H\in\g{a}$.
The inner product on $\g{g}$ induces an inner product on $\g{a}$ by restriction, and on $\g{a}^*$ by duality as $\langle\lambda,\mu\rangle=\langle H_\lambda,H_\mu\rangle$, $\lambda$,~$\mu\in\Sigma$.

The set $\Sigma$ is a (nonreduced) root system.
One can introduce a notion of positivity in $\Sigma$ as follows:
let $H_{0}$ be an element of $\g{a}\setminus \bigcup_{\lambda\in \Sigma}\ker \lambda$ (we say in this case that $H_{0}$ is a regular element).
Then, a root $\lambda$ is said to be positive (respectively, negative) if $\lambda(H_{0})$ is positive (respectively, negative).
Furthermore, a positive root $\alpha$ is simple if it cannot be written as a sum of positive roots.
Let $\Sigma^{+}$ be the set of positive roots, and $\Lambda\subseteq \Sigma^{+}$ the set of simple roots.
We note that any two choices of $\Sigma^{+}$ (and thus of $\Lambda$) are conjugate by an element of $N_{K}(\g{a})$, the normalizer of $\g{a}$ in $K$.
The cardinality of $\Lambda$ is equal to the rank of $\g{g}$, and $\Lambda$ is a basis of the vector space $\g{a}$.
We define $\g{n}=\bigoplus_{\lambda\in\Sigma^+}\g{g}_\lambda$, which is a nilpotent subalgebra of $\g{g}$.
Then, $\g{g}=\g{k}\oplus\g{a}\oplus\g{n}$ is a direct sum of vector spaces called the Iwasawa decomposition of~$\g{g}$.
The Lie algebra $\g{a}\oplus\g{n}$ is solvable and its derived Lie algebra is~$\g{n}$.
If $A$ and $N$ denote the connected subgroups of $G$ whose Lie algebras are $\g{a}$ and $\g{n}$, respectively, the map $K\times A\times N\to G$, $(k,a,n)\mapsto kan$, is a diffeomorphism. The corresponding decomposition $G=KAN$ is called the Iwasawa decomposition of~$G$.
The connected subgroup $AN$ whose Lie algebra is $\g{a}\oplus\g{n}$ is simply connected, solvable, and acts simply transitively on $M$.
Therefore, $M$ is isometric to the Lie group $AN$ endowed with a left-invariant Riemannian metric.
Moreover, the tangent space $T_o M$ can be identified with $\g{a}\oplus\g{n}$. The Lie exponential map $\Exp\colon\g{a}\oplus\g{n}\to AN$ is a diffeomorphism.

Since $AN$ acts simply transitively on $M$, we may endow $AN$ with a left invariant metric $\langle \cdot, \cdot \rangle_{AN}$ such that $M$ and $AN$ are isometric via the map $g\in AN \mapsto g\cdot o$.
The differential of this map at $e$ is precisely the orthogonal projection of $\g{a}\oplus\g{n}$ onto $\g{p}$, and the metric in $AN$ satisfies $\langle X,Y \rangle_{AN}=\langle X_{\g{a}},Y_{\g{a}}\rangle + \frac{1}{2}\langle X_{\g{n}},Y_{\g{n}} \rangle$ for every $X$, $Y\in\g{a}\oplus\g{n}$.
Furthermore, the Levi-Civita connection of $AN$ is given in terms of left invariant vector fields as $4\langle \nabla_{X}Y,Z  \rangle_{AN}=\langle [X,Y]+(1-\theta)[\theta X,Y], Z\rangle$, where $X$, $Y$, $Z\in\g{a}\oplus\g{n}$.

Note that for any $X\in\g{g}$ we have $2X=(1+\theta)X+(1-\theta)X$, where $(1+\theta)X\in\g{k}$ and $(1-\theta)X\in\g{p}$.
We use the following notation in what follows:
\begin{align*}
\g{k}_\lambda&{}=(1+\theta)\g{g}_\lambda=(1+\theta)\g{g}_{-\lambda}=\g{k}\cap(\g{g}_\lambda\oplus\g{g}_{-\lambda}),\\
\g{p}_\lambda&{}=(1-\theta)\g{g}_\lambda=(1-\theta)\g{g}_{-\lambda}=\g{p}\cap(\g{g}_\lambda\oplus\g{g}_{-\lambda}).
\end{align*}
Obviously $\g{k}_\lambda=\g{k}_{-\lambda}$, $\g{p}_\lambda=\g{p}_{-\lambda}$, and $\g{k}_\lambda\oplus\g{p}_\lambda=\g{g}_\lambda\oplus\g{g}_{-\lambda}$ for each $\lambda\in\Sigma^+$.
For $X$, $Y\in\g{g}_\lambda$, we also use the equality
$(1-\theta)[\theta X,Y]=2\langle X,Y\rangle H_\lambda$.

Let $\Lambda=\{\alpha_1,\dots,\alpha_r\}$ be the set of simple roots, and $\{H^1,\dots,H^r\}\subseteq\g{a}$ its dual basis, that is, $\alpha_i(H^j)=\delta_i^j$ is the Kronecker delta.
We define $H^\Lambda=\sum_{i=1}^r H^i$.
If $\lambda\in\Sigma$, then $\lambda=\sum_{i=1}^r c_i\alpha_i$, where the $c_i$ are integers and $c_i\geq 0$ for all $i\in\{1,\dots,r\}$ if $\lambda$ is positive, or $c_i\leq 0$ for all $i$ if $\lambda$ is negative.
The integer $\lambda(H^\Lambda)=\sum_{i=1}^r c_i$ is called the level of the root $\lambda$.
This determines a gradation of $\g{g}$ as
\[
\g{g}=\bigoplus_{k\in\mathbb{Z}}\g{g}^k,\text{ where }
\g{g}^k=\bigoplus_{\substack{\lambda\in\Sigma\\ \lambda(H^\Lambda)=k}}\g{g}_\lambda.
\]
Then, $\theta\g{g}^k=\g{g}^{-k}$, $k\in\mathbb{Z}$, and $\g{g}^0=\g{g}_0$.
According to~\cite{KaneyukiAsano88}, we have $\g{g}^{k+1}=[\g{g}^1,\g{g}^k]$, $\g{g}^{-k-1}=[\g{g}^{-1},\g{g}^{-k}]$, $k\geq 1$.
We also set $\g{n}^k=\g{g}^k$, $k\geq 1$. Thus, $\g{n}$ is generated by $\g{n}^1$.
We also define $\g{p}^k=\g{p}\cap(\g{g}^k\oplus\g{g}^{-k})$.
Finally, there is a highest root of the root space decomposition of $\g{g}$.
Let $m$ denote its level.
In fact we have $\g{g}=\bigoplus_{k=-m}^m\g{g}^k$, $\g{n}=\bigoplus_{k=1}^m\g{n}^k$, and $\g{p}=\g{a}\oplus\bigl(\bigoplus_{k=1}^m\g{p}^k\bigr)$.

We define the element 
\[ 
\delta=\frac{1}{2}\sum_{\lambda\in\Sigma^{+}}(\dim \g{g}_{\lambda})\lambda.
\]
If $s_{\alpha}\colon \g{a}^{*}\to\g{a}^{*}$ is the root reflection with respect to  a simple root $\alpha\in \Lambda$, it follows from~\cite[Theorem 6.57]{Knapp} that $s_{\alpha}$ is induced by an element of $N_{K}(\g{a})$, the normalizer of $\g{a}$ in $K$.
In particular, $\dim \g{g}_{s_{\alpha}(\lambda)}=\dim \g{g}_{\lambda}$ for every $\lambda\in\Sigma$.
On the other hand, by~\cite[Lemma 2.61]{Knapp} $s_{\alpha}$ permutes all positive roots linearly independent from $\alpha$, while sending $\alpha$ to $-\alpha$.
As a consequence, $s_{\alpha}(\delta)=\delta-(\dim \g{g}_{\alpha})\alpha-2(\dim \g{g}_{2\alpha})\alpha$.
Taking the inner product with $\alpha$ yields
\begin{equation}\label{eq:delta}
	2\langle \delta,\alpha \rangle=\lvert\alpha\rvert^{2}(\dim \g{g}_{\alpha}+2\dim \g{g}_{2\alpha}).
\end{equation}

\subsection{Parabolic subalgebras and canonical extensions}\label{sec:parabolic}\hfill

Let $\Phi$ be a subset of simple roots. 
We denote by $\Sigma_\Phi$ the subset of $\Sigma$ spanned by $\Phi$, and by $\Sigma_\Phi^+$ the corresponding set of positive roots inside $\Sigma_\Phi$.
Then $\Sigma_\Phi$ is a root subsystem of $\Sigma$ and $\Phi$ is a simple system for $\Sigma_\Phi$.
We define
\begin{align*}
\g{a}_{\Phi}&{}=\bigcap_{\alpha\in\Phi}\ker\alpha,&
\g{l}_{\Phi}&{}=\g{g}_{0}\oplus\bigoplus_{\lambda\in\Sigma_{\Phi}}\g{g}_{\lambda},&
\g{n}_{\Phi}&{}=\bigoplus_{\lambda\in\Sigma^{+}\setminus\Sigma_{\Phi}^{+}}\g{g}_{\lambda}.
\end{align*}
Then, $\g{a}_\Phi$ is abelian, $\g{l}_\Phi$ is reductive, and $\g{n}_\Phi$ is nilpotent.
Moreover, $[\g{l}_\Phi,\g{n}_\Phi]\subseteq\g{n}_\Phi$.

By definition, $\g{q}_\Phi=\g{l}_\Phi\oplus\g{n}_\Phi$ is called the parabolic subalgebra of $\g{g}$ determined by $\Phi$~\cite{BorelJi2006}.
Let $\g{m}_\Phi=\g{l}_\Phi\ominus\g{a}_\Phi$ and $\g{g}_\Phi=[\g{l}_\Phi,\g{l}_\Phi]=[\g{m}_\Phi,\g{m}_\Phi]$.
It is known that $\g{g}_\Phi$ is semisimple, and that $\g{m}_\Phi$ normalizes $\g{a}_\Phi\oplus\g{n}_\Phi$.
The decomposition $\g{q}_\Phi=\g{m}_\Phi\oplus\g{a}_\Phi\oplus\g{n}_\Phi$ is called the Langlands decomposition of $\g{q}_\Phi$.

We denote by $Q_\Phi$, $M_\Phi$, $G_\Phi$, $A_\Phi$, and $N_\Phi$ the connected subgroups of $G$ whose Lie algebras are $\g{q}_\Phi$, $\g{m}_\Phi$, $\g{g}_\Phi$, $\g{a}_\Phi$, and $\g{n}_\Phi$, respectively.
The subgroup $Q_\Phi$ is the parabolic subgroup of $G$ associated with $\Phi$, and $Q_\Phi=M_\Phi A_\Phi N_\Phi$ is the corresponding Langlands decomposition of $Q_\Phi$ at Lie group level.

We define $B_\Phi=M_\Phi\cdot o=G_\Phi\cdot o$.
Then, $B_\Phi$ is a totally geodesic submanifold of $M$, and since $G_\Phi$ is semisimple, it is a symmetric space of noncompact type and of rank $\lvert\Phi\rvert$.
This submanifold is called the boundary component associated with $\Phi$.

Let $H_\Phi$ be a subgroup of the isometry group of $B_\Phi$.
Since $G_\Phi$ contains the isometry group of $B_\Phi$, it follows that $H_\Phi\subseteq G_\Phi\subseteq M_\Phi$.
As $M_\Phi$ normalizes $A_\Phi N_\Phi$, $H=H_\Phi A_\Phi N_\Phi$ is a subgroup of $G$ acting isometrically on $M$, and its Lie algebra is $\g{h}_{\Phi}\oplus\g{a}_{\Phi}\oplus\g{n}_{\Phi}$.
The action of $H$ on $M$ is called the canonical extension to $M$ of the action of $H_\Phi$ on $B_\Phi$~\cite{BerndtDiazTamaru10}.

\subsection{Maximal solvable subalgebras}\label{sec:Borel}\hfill

We say that a subalgebra $\g{b}\subseteq \g{g}$ is a Borel subalgebra if it is a maximal solvable subalgebra of $\g{g}$.
On the other hand, a subalgebra $\g{h}\subseteq \g{g}$ is a Cartan subalgebra if its complexification $\g{h}\otimes \mathbb{C}$ is a Cartan subalgebra of the complex semisimple Lie algebra $\g{g}\otimes\mathbb{C}$.
In particular, $\g{h}$ is abelian~\cite[Proposition~2.10]{Knapp}.
Note, however, that Cartan subalgebras of real semisimple Lie algebras are not necessarily conjugate.

Any Cartan subalgebra $\g{h}$ of $\g{g}$ is conjugate to a $\theta$-stable subalgebra~\cite[Proposition~6.59]{Knapp}.
Thus, we can assume that $\theta\g{h}=\g{h}$, which means that $\g{h}$ splits as a direct sum $\tilde{\g{t}}\oplus\tilde{\g{a}}$, where $\tilde{\g{t}}\subseteq \g{k}$ and $\tilde{\g{a}}\subseteq\g{p}$.
Both $\tilde{\g{t}}$ and $\tilde{\g{a}}$ are abelian subspaces of $\g{g}$.
In this case, $\dim\tilde{\g{t}}$ is called the compact dimension of $\g{h}$, and $\dim\tilde{\g{a}}$ is called the noncompact dimension of $\g{h}$.
The subalgebra $\tilde{\g{t}}$ is called the torus part of $\g{h}$, and $\tilde{\g{a}}$ is called the vector part of $\g{h}$.
We have that $\tilde{\g{a}}$ induces a root space decomposition on~$\g{g}$.
Note that, in principle, $\tilde{\g{a}}$ does not have to be a maximal abelian subspace of $\g{p}$ in this case.
Root spaces are defined analogously: for each $\tilde{\lambda}\in\tilde{\g{a}}^{*}$, let
\[
\tilde{\g{g}}_{\tilde{\lambda}}=\{ X\in\g{g}\colon \ad(H)X=\tilde{\lambda}(H)X \text{ for all $H\in\tilde{\g{a}}$} \},
\]
and define $\tilde{\Sigma}$ to be the set of all $\tilde{\lambda}\in\tilde{\g{a}}^{*}$ such that $\tilde{\lambda}\neq 0$ and $\tilde{\g{g}}_{\tilde{\lambda}}\neq 0$.
Since the family $\ad(\tilde{\g{a}})$ consists again of commuting self-adjoint endomorphisms, it follows that $\g{g}=\tilde{\g{g}}_{0}\oplus\bigl(\bigoplus_{\tilde{\lambda}\in\tilde{\Sigma}}\tilde{\g{g}}_{\tilde{\lambda}}\bigr)$.
Observe that $\tilde{\g{t}}\subseteq \tilde{\g{g}}_{0}\cap\g{k}$ since $\g{h}$ is abelian.
	
We now relate the previous decomposition to the root space decomposition induced by a maximal abelian subspace $\g{a}\subseteq \g{p}$ containing $\tilde{\g{a}}$.
Let $\Sigma'\subseteq \Sigma$ be the subset of roots that annihilate $\tilde{\g{a}}$. We then have the following equalities:

\begin{align*}
\tilde{\g{a}}&{}=\bigcap_{\lambda\in\Sigma'}\ker\lambda,& \tilde{\g{g}}_{0}&{}=\g{g}_{0}\oplus\Bigl(\bigoplus_{\lambda\in\Sigma'}\g{g}_{\lambda}\Bigr),& \tilde{\g{g}}_{\lambda}&{}=\bigoplus_{\substack{\lambda \in \Sigma\\ \lambda\vert_{\tilde{\g{a}}}=\tilde{\lambda}}}\g{g}_{\lambda}.
\end{align*}
Since $\Sigma'$ is an abstract root system in $(\g{a}\ominus\tilde{\g{a}})^{*}$, we may give a notion of positivity on $\Sigma$ that is compatible with that of $\Sigma'$ and $\tilde{\Sigma}$, that is, $\lambda\in\Sigma$ is positive if and only if $\lambda\in \Sigma'$ is positive or $\lambda\in\Sigma\setminus\Sigma'$ and $\lambda\vert_{\tilde{\g{a}}}\in\tilde{\Sigma}^{+}$.

\begin{remark}
For the sake of completeness we explain here how these notions of positivity can be made compatible.
One can define a notion of positivity on $\tilde{\Sigma}$ by fixing a regular element $\tilde{H}_{0}\in\tilde{\g{a}}$ (that is, $\tilde{\lambda}(\tilde{H}_{0})\neq 0$ for all $\tilde{\lambda}\in\tilde{\Sigma}$) and declaring $\tilde{\lambda}\in\tilde{\Sigma}$ to be positive if $\tilde{\lambda}(\tilde{H}_{0})>0$.
We also take a regular element $H'\in\g{a}\ominus\tilde{\g{a}}$ and define $\lambda\in\Sigma'$ to be positive whenever $\lambda(H')>0$. We now define $H_{0}=\tilde{H}_{0}+\varepsilon H'$, where $\varepsilon$ is a positive constant. Note that for every $\lambda\in \Sigma'$, $\lambda(H_{0})=\varepsilon \lambda(H')$, so $\lambda(H_{0})$ and $\lambda(H')$ have the same sign. Furthermore, if $\lambda\in\Sigma\setminus\Sigma'$, we have $\lambda(H_{0})=\lambda\vert_{\tilde{\g{a}}}(\tilde{H}_{0})+\varepsilon\lambda(H')$. Since the set of roots is finite, we can choose $\varepsilon>0$ sufficiently small so that $\lambda(H_{0})$ and $\lambda\vert_{\tilde{\g{a}}}(\tilde{H}_{0})$ have the same sign for all $\lambda\in\Sigma\setminus\Sigma'$.
\end{remark}

By~\cite[Theorem 4.1]{Mostow}, any Borel subalgebra $\g{b}$ of $\g{g}$ is of the form $\g{b}=\tilde{\g{t}}\oplus\tilde{\g{a}}\oplus\tilde{\g{n}}$ for an adequate choice of a Cartan subalgebra $\g{h}=\tilde{\g{t}}\oplus\tilde{\g{a}}$, a set of positive elements $\tilde{\Sigma}^{+}\subseteq\tilde{\Sigma}$, and where $\tilde{\g{n}}=\bigoplus_{\tilde{\lambda}\in\tilde{\Sigma}^{+}}\tilde{\g{g}}_{\tilde{\lambda}}$. We aim to restate this description of $\g{b}$ directly in terms of the root system induced by $\g{a}$.
	
We consider the subset $\Phi\subseteq \Sigma'$ of simple roots associated with the positivity criterion in $\Sigma'$. Note that $\Phi\subseteq \Lambda$. Indeed, by the construction of our set of positive roots in $\Sigma$, we have $\Phi\subseteq \Sigma^{+}$. Suppose $\alpha\in\Phi$ is not simple, so that $\alpha=\beta+\gamma$ for two positive roots $\beta$,~$\gamma\in\Sigma^{+}$. Since $\alpha$ is simple in $\Sigma'$, we have that $\beta$ and $\gamma$ cannot be simultaneously in $\Sigma'$, and combining this with the equation $0=\alpha(H_{0})=\beta(H_{0})+\gamma(H_{0})$, we deduce that neither $\beta$ nor $\gamma$ are in $\Sigma'$, and either $\beta$ or $\gamma$ is negative, a contradiction.
	
To summarize, we have found a subset $\Phi\subseteq\Lambda$ of simple roots for which $\Sigma'=\Sigma_{\Phi}$ is the root system generated by $\Phi$ and the following identities hold: $\tilde{\g{a}}=\g{a}_{\Phi}$, $\tilde{\g{g}}_{0}=\g{l}_{\Phi}$ and $\tilde{\g{n}}=\g{n}_{\Phi}$.
	
We have thus arrived at the following result.
	
\begin{theorem}\label{thm:MostowSimpleRoots}
Let $\g{g}$ be a real semisimple Lie algebra and $\g{b}$ a Borel subalgebra of~$\g{g}$.
Then $\g{b}$ contains a Cartan subalgebra $\g{h}$. Furthermore, there exists a Cartan decomposition $\g{g}=\g{k}\oplus\g{p}$, a maximal abelian subspace $\g{a}\subseteq\g{p}$, a choice of simple roots $\Lambda\subseteq \Sigma$, and a set $\Phi\subseteq\Lambda$ such that $\g{b}=\tilde{\g{t}}\oplus\g{a}_{\Phi}\oplus\g{n}_{\Phi}$, where $\tilde{\g{t}}$ is an abelian subspace of $\g{k}_{\Phi}=\g{k}\cap\g{l}_{\Phi}=\g{k}_{0}\oplus\bigl(\bigoplus_{\lambda\in\Sigma_{\Phi}^{+}}\g{k}_{\lambda}\bigr)$.
\end{theorem}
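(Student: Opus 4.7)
The plan is to assemble the theorem directly from the preparatory material just given: Mostow's description of Borel subalgebras furnishes the Cartan subalgebra and a ``raw'' root-space presentation of $\g{b}$ in terms of some maximal abelian $\tilde{\g{a}}\subseteq\g{p}$, and then the compatibility discussion that precedes the statement translates this presentation into the language of simple roots and parabolic subalgebras with respect to a genuine maximal abelian subspace $\g{a}\supseteq\tilde{\g{a}}$ of $\g{p}$.

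First I would invoke Mostow's theorem (\cite[Theorem 4.1]{Mostow}) to produce a $\theta$-stable Cartan subalgebra $\g{h}=\tilde{\g{t}}\oplus\tilde{\g{a}}$ contained in $\g{b}$, a positivity criterion on the root system $\tilde{\Sigma}$ induced by $\tilde{\g{a}}$, and the decomposition $\g{b}=\tilde{\g{t}}\oplus\tilde{\g{a}}\oplus\tilde{\g{n}}$ with $\tilde{\g{n}}=\bigoplus_{\tilde{\lambda}\in\tilde{\Sigma}^+}\tilde{\g{g}}_{\tilde{\lambda}}$. Next I would extend $\tilde{\g{a}}$ to a maximal abelian subspace $\g{a}\subseteq\g{p}$ (which is possible because $\tilde{\g{a}}\subseteq\g{p}$ is already abelian) and consider the associated root system $\Sigma$. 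The subset $\Sigma'\subseteq\Sigma$ of roots vanishing on $\tilde{\g{a}}$ is an abstract root system in $(\g{a}\ominus\tilde{\g{a}})^*$, and one has the identifications
\[
\tilde{\g{a}}=\bigcap_{\lambda\in\Sigma'}\ker\lambda,\qquad
\tilde{\g{g}}_0=\g{g}_0\oplus\Bigl(\bigoplus_{\lambda\in\Sigma'}\g{g}_\lambda\Bigr),\qquad
\tilde{\g{g}}_{\tilde{\lambda}}=\bigoplus_{\lambda|_{\tilde{\g{a}}}=\tilde{\lambda}}\g{g}_\lambda,
\]
that were already recorded.

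Now I would apply the compatibility construction of the preceding remark to fix a positive system $\Sigma^+\subseteq\Sigma$ such that a root is positive exactly when either it lies in $\Sigma'^+$ or its restriction to $\tilde{\g{a}}$ is positive in $\tilde{\Sigma}^+$. Let $\Lambda$ denote the corresponding simple system, and let $\Phi\subseteq\Sigma'^+$ be the simple system of $\Sigma'$. The key point, argued in the discussion above the theorem, is that $\Phi\subseteq\Lambda$: otherwise some $\alpha\in\Phi$ decomposes as $\alpha=\beta+\gamma$ in $\Sigma^+$ with neither summand in $\Sigma'$ (since $\alpha$ is simple in $\Sigma'$), and then the relation $0=\alpha(H_0)=\beta(H_0)+\gamma(H_0)$ forces $\beta$ or $\gamma$ to be negative, contradicting positivity. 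Given $\Phi\subseteq\Lambda$, the construction of $\Sigma^+$ immediately yields $\Sigma'=\Sigma_\Phi$, and from the previous display we read off $\tilde{\g{a}}=\g{a}_\Phi$, $\tilde{\g{g}}_0=\g{l}_\Phi$, and $\tilde{\g{n}}=\bigoplus_{\lambda\in\Sigma^+\setminus\Sigma_\Phi^+}\g{g}_\lambda=\g{n}_\Phi$.

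Finally, since $\g{h}$ is abelian and contained in $\tilde{\g{g}}_0=\g{l}_\Phi$, in particular $\tilde{\g{t}}\subseteq\g{h}\cap\g{k}\subseteq\g{l}_\Phi\cap\g{k}=\g{k}_\Phi$; the explicit decomposition $\g{k}_\Phi=\g{k}_0\oplus\bigoplus_{\lambda\in\Sigma_\Phi^+}\g{k}_\lambda$ then follows from the root space decomposition of $\g{l}_\Phi$ restricted to $\g{k}$, using $\g{k}\cap(\g{g}_\lambda\oplus\g{g}_{-\lambda})=\g{k}_\lambda$. The only delicate step is the choice of a common regular element that makes both positivity notions compatible, but this is precisely the content of the remark preceding the theorem, so no new work is required here.
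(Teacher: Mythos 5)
Your proposal is correct and follows essentially the paper's own route: invoke Mostow's theorem to write $\g{b}=\tilde{\g{t}}\oplus\tilde{\g{a}}\oplus\tilde{\g{n}}$ for a $\theta$-stable Cartan subalgebra, extend $\tilde{\g{a}}$ to a maximal abelian $\g{a}\subseteq\g{p}$, impose the compatible positivity via a regular element, prove $\Phi\subseteq\Lambda$ by the same decomposition argument, and then read off $\tilde{\g{a}}=\g{a}_\Phi$, $\tilde{\g{g}}_0=\g{l}_\Phi$, $\tilde{\g{n}}=\g{n}_\Phi$ and $\tilde{\g{t}}\subseteq\g{k}_\Phi$. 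The only point worth tightening (a slip the paper's exposition shares) is the step ``neither summand lies in $\Sigma'$'': simplicity of $\alpha$ in $\Sigma'$ only excludes $\beta,\gamma\in\Sigma'$ simultaneously, and ruling out exactly one summand in $\Sigma'$ uses the regularity of $\tilde{H}_0$ on $\tilde{\Sigma}$, with the relation $0=\alpha(\tilde{H}_0)=\beta(\tilde{H}_0)+\gamma(\tilde{H}_0)$ evaluated at $\tilde{H}_0$ (on which $\alpha\in\Sigma'$ vanishes) rather than at $H_0$.
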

	
We say that a Cartan subalgebra $\g{h}$ (resp.\ Borel subalgebra~$\g{b}$) is maximally compact if its compact dimension is maximal, and maximally noncompact if its noncompact dimension is maximal.
Since $\tilde{\g{a}}$ is abelian in $\g{p}$, we have that $\g{h}$ (resp.~$\g{b}$) is maximally noncompact if and only if $\tilde{\g{a}}$ is a maximal abelian subspace of $\g{p}$ \cite[Proposition~6.47]{Knapp}. 

If a Borel subalgebra corresponds to a maximally noncompact Cartan subalgebra, then
$\Phi=\emptyset$ is the empty set, $\tilde{\g{a}}={\g{a}_\emptyset}=\g{a}$ is a maximal abelian subspace of $\g{p}$,
and $\g{t}=\tilde{\g{t}}$ is a maximal abelian subspace of $\g{k}_0$~\cite[Proposition~6.47 and~Lemma~6.62]{Knapp}.
This implies ${\g{n}_\emptyset}=\g{n}$, and thus, $\g{b}=\g{t}\oplus\g{a}\oplus\g{n}$.

\subsection{Polar actions}\hfill

Let $M$ be a Riemannian manifold and $H$ a connected Lie group acting on $M$ isometrically. Given any point $p\in M$, we denote by $H_{p}$ the isotropy subgroup at $p$, and by $H\cdot p$ the $H$-orbit through $p$.
We assume that the action of $H$ on $M$ is proper, that is, the map $H\times M\to M\times M$, $(g,p)\mapsto (p,g\cdot p)$ is proper.
In this case, isotropy groups are compact, the orbit space is Hausdorff, and the orbits of the action are closed and embedded submanifolds.
If the action is also effective, that is, the only element of the group acting trivially is the identity element, then $H$ can be assumed to be a closed subgroup of the isometry group of $M$ acting on $M$ in the natural way.
If $p\in M$ is any point, we define the isotropy representation of $H$ at $p$ as the map $g\in H_{p}\mapsto g_{*p}\in \mathsf{O}(T_{p}M)$, and the slice representation of $H$ at $p$ as its restriction to $\mathsf{O}(\nu_{p}(H\cdot p))$. Here, the notation $\nu_{p}N$ refers to the normal subspace of the submanifold $N\subseteq M$ at $p$.

Two isometric actions on $M$ are said to be orbit equivalent if there is an isometry of $M$ that maps the orbits of one action to the orbits of the other action.
They are said to be conjugate if there is an isometry of $M$ that is equivariant with respect to both actions.

We start by mentioning the following result, which is a refinement of~\cite[Lemma~2.5]{DiazDominguezKollross20}.

\begin{lemma}\label{lemma:OrbitEquivalenceNonClosed}
Let $M$ be a complete Riemannian manifold and $H$ and $\tilde{H}$ be connected, not necessarily closed, subgroups of the isometry group of $M$ such that $H\subseteq \tilde{H}$.
Suppose that there exists $o\in M$ such that $H\cdot o=\tilde{H}\cdot o$ is a closed subset of $M$, and the slice representation of $\tilde{H}$ at $o$ is trivial.
Then $H$ and $\tilde{H}$ act with the same orbits.
\end{lemma}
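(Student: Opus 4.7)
The strategy hinges on the identity $\tilde{\g{h}}=\g{h}+\tilde{\g{h}}_o$, which is a direct consequence of $H\cdot o=\tilde H\cdot o$: equal orbits mean equal images of the evaluation maps $\g{h}\to T_oM$ and $\tilde{\g{h}}\to T_oM$, and the equality follows by pulling back. Every $X\in\tilde{\g{h}}$ can thus be decomposed as $X=Y+W$ with $Y\in\g{h}$ and $W\in\tilde{\g{h}}_o$. The plan is then to exploit the triviality of the slice representation to show that the Killing field $W^*$ induced by each $W\in\tilde{\g{h}}_o$ vanishes on a normal slice at $o$; this reduces orbit equality to an infinitesimal statement that propagates by $H$-equivariance and a connectedness argument.

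The first ingredient is the slice-vanishing lemma. For $W\in\tilde{\g{h}}_o$ one has $W^*_o=0$, and the trivial-slice hypothesis translates to $(\nabla_v W^*)_o=0$ for every $v\in\nu_o(\tilde H\cdot o)$. Since the restriction of a Killing field to a geodesic is a Jacobi field uniquely determined by its value and covariant derivative at a single point, these initial conditions force $W^*$ to vanish along every normal geodesic $\gamma_v(t)=\Exp_o(tv)$; by completeness of $M$, each $\gamma_v$ is defined for all $t$, so $W^*$ vanishes identically on the slice $\Sigma=\Exp_o(\nu_o(\tilde H\cdot o))$.

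Combining this with the decomposition $X=Y+W$ gives $X^*_s=Y^*_s\in T_s(H\cdot s)$ for every $s\in\Sigma$, hence $T_s(\tilde H\cdot s)=T_s(H\cdot s)$. The set $U=H\cdot\Sigma$ is an open neighborhood of $H\cdot o$ (the map $H\times\Sigma\to M$, $(h,s)\mapsto h\cdot s$, is a submersion at $(e,o)$ because $T_o(H\cdot o)$ and $T_o\Sigma$ together span $T_oM$), and $H$-equivariance---using $\tilde H\cdot(h\cdot s)=h\cdot(\tilde H\cdot s)$, which holds since $h^{-1}\tilde H h=\tilde H$ for $h\in H\subseteq\tilde H$---propagates the tangential equality to $T_p(\tilde H\cdot p)=T_p(H\cdot p)$ for every $p\in U$. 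A connectedness argument on each $\tilde H$-orbit then upgrades this to the actual equality $H\cdot p=\tilde H\cdot p$ on $U$: $H\cdot p$ is open in $\tilde H\cdot p$ by the dimension match, and the remaining $H$-orbits inside $\tilde H\cdot p\cap U$ are likewise open, so the partition of $\tilde H\cdot p$ into open $H$-orbit pieces, together with connectedness of $\tilde H\cdot p$, forces a single $H$-orbit.

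Finally, the set $X=\{p\in M:H\cdot p=\tilde H\cdot p\}$ is $\tilde H$-invariant, and since it contains $U$ it also contains the saturation $\tilde H\cdot U$: for $p\in\tilde H\cdot U$, picking $q\in\tilde H\cdot p\cap U$ gives $\tilde H\cdot p=\tilde H\cdot q=H\cdot q\ni p$, whence $H\cdot p=H\cdot q=\tilde H\cdot p$. The main obstacle I expect is showing that $\tilde H\cdot U$ exhausts $M$: the local analysis near $o$ produces the tube $U$ and its $\tilde H$-saturation, but proving that every $\tilde H$-orbit meets $U$ appears to require a genuinely global argument combining the closedness of $H\cdot o$ with the completeness of $M$; this is the step where I expect the subtlety of the ``not necessarily closed'' refinement over \cite{DiazDominguezKollross20} to enter.
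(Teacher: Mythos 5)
Your infinitesimal analysis near $o$ is sound: $\tilde{\g{h}}=\g{h}+\tilde{\g{h}}_o$ follows from the equal orbits, and the Jacobi-field argument correctly shows that each $W\in\tilde{\g{h}}_o$ induces a Killing field vanishing on $\Sigma=\exp_o(\nu_o(\tilde{H}\cdot o))$, giving $T_s(\tilde H\cdot s)=T_s(H\cdot s)$ for $s\in\Sigma$. But the obstacle you flag at the end is exactly the missing idea, and it is not a subtle global difficulty: it is the opening move of the paper's proof. Given an \emph{arbitrary} $p\in M$, closedness of $H\cdot o$ together with completeness of $M$ yields a point $q\in H\cdot o$ at minimal distance from $p$, and the first variation formula forces the minimizing geodesic from $q$ to $p$ to leave the orbit orthogonally; translating $q$ to $o$ by an element of $H$ gives $p=\exp_o(\xi_o)$ with $\xi_o\in\nu_o(H\cdot o)$. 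In other words $M=H\cdot\Sigma$, so your tube $U$ is all of $M$ and no saturation argument is needed. Without this step your proof only establishes the conclusion on a neighborhood of $H\cdot o$, and the hypotheses of closedness and completeness --- which are precisely what the lemma adds --- are never actually used.

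A secondary point: the connectedness upgrade, as written, does not quite close. To conclude $H\cdot p=\tilde H\cdot p$ from ``open partition plus connectedness'' you need \emph{every} $H$-orbit contained in $\tilde H\cdot p$ to be open in it, but your tangential equality is only available along $H$-orbits meeting $\Sigma$, and a priori $\tilde H\cdot p$ need not be contained in $U$. Once $M=H\cdot\Sigma$ is established this objection disappears (every $H$-orbit meets $\Sigma$), so fixing the first gap also repairs this step. Note that the paper avoids the connectedness argument altogether: since $\xi_o$ is fixed by the slice representation of $\tilde H$, it extends to the unique $\tilde H$-equivariant (hence also $H$-equivariant) normal field $\xi$ along $H\cdot o=\tilde H\cdot o$, and by \cite[Section 2.1.8]{BerndtConsoleOlmos} both $H\cdot p$ and $\tilde H\cdot p$ coincide with $\{\exp_x(\xi_x):x\in H\cdot o\}$, which gives the equality of orbits in one stroke; your Killing/Jacobi computation is essentially the infinitesimal shadow of this equivariant-field argument.
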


\begin{proof}
Let $p\in M$ be arbitrary. Since $H\cdot o$ is closed in $M$, we may find a point $q\in H\cdot o$ such that the distance from $q$ to $p$ is minimum among all points of $H\cdot o$. The first variation formula implies that the minimizing geodesic joining $q$ and $p$ must leave $H\cdot o$ perpendicularly. Thus, by homogeneity we may assume that $q=o$ and $p=\exp_{o}(\xi_{o})$, with $\xi_{o}\in\nu_{o}(H\cdot o)$. Let $\xi\in \Gamma(\nu(H\cdot o))$ be the unique $\tilde{H}$-equivariant vector field whose value at $o$ is $\xi_{o}$ (which exists because $\xi_{o}$ is fixed by the slice representation of $\tilde{H}$). Since $H\subseteq \tilde{H}$, $\xi$ is also the unique $H$-equivariant normal vector field along $H\cdot o$ generated by $\xi_{o}$. Due to~\cite[Section 2.1.8]{BerndtConsoleOlmos}, we obtain $H\cdot p = \{ \exp_{x}(\xi_{x}) \colon x\in H\cdot o \}=\tilde{H}\cdot p$.
\end{proof}

If $H$ is a closed subgroup of the isometry group of $M$, we say that the action of $H$ on $M$ is polar if there exists a submanifold $\Sigma$ in $M$ such that:
\begin{enumerate}[\rm (i)]
\item $\Sigma$ intersects all the orbits of $H$, and
\item if $p\in\Sigma$, then $T_p\Sigma$ and $T_p(H\cdot p)$ are orthogonal.
\end{enumerate}
The submanifold $\Sigma$ is called a section.
Sections are known to be totally geodesic (see for example~\cite{LorenzoSolonenko}).
If the section of a polar action is flat, then the action is called hyperpolar.

In this paper we assume that the action of $H$ on $M$ induces a foliation, that is, the action does not have singular orbits.
We say that $M$ is a Hadamard manifold if it is a simply connected complete Riemannian manifold with nonpositive sectional curvature.
Riemannian symmetric spaces of noncompact type are examples of Hadamard manifolds.
If follows from~\cite[Proposition~2.1]{BerndtDiazTamaru10} that all the orbits of $H$ are principal, that is, all isotropy groups are conjugate in $H$.

From~\cite[Theorem~4.1]{BerndtDiazTamaru10} we have the following criterion of polarity:

\begin{proposition}\label{th:criterion}
Let $M=G/K$ be a Riemannian symmetric space of noncompact type with Cartan decomposition $\g{g}=\g{k}\oplus\g{p}$.
Let $H\subseteq G$ be a closed subgroup acting on $M$ in such a way that its orbits form a foliation on $M$.
We define
\[
\g{h}_\g{p}^\perp=\{\xi\in\g{p}:\langle\xi,X\rangle=0,\text{ for all $X\in\g{h}$}\}.
\]
Then:
\begin{enumerate}[\rm (i)]
\item The group $H$ acts polarly on $M$ if and only if $\g{h}_\g{p}^\perp$ is a Lie triple system in $\g{p}$ and $[\g{h}_\g{p}^\perp,\g{h}_\g{p}^\perp]$ is orthogonal to $\g{h}$.
\item The group $H$ acts hyperpolarly on $M$ if and only if $\g{h}_\g{p}^\perp$ is an abelian subspace of $\g{p}$.
\end{enumerate}
Moreover, if $H_\g{p}^\perp$ denotes the subgroup of $G$ whose Lie algebra is $[\g{h}_\g{p}^\perp,\g{h}_\g{p}^\perp]\oplus\g{h}_{\g{p}}^{\perp}$, then $H_\g{p}^\perp\cdot o$ is a section of the action of $H$ on $M$.
\end{proposition}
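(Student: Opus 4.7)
The plan is to translate the polarity conditions on the $H$-action into algebraic ones on $\g{h}_\g{p}^\perp\subseteq\g{p}$. Under the identification $T_oM\cong\g{p}$, the Killing field $X^*$ induced by $X\in\g{g}$ satisfies $X^*_o=X_\g{p}$, so that $T_o(H\cdot o)=\g{h}_\g{p}$ and $\nu_o(H\cdot o)=\g{h}_\g{p}^\perp$. Since $M$ is Hadamard the foliation consists only of principal orbits, and any hypothetical section can be translated by $H$ so as to pass through $o$.

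For the forward direction of (i), recall that sections are totally geodesic (see~\cite{LorenzoSolonenko}), and that totally geodesic submanifolds of a symmetric space through $o$ are in bijection with Lie triple systems in $\g{p}$; hence $\g{h}_\g{p}^\perp$ is a Lie triple system. Setting $\g{m}:=\g{h}_\g{p}^\perp\oplus[\g{h}_\g{p}^\perp,\g{h}_\g{p}^\perp]$, the corresponding section is $\Sigma=H_\g{p}^\perp\cdot o$. For $\xi\in\g{h}_\g{p}^\perp$, the Killing-field identity $Y^*_{\Exp(t\xi)\cdot o}=\Exp(t\xi)_{*o}(\Ad(\Exp(-t\xi))Y)_\g{p}$ together with the isometry property of $\Exp(t\xi)$ reduces orthogonality along the curve $t\mapsto\Exp(t\xi)\cdot o$ to the vanishing of $\varphi(t):=\langle(e^{-t\ad(\xi)}X)_\g{p},(e^{-t\ad(\xi)}Y)_\g{p}\rangle$ for all $t$, all $X\in\g{h}$ and all $Y\in\g{m}$. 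Since $X_\g{p}\in\g{h}_\g{p}$ is orthogonal to $Y_\g{p}\in\g{h}_\g{p}^\perp$, $\varphi(0)=0$ automatically; differentiating at $t=0$ and using that $\ad(\xi)$ is self-adjoint and exchanges $\g{k}$ with $\g{p}$, one extracts $\langle X_\g{k},[\xi,Y]\rangle=0$ for all $X\in\g{h}$ and $\xi,Y\in\g{h}_\g{p}^\perp$, which is precisely $[\g{h}_\g{p}^\perp,\g{h}_\g{p}^\perp]\perp\g{h}$.

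For the converse, assume both conditions and set $\Sigma=H_\g{p}^\perp\cdot o$, totally geodesic with $T_o\Sigma=\g{h}_\g{p}^\perp$. The Hadamard property produces, for each $p\in M$, a minimizing geodesic from $p$ to the closed orbit $H\cdot o$ leaving perpendicularly; translating by $H$ yields $p\in\exp_o(\g{h}_\g{p}^\perp)\subseteq\Sigma$, so $\Sigma$ meets every orbit. For orthogonality at a general $p=\Exp(\xi)\cdot o$, expand $(e^{-\ad(\xi)}X)_\g{p}$ in powers of $\ad(\xi)$: the Lie triple system property gives $\ad(\xi)^{2k}\eta\in\g{h}_\g{p}^\perp$ and $\ad(\xi)^{2k+1}\eta\in[\g{h}_\g{p}^\perp,\g{h}_\g{p}^\perp]$ for $\eta\in\g{h}_\g{p}^\perp$. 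Pairing with such $\eta$ via self-adjointness of $\ad(\xi)$, the even-parity contributions pair $X_\g{p}\in\g{h}_\g{p}$ against an element of $\g{h}_\g{p}^\perp$ and vanish by definition of $\g{h}_\g{p}^\perp$, while the odd-parity contributions pair $X_\g{k}$ against an element of $[\g{h}_\g{p}^\perp,\g{h}_\g{p}^\perp]$, which vanishes because $X_\g{k}=X-X_\g{p}$ with $X\in\g{h}$ and $X_\g{p}\in\g{p}$, so $[\g{h}_\g{p}^\perp,\g{h}_\g{p}^\perp]\perp\g{h}$ forces $\langle X_\g{k},Z\rangle=0$ for every $Z\in[\g{h}_\g{p}^\perp,\g{h}_\g{p}^\perp]$. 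Part (ii) is then immediate: flat totally geodesic submanifolds correspond to abelian Lie triple systems, and abelianness $[\g{h}_\g{p}^\perp,\g{h}_\g{p}^\perp]=0$ trivially implies both conditions of (i).

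The main obstacle is verifying orthogonality at every point of $\Sigma$ in the converse direction. A single-step first-variation argument, as in (i), only produces the bracket condition; the full pointwise orthogonality requires controlling infinitely many terms in the expansion of $e^{-\ad(\xi)}$, where the interplay between the Lie triple system property (dictating which subspace among $\g{h}_\g{p}^\perp$, $[\g{h}_\g{p}^\perp,\g{h}_\g{p}^\perp]$, $\g{h}_\g{p}$, $\g{h}_\g{k}$ each iterated bracket lands in) and the bracket hypothesis (killing the $\g{h}$-projection at each odd order) is the crux of the argument.
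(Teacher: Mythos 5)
Your proof is correct, but note that the paper itself contains no proof of Proposition~\ref{th:criterion}: it is quoted from \cite[Theorem~4.1]{BerndtDiazTamaru10} (with the polarity criterion credited to Gorodski \cite{Gorodski04}), so there is no internal argument to compare against. What you wrote is essentially the standard proof behind that citation. The necessity direction is right: after translating a section through $o$, total geodesy of sections plus the fact that $o$ lies on a principal orbit (so the section has dimension $\dim\g{h}_\g{p}^\perp$ and tangent space exactly $\nu_o(H\cdot o)=\g{h}_\g{p}^\perp$) gives the Lie triple system condition, and the vanishing of the first-order term of $t\mapsto\langle e^{-t\ad(\xi)}X,e^{-t\ad(\xi)}\eta\rangle$, using $\ad(\xi)^*=\ad(\xi)$, gives $[\g{h}_\g{p}^\perp,\g{h}_\g{p}^\perp]\perp\g{h}$. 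The sufficiency direction is also sound: closedness of $H\cdot o$ and the first variation formula show that $\Sigma=H_\g{p}^\perp\cdot o=\exp_o(\g{h}_\g{p}^\perp)$ meets every orbit (the same device as in the paper's Lemma~\ref{lemma:OrbitEquivalenceNonClosed}), and the parity splitting of the exponential series, with even powers of $\ad(\xi)$ sending $\g{h}_\g{p}^\perp$ into itself and odd powers into $[\g{h}_\g{p}^\perp,\g{h}_\g{p}^\perp]$, kills every term of $\langle e^{-\ad(\xi)}X,\eta\rangle$. Two points you leave implicit but should state: that $T_{\Exp(\xi)\cdot o}\Sigma=\Exp(\xi)_{*o}\g{h}_\g{p}^\perp$ because $\Exp(\xi)\in H_\g{p}^\perp$ preserves $\Sigma$ (this is what justifies pairing $(e^{-\ad(\xi)}X)_\g{p}$ directly against $\eta\in\g{h}_\g{p}^\perp$), and, for (ii), that in noncompact type a totally geodesic submanifold is flat exactly when the corresponding Lie triple system is abelian, which handles both implications once (i) is established. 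Neither is a genuine gap.
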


The first criterion of polarity is credited to Gorodski~\cite{Gorodski04}.

Recall that a subspace $\g{v}$ of $\g{p}$ is called a Lie triple system if $[\g{v},[\g{v},\g{v}]]\subseteq\g{v}$.
In this case $[\g{v},\g{v}]\oplus\g{v}$ is a $\theta$-stable subalgebra of $\g{g}$ and the orbit through $o\in M$ of the connected subgroup of $G$ whose Lie algebra is $[\g{v},\g{v}]\oplus\g{v}$ is a totally geodesic submanifold of $M$.

\section{Examples of homogeneous polar foliations}\label{Sec:examples}

We now introduce the two families of polar homogeneous foliations on $M=G/K$ whose section is homothetic to the hyperbolic plane, and describe their extrinsic geometry. We assume the notation used in Section~\ref{Sec:prelims}.

\begin{theorem}\label{thm:polarExamples}
	Let $M=G/K$ be a symmetric space of noncompact type and choose an Iwasawa decomposition $\g{g}=\g{k}\oplus\g{a}\oplus\g{n}$ of $\g{g}$.
	Let $\alpha\in\Lambda$ be a simple root, and consider the following subspaces of $\g{a}\oplus\g{n}$:
	\begin{enumerate}[\rm (i)]
		\item $\g{s}_{\xi}=(\g{a}\ominus\R H_{\alpha})\oplus(\g{n}\ominus \R\xi)$, where $\xi\in\g{g}_{\alpha}$ is a nonzero vector.
		\item $\g{s}_{\g{v}}=\g{a}\oplus(\g{n}\ominus\g{v})$, where $\g{v}\subseteq \g{g}_{\alpha}$ is an abelian plane inside $\g{g}_{\alpha}$.
	\end{enumerate}
	
	The subspaces $\g{s}_{\xi}$ and $\g{s}_{\g{v}}$ are Lie subalgebras of $\g{a}\oplus\g{n}$.
	The corresponding connected subgroups $S_{\xi}$, $S_{\g{v}}$ act polarly on $M$ inducing a codimension two foliation whose section is a totally geodesic $\R\mathsf{H}^{2}$ with constant curvature $-\lvert\alpha\rvert^{2}$.
\end{theorem}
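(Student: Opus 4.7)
The plan is to reduce both claims to Proposition~\ref{th:criterion} via parallel root-space computations, relying throughout on the identity $(1-\theta)[\theta X, Y] = 2\langle X, Y\rangle H_\alpha$ for $X, Y \in \g{g}_\alpha$ recorded in Section~\ref{sec:roots}. Subalgebra closure for $\g{s}_\xi$ and $\g{s}_\g{v}$ follows from $[\g{g}_\lambda, \g{g}_\mu] \subseteq \g{g}_{\lambda+\mu}$ together with the simpleness of $\alpha$ (which rules out $\alpha = \lambda+\mu$ with $\lambda, \mu \in \Sigma^+$), plus, for $\g{s}_\g{v}$, the hypothesis $[\g{v}, \g{v}] = 0$ (which absorbs the bracket $\g{g}_\alpha \times \g{g}_\alpha \to \g{g}_{2\alpha}$). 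A short calculation using mutual orthogonality of root spaces under $\langle\cdot,\cdot\rangle$ then yields
\[
(\g{s}_\xi)_\g{p}^\perp = \R H_\alpha \oplus \R(\xi - \theta\xi), \qquad (\g{s}_\g{v})_\g{p}^\perp = (1-\theta)\g{v},
\]
both two-dimensional, so the foliation has codimension two.

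For $\g{s}_\xi$ the polarity criterion is straightforward: $\R H_\alpha \oplus \R(\xi - \theta\xi) \oplus \R(\xi + \theta\xi)$ is an $\g{sl}(2,\R)$-triple by the derived identity $[\theta\xi, \xi] = |\xi|^2 H_\alpha$ (which follows from $(1-\theta)[\theta\xi, \xi] = 2|\xi|^2 H_\alpha$ together with $[\theta\xi, \xi] \in \g{p}\cap\g{g}_0 = \g{a}$), and the $\g{k}$-part $\R(\xi + \theta\xi)$ is orthogonal to $\g{s}_\xi$ since its only $\g{n}$-component is $\xi$, which was removed. The main difficulty lies in the case $\g{s}_\g{v}$, where $(1-\theta)\g{v}$ is not abelian. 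For an orthonormal pair $X, Y \in \g{v}$, the identity forces the $\g{a}$-parts of $[\theta X, Y]$ and $[X, \theta Y]$ to vanish, so $[\theta X, Y] = [X, \theta Y] \in \g{k}_0$; consequently $Z := [(1-\theta)X, (1-\theta)Y] = -2[\theta X, Y] \in \g{k}_0$, and the orthogonality $[\g{h}_\g{p}^\perp, \g{h}_\g{p}^\perp] \perp \g{s}_\g{v}$ is automatic from $\g{k}_0 \perp \g{a}\oplus\g{n}$ (distinct weight spaces of $\ad(\g{a})$ are orthogonal). The Lie triple system condition reduces to showing $[Z, Y] \in \g{v}$, which I expect to prove via a Jacobi computation: expanding $[Z, Y]$, using abelianness of $\g{v}$ to kill $[X, Y]$, and applying $[Y, \theta Y] = -H_\alpha$ together with $[H_\alpha, X] = |\alpha|^2 X$, we arrive at $[Z, Y] = 2|\alpha|^2 X \in \g{v}$. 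Proposition~\ref{th:criterion} then applies.

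To identify the sections, observe that in both cases $[\g{h}_\g{p}^\perp, \g{h}_\g{p}^\perp] \oplus \g{h}_\g{p}^\perp$ is a three-dimensional $\theta$-stable subalgebra with one-dimensional $\g{k}$-part and two-dimensional $\g{p}$-part, hence isomorphic to $\g{sl}(2,\R)$, so the orbit through $o$ is a totally geodesic copy of $\R\mathsf{H}^2$. Substituting the computed brackets into $\langle R(e_1, e_2) e_2, e_1\rangle = -\langle [[e_1, e_2], e_2], e_1\rangle$ for an orthonormal basis $\{e_1, e_2\}$ of $\g{h}_\g{p}^\perp$ yields sectional curvature exactly $-|\alpha|^2$ in both cases, completing the proof.
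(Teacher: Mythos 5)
Your proposal is correct and follows essentially the same route as the paper: compute $\g{s}_{\g{p}}^{\perp}$ explicitly, check that it is a Lie triple system with $[\g{s}_{\g{p}}^{\perp},\g{s}_{\g{p}}^{\perp}]$ orthogonal to $\g{s}$ (via the identity $(1-\theta)[\theta X,Y]=2\langle X,Y\rangle H_\alpha$ and, in case (ii), abelianness of $\g{v}$), apply Proposition~\ref{th:criterion}, and obtain the curvature $-\lvert\alpha\rvert^{2}$ from the bracket formula for the curvature of the totally geodesic orbit. The only point you leave implicit is that $S_{\xi}$ and $S_{\g{v}}$ are closed and their orbits actually form a foliation (needed to invoke Proposition~\ref{th:criterion}), which the paper dispatches in one line using that $AN$ acts simply transitively and $\Exp\colon\g{a}\oplus\g{n}\to AN$ is a diffeomorphism.
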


\begin{proof}
It is clear that $\g{s}_{\xi}$ and $\g{s}_{\g{v}}$ are subalgebras of $\g{a}\oplus\g{n}$, so we can consider the connected Lie subgroups $S_{\xi}$, $S_{\g{v}}$ associated with these subalgebras.
Since $AN$ acts simply transitively on $M$ and $\Exp\colon \g{a}\oplus\g{n}\to AN$ is a diffeomorphism, it follows that $S_{\xi}$ and $S_{\g{v}}$ are closed subgroups inducing a homogeneous foliation on $M$ of codimension $2$.
It remains to show that both subgroups act polarly.

Firstly, if $S=S_{\xi}$, we see that $\g{s}_{\g{p}}^{\perp}=\operatorname{span}\{ H_{\alpha},(1-\theta)\xi \}$.
Now, a direct computation shows that $[\g{s}_{\g{p}}^{\perp},\g{s}_{\g{p}}^{\perp}]=\R (1+\theta)\xi$ is orthogonal to $\g{s}$, and $[\g{s}_{\g{p}}^{\perp},[\g{s}_{\g{p}}^{\perp},\g{s}_{\g{p}}^{\perp}]]$ is spanned by $[H_{\alpha},(1+\theta)\xi]=\lvert\alpha\rvert^{2}(1-\theta)\xi$ and $[(1-\theta)\xi,(1+\theta)\xi]=-(1-\theta)[\theta\xi,\xi]=-2\lvert\xi\rvert^{2}H_{\alpha}$.
We therefore obtain $[\g{s}_{\g{p}}^{\perp},[\g{s}_{\g{p}}^{\perp},\g{s}_{\g{p}}^{\perp}]]=\g{s}_{\g{p}}^{\perp}$, which means that $\g{s}_{\g{p}}^{\perp}$ is a Lie triple system.
By applying Proposition~\ref{th:criterion}, we deduce that $S_{\xi}$ acts polarly, as desired.
Note that if $S_{\g{p}}^{\perp}\cdot o$ is the section through $o$, then $S_{\g{p}}^{\perp}\cdot o$ is a closed, simply connected, totally geodesic surface whose tangent space is $\g{s}_{\g{p}}^{\perp}$. Its sectional curvature can be calculated using for example~\cite[Chapter IV,~Theorem~4.2]{Helgason}, which yields
\begin{align*}
\operatorname{sec}(S_{\g{p}}^{\perp}\cdot o)
&{}=\frac{-\langle[[H_{\alpha},(1-\theta)\xi],(1-\theta)\xi],H_{\alpha}\rangle}{\lvert(1-\theta)\xi\rvert^2\lvert H_\alpha\rvert^2} \\
&{}=\frac{-\lvert[H_{\alpha},(1-\theta)\xi]\rvert^{2}}{2\lvert\xi\rvert^{2}\lvert\alpha\rvert^{2}}
=\frac{-\lvert(1+\theta)\lvert\alpha\rvert^{2}\xi\rvert^{2}}{{2\lvert\xi\rvert^{2}\lvert\alpha\rvert^{2}}}
=-\lvert\alpha\rvert^{2},
\end{align*}
so $S_\g{p}^{\perp}\cdot o$ is a real space form of constant curvature $-\lvert\alpha\rvert^{2}$.

In the case $S=S_{\g{v}}$, the normal space is $\g{s}_{\g{p}}^{\perp}=(1-\theta)\g{v}$.
Choose two orthogonal vectors $\xi$, $\eta\in\g{v}$ with norm ${1}/{\sqrt{2}}$.
Since $[(1-\theta)\xi,(1-\theta)\eta]=-2[\xi,\theta\eta]\in\g{k}_{0}$, it follows that $[\g{s}_{\g{p}}^{\perp},\g{s}_{\g{p}}^{\perp}]=\R [\xi,\theta\eta]$ is perpendicular to $\g{s}$.
Furthermore, $\theta[\xi,\theta\eta]=[\xi,\theta\eta]$ yields 
\begin{align*}
[(1-\theta)\xi,[\xi,\theta\eta]]
&{}=(1-\theta)[\xi,[\theta\xi,\eta]]
=-(1-\theta)[\eta,[\xi,\theta\xi]]\\
&{}=(1-\theta)[\eta,\lvert\xi\rvert^{2}H_{\alpha}]
=-\lvert\xi\rvert^{2}\lvert\alpha\rvert^{2}(1-\theta)\eta\in\g{s}_{\g{p}}^{\perp},
\end{align*}
and a similar calculation gives $[(1-\theta)\eta,[\xi,\theta\eta]]=\lvert\alpha\rvert^{2}\lvert\eta\rvert^{2}(1-\theta)\xi\in\g{s}_{\g{p}}^{\perp}$, and thus $[\g{s}_{\g{p}}^{\perp},[\g{s}_{\g{p}}^{\perp},\g{s}_{\g{p}}^{\perp}]]=\g{s}_{\g{p}}^{\perp}$.
Proposition~\ref{th:criterion} readily implies that the action of $S_{\g{v}}$ is polar with section $S_{\g{p}}^{\perp}\cdot o$.
The same argument given in the previous paragraph allows us to determine the section by computing its curvature.
In this case, taking into account our previous calculations,
\[
\operatorname{sec}(S_{\g{p}}^{\perp}\cdot o)
=-\frac{\langle [[(1-\theta)\xi,(1-\theta)\eta],(1-\theta)\eta],(1-\theta)\xi \rangle}{\lvert(1-\theta)\xi\rvert^{2}\lvert(1-\theta)\eta\rvert^{2}}
=-\lvert\alpha\rvert^2,
\]
which finishes the proof.
\end{proof}

The previous theorem shows that the examples that appear in Theorem~\ref{th:MainTheoremList} give rise to homogeneous polar foliations.
Furthermore, it follows from the next lemma that different choices of $\xi$ in case~(\ref{th:main:ker+n-l}) or of $\g{v}$ in~(\ref{th:main:a+n-v}) give orbit equivalent actions.

\begin{lemma}
Let $\alpha\in\Lambda$ and $k\geq 1$.
Then, the group $K_0$ acts transitively on the set of abelian subspaces of dimension $k$ of $\g{g}_\alpha$.
\end{lemma}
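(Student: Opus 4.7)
The plan is to reduce the statement to a classical transitivity result on the rank one boundary component associated with $\alpha$.

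First, since $K_0=Z_K(\g{a})$ commutes pointwise with $\g{a}$, the adjoint representation of $K_0$ preserves each root space $\g{g}_\lambda$. In particular, $K_0$ acts linearly on $\g{g}_\alpha$ and on $\g{g}_{2\alpha}$, and the bracket $[\,\cdot\,,\,\cdot\,]\colon\g{g}_\alpha\times\g{g}_\alpha\to\g{g}_{2\alpha}$ is $K_0$-equivariant. Hence $K_0$ permutes the abelian subspaces of $\g{g}_\alpha$ of each fixed dimension, and the task is to show that this action is transitive on such subspaces of dimension $k$.

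I would then take $\Phi=\{\alpha\}$ in the notation of Section~\ref{sec:parabolic}: the boundary component $B_{\{\alpha\}}$ is a rank one symmetric space of noncompact type, so up to coverings it is one of $\R\mathsf{H}^n$, $\mathbb{C}\mathsf{H}^n$, $\mathbb{H}\mathsf{H}^n$, $\mathbb{O}\mathsf{H}^2$. In each case, there is a classical identification of $\g{g}_\alpha$ with a module $\mathbb{K}^m$ over the associated composition algebra $\mathbb{K}\in\{\R,\mathbb{C},\mathbb{H},\mathbb{O}\}$, under which the bracket $\g{g}_\alpha\times\g{g}_\alpha\to\g{g}_{2\alpha}$ corresponds to a nonzero scalar multiple of the imaginary part of a $\mathbb{K}$-Hermitian form. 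A subspace $W\subseteq\g{g}_\alpha$ is therefore abelian if and only if this imaginary part vanishes on $W\times W$, that is, if $W$ is a totally real subspace of $\mathbb{K}^m$.

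Finally, the adjoint action of $K_0$ on $\g{g}_\alpha$ contains (via the embedding of the rank one isotropy stabilizing the direction of $H_\alpha$) the standard actions on $\mathbb{K}^m$ of $\mathsf{SO}(m)$, $\mathsf{U}(m)$, $\mathsf{Sp}(1)\cdot\mathsf{Sp}(m)$, or $\mathsf{Spin}(7)$, respectively. It is well known that each of these groups acts transitively on totally real $k$-planes of $\mathbb{K}^m$ (the case $k=1$ recovering the classical transitivity on the unit sphere), which combined with the previous paragraph yields the lemma.

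The main obstacle I expect is verifying these identifications in a uniform way across the four cases, and in particular the octonionic case, where associativity fails and ``totally real'' must be defined intrinsically from the bracket. A cleaner alternative that avoids the case analysis is an induction on $k$: the base case $k=1$ is the classical transitivity of $K_0$ on the unit sphere of $\g{g}_\alpha$; for $k\geq 2$, given two abelian $k$-planes one first brings them into position where they share a vector $\xi$, and then applies the inductive hypothesis to the action of the stabilizer $K_0^\xi$ on the centralizer of $\xi$ in $\g{g}_\alpha$, which inherits a rank one structure of one lower dimension.
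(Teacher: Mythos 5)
Your main route coincides with the paper's own proof: there the problem is reduced to the rank one subalgebra $\g{g}^\alpha$ generated by $\g{g}_\alpha\oplus\g{g}_{-\alpha}$ (so that the relevant isotropy is $K_0^\alpha=K_0\cap G^\alpha$ by Helgason), abelian subspaces are identified with totally real subspaces of $\g{g}_\alpha\cong\mathbb{F}^{n-1}$, and one invokes the classical transitivity of $\mathsf{SO}(n-1)$, $\mathsf{S}(\mathsf{U}(n-1)\mathsf{U}(1))$, $\mathsf{Sp}(n-1)\mathsf{Sp}(1)$, or $\mathsf{Spin}(7)$, with the octonionic case handled simply by observing that its abelian subspaces are at most one-dimensional. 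So your proposal is correct and essentially the same argument; the inductive alternative you sketch at the end is not needed.
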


\begin{proof}
Following~\cite[Chapter~IX, \S2]{Helgason}, we consider the Lie subalgebra $\g{g}^\alpha$ generated by $\g{g}_\alpha$ and $\g{g}_{-\alpha}$.
This Lie algebra is simple and its Cartan decomposition is $\g{g}^\alpha=\g{k}^\alpha\oplus\g{p}^\alpha$, with $\g{k}^\alpha=\g{k}\cap\g{g}^\alpha$, $\g{p}^\alpha=\g{p}\cap\g{g}^\alpha$.
It turns out that $\R H_\alpha$ is a maximal abelian subspace of $\g{p}^\alpha$, and
the root space decomposition of $\g{g}^\alpha$ is $\g{g}^\alpha=\g{g}_{-2\alpha}\oplus\g{g}_{-\alpha}\oplus(\g{k}_0^\alpha\oplus\R H_\alpha)\oplus\g{g}_\alpha\oplus\g{g}_{2\alpha}$, where $\g{k}_0^\alpha$ is the centralizer of $\R H_\alpha$ in $\g{k}^\alpha$, and $\g{k}_0^\alpha=\g{k}_0\cap\g{g}^\alpha$.
Let $G^\alpha$, $K^\alpha$, $K_0^\alpha$ be the connected subgroups of $G$ whose Lie algebras are $\g{g}^\alpha$, $\g{k}^\alpha$, and $\g{k}_0^\alpha$.
Then, by~\cite[Chapter~IX, Lemma~2.3]{Helgason}, we have
$K^\alpha=K\cap G^\alpha$ and $K_0^\alpha=K_0\cap G^\alpha$.
Therefore, in order to prove this lemma, it suffices to show that $K_0^\alpha$ acts transitively on the set of abelian subspaces of $\g{g}_\alpha$.

Obviously, $G^\alpha/K^\alpha$ is a Riemannian symmetric space of noncompact type and rank one, that is, a hyperbolic space $\mathbb{F}\mathsf{H}^n$, where $\mathbb{F}\in\{\R,\mathbb{C},\mathbb{H},\mathbb{O}\}$ and $n\geq 2$ ($n=2$ if $\mathbb{F}=\mathbb{O}$).
Note that $\g{g}_\alpha\cong \mathbb{F}^{n-1}$ and $\dim\g{g}_{2\alpha}=\dim_\R\mathbb{F}-1$.
If $\mathbb{F}=\R$, then $\g{g}_\alpha$ is abelian and $K_0^\alpha\cong\mathsf{SO}(n-1)$ acts in the standard way on $\g{g}_\alpha$; this action is transitive on the Grassmannian of $k$-planes of $\R^{n-1}$.
If $\mathbb{F}=\mathbb{O}$, then the only nonzero abelian subspaces of $\g{g}_\alpha\cong\mathbb{O}$ are 1-dimensional, and $K_0^\alpha\cong\mathsf{Spin}(7)$ acts on $\mathbb{O}$ by its irreducible 8-dimensional spin representation, which is transitive in $\mathsf{S}^{7}$ \cite{Borel}.
Finally, if $\mathbb{F}\in\{\mathbb{C},\mathbb{H}\}$, recall that abelian subspaces of $\g{g}_\alpha$ are precisely totally real subspaces of $\g{g}_\alpha\cong\mathbb{F}^{n-1}$.
In these cases we have the standard action of $\mathsf{S}(\mathsf{U}(n-1)\mathsf{U}(1))$ on $\mathbb{C}^{n-1}$ if $\mathbb{F}=\mathbb{C}$, and the standard action of $\mathsf{Sp}(n-1)\mathsf{Sp}(1)$ on $\mathbb{H}^{n-1}$ if $\mathbb{F}=\mathbb{H}$.
Thus, if $\g{v}_1$ and $\g{v}_2$ are two totally real subspaces of $\g{g}_\alpha$ of dimension $k$, choose an orthonormal basis of $\g{v}_1$ and an orthonormal basis of $\g{v}_2$.
Since $\g{v}_1$ and $\g{v}_2$ are totally real, these two bases are not only orthonormal, but $\mathbb{F}$-orthonormal.
By definition of $\mathsf{U}(n-1)$ or $\mathsf{Sp}(n-1)$ it is then clear that there is an element of $K_0^\alpha$ that maps one basis to the other.
This finishes the proof.
\end{proof}

We now exhibit examples~(\ref{th:main:ker+n-l}) and~(\ref{th:main:a+n-v}) of Theorem~\ref{th:MainTheoremList} as canonical extensions of actions on a rank one boundary component.
Let $\alpha\in\Lambda$ be a simple root, $\xi\in \g{g}_{\alpha}$ a unit vector and $\g{v}\subseteq \g{g}_{\alpha}$ an abelian plane.
We consider the set $\Phi=\{\alpha\}\subseteq \Lambda$.
Then, the subalgebras constructed in Section~\ref{sec:parabolic} take the form
\begin{equation*}
	\begin{split}
		\g{l}_{\Phi}={}&\g{g}_{0}\oplus\g{g}_{-2\alpha}\oplus\g{g}_{-\alpha}\oplus\g{g}_{\alpha}\oplus\g{g}_{2\alpha},\\
		\g{a}_{\Phi}={}&\ker \alpha,\\
		\g{n}_{\Phi}={}&\g{n}\ominus(\g{g}_{\alpha}\oplus\g{g}_{2\alpha}), \\
		\g{m}_{\Phi}={}& \g{k}_{0}\oplus\R H_{\alpha}\oplus\g{g}_{-2\alpha}\oplus\g{g}_{-\alpha}\oplus\g{g}_{\alpha}\oplus\g{g}_{2\alpha},
	\end{split}
\end{equation*}
and $B_{\Phi}=M_{\Phi}\cdot o$ is a rank one noncompact symmetric space whose tangent space at $o$ is $T_{o}B_{\Phi}=\R H_{\alpha}\oplus \g{p}_{\alpha}\oplus \g{p}_{2\alpha}$.
Consider the subalgebras $\hat{\g{s}}_{\xi}=(\g{g}_{\alpha}\ominus \R \xi)\oplus \g{g}_{2\alpha}$ and $\hat{\g{s}}_{\g{v}}=\R H_{\alpha} \oplus (\g{g}_{\alpha}\ominus \g{v})\oplus \g{g}_{2\alpha}$ of $\g{g}_{\Phi}=[\g{m}_{\Phi},\g{m}_{\Phi}]$.
The corresponding connected subgroups $\hat{S}_{\xi}$ and $\hat{S}_{\g{v}}$ act polarly on $B_{\Phi}$ inducing a foliation, due to Theorem~\ref{thm:polarExamples}.
Recall that the canonical extensions of the actions of $\hat{S}_{\xi}$ and $\hat{S}_{\g{v}}$ are the actions of the subgroups $\hat{S}_{\xi}A_{\Phi}N_{\Phi}$ and $\hat{S}_{\g{v}}A_{\Phi}N_{\Phi}$, respectively.
Observe that $\hat{\g{s}}_{\xi}\oplus\g{a}_{\Phi}\oplus\g{n}_{\Phi}=\g{s}_{\xi}$, while $\hat{\g{s}}_{\g{v}}\oplus\g{a}_{\Phi}\oplus\g{n}_{\Phi}=\g{s}_{\g{v}}$, and this readily implies that these canonical extensions are precisely the actions of $S_{\xi}$ and $S_{\g{v}}$.
We deduce that Theorem~\ref{th:MainTheoremList} implies Theorem~\ref{th:MainTheoremCanonicalExtension}.

The remaining part of this section will be devoted to computing the mean curvature of the orbits in each example.
To this end, we consider the solvable model $M=AN$ discussed in Section~\ref{sec:roots}.
If $\g{s}\subseteq \g{a}\oplus \g{n}$, we refer to its orthogonal complement in $\g{a}\oplus\g{n}$ with respect to $\langle\cdot,\cdot\rangle_{AN}$ as $\g{s}^{\perp}$.
Note that if $S\subseteq AN$ is a Lie subgroup, the isometry $g\in AN\mapsto g\cdot o\in M$ induces an orbit equivalence between the action of $S$ on $M$ and the action of $S$ on $AN$ by left multiplication.

Recall that if $M\subseteq \widetilde{M}$ is a submanifold of a Riemannian manifold with second fundamental form $\II$, we define the mean curvature vector of $M$ at $p$ as $\mathcal{H}_{p}=\sum_{i}\II(e_{i},e_{i})$, where $\{e_{i}\}_{i}$ is an orthonormal basis of $T_{p} M$.
In other words, $\mathcal{H}$ is the trace of the second fundamental form.
If $S\subseteq AN$ is a connected subgroup of $AN$, it is easy to see from the formula for the Levi-Civita connection that the second fundamental form of $S\subseteq AN$ at $e$ satisfies the identity 
\begin{equation}\label{eq:II}
	\langle \II(X,X),\eta \rangle_{AN}=\frac{1}{4}\langle (1-\theta)[\theta X,X],\eta \rangle
\end{equation}
for each $X\in \g{s}$ and $\eta\in \g{s}^{\perp}$.

Let us start by discussing foliations of type (\ref{th:main:ker+n-l}).

\begin{proposition}
	Let $\alpha\in\Lambda$ and $\xi\in \g{g}_{\alpha}$ a vector such that $\langle \xi,\xi\rangle=1$. All the orbits of $S_{\xi}$ are isometrically congruent.
	Furthermore, the mean curvature vector of $S_{\xi}$ at $e$ is given by the following expression:
	\[
		\mathcal{H}_{e}=\left( \dim \g{g}_{\alpha} + 2 \dim \g{g}_{2\alpha}-1 \right)H_{\alpha}.
	\]
\end{proposition}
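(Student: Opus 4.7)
\emph{Congruence of orbits.} The proof has two parts; I begin with orbit congruence. Since $[H_{\alpha},\xi]=\alpha(H_{\alpha})\xi=|\alpha|^{2}\xi$, the subspace $\g{s}_{0}=\R H_{\alpha}\oplus\R\xi$ is a $2$-dimensional Lie subalgebra of $\g{a}\oplus\g{n}$; let $S_{0}\subseteq AN$ be the connected subgroup it generates. Bracket computations show $\g{s}_{0}$ normalizes $\g{s}_{\xi}$: $H_{\alpha}$ preserves each root space $\g{g}_{\lambda}$ and in particular stabilizes $\R\xi$; the vector $\xi$ commutes with $\g{a}\ominus\R H_{\alpha}$ because $[\xi,H]=-\alpha(H)\xi=0$ there; and $[\xi,\g{g}_{\lambda}]\subseteq\g{g}_{\lambda+\alpha}$ is orthogonal to $\R\xi$ for each $\lambda\in\Sigma^{+}$. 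Hence $S_{0}$ acts by ambient isometries mapping $S_{\xi}$-orbits to $S_{\xi}$-orbits, and descends to an action on the $2$-dimensional leaf space $M/S_{\xi}$, which is diffeomorphic to the section $\R\mathsf{H}^{2}$. Because $\g{s}_{0}\cap\g{s}_{\xi}=0$ this induced action is free, and a free action of a connected $2$-dimensional Lie group on a connected $2$-dimensional manifold is automatically transitive, so every leaf is ambient-isometric to $S_{\xi}\cdot o$.

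\emph{Mean curvature at $e$.} Using the solvable model $M\cong AN$, the normal space at $e$ is $\g{s}_{\xi}^{\perp}=\R H_{\alpha}\oplus\R\xi$, and I apply formula~\eqref{eq:II} to each vector in an $AN$-orthonormal basis of $\g{s}_{\xi}$ adapted to $(\g{a}\ominus\R H_{\alpha})\oplus\bigoplus_{\lambda\in\Sigma^{+}}(\g{g}_{\lambda}\cap\g{s}_{\xi})$. Vectors $X\in\g{a}\ominus\R H_{\alpha}$ contribute nothing since $\theta X=-X$ forces $[\theta X,X]=0$. For $X\in\g{g}_{\lambda}\cap\g{s}_{\xi}$ with $\langle X,X\rangle_{AN}=1$ (equivalently $\langle X,X\rangle=2$), the identity $(1-\theta)[\theta X,X]=2\langle X,X\rangle H_{\lambda}=4H_{\lambda}$ together with the orthogonality $\g{a}\perp\g{n}$ yield $\langle \II(X,X),\xi\rangle_{AN}=0$ and $\langle \II(X,X),H_{\alpha}\rangle_{AN}=\langle\lambda,\alpha\rangle$. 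Combined with $\langle H_{\alpha},H_{\alpha}\rangle_{AN}=|\alpha|^{2}$, this forces $\II(X,X)=\frac{\langle\lambda,\alpha\rangle}{|\alpha|^{2}}H_{\alpha}$.

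\emph{Assembling the trace.} Summing over the adapted basis of $\g{n}\ominus\R\xi$,
\[
\mathcal{H}_{e}=\frac{H_{\alpha}}{|\alpha|^{2}}\Bigl(\sum_{\lambda\in\Sigma^{+}}\dim\g{g}_{\lambda}\,\langle\lambda,\alpha\rangle-\langle\alpha,\alpha\rangle\Bigr),
\]
where the subtracted term accounts for the one missing dimension inside $\g{g}_{\alpha}$. By the definition of $\delta$ and equation~\eqref{eq:delta}, the parenthetical simplifies to $2\langle\delta,\alpha\rangle-|\alpha|^{2}=|\alpha|^{2}(\dim\g{g}_{\alpha}+2\dim\g{g}_{2\alpha}-1)$, producing the claimed formula. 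The only real subtlety (rather than a genuine obstacle) is reconciling the two inner products on $\g{n}$: the factor $2$ in $\langle X,X\rangle=2\langle X,X\rangle_{AN}$ cancels precisely against the $\tfrac{1}{4}$ appearing in~\eqref{eq:II}, so no extraneous constants survive in the final expression.
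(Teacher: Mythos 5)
Your mean curvature computation is essentially the paper's own argument: the same use of formula \eqref{eq:II}, the same evaluation $\II(X,X)=\frac{\langle\lambda,\alpha\rangle}{\lvert\alpha\rvert^{2}}H_{\alpha}$ for an $AN$-unit vector $X\in\g{g}_{\lambda}\ominus\R\xi$ (with $\II(H,H)=0$ on the abelian part), and the same reduction of the trace to $2\langle\delta,\alpha\rangle-\lvert\alpha\rvert^{2}$ via \eqref{eq:delta}; the bookkeeping of the factor $2$ between $\langle\cdot,\cdot\rangle$ and $\langle\cdot,\cdot\rangle_{AN}$ is handled exactly as in the paper. Where you genuinely diverge is the congruence of the orbits. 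The paper simply observes that $\g{s}_{\xi}=\ker\alpha\oplus(\g{n}\ominus\R\xi)$ is an ideal of $\g{a}\oplus\g{n}$, so $S_{\xi}$ is normal in $AN$, every orbit is $S_{\xi}\cdot g=gS_{\xi}$, and the left translation $L_{g}$ is an ambient isometry carrying $S_{\xi}\cdot e$ onto $S_{\xi}\cdot g$ --- one line. You instead introduce the transversal subgroup $S_{0}$ with Lie algebra $\R H_{\alpha}\oplus\R\xi$, check that it normalizes $\g{s}_{\xi}$, and deduce transitivity of the induced $S_{0}$-action on the leaf space; this is correct but longer, and two small points deserve tightening. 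First, $\g{s}_{0}\cap\g{s}_{\xi}=0$ only yields discrete stabilizers (local freeness), not freeness; fortunately local freeness already makes the $S_{0}$-orbits open in the connected leaf space, which is all your transitivity argument actually uses. Second, the identification of the leaf space with the section $\R\mathsf{H}^{2}$ is neither justified nor needed: what you need is that the leaf space is a connected smooth $2$-manifold, which holds because the closed subgroup $S_{\xi}$ acts freely and properly on $AN$ by left translations (or, most directly, because $S_{\xi}$ is normal in $AN$, so the leaf space is the Lie group $AN/S_{\xi}$ --- at which point the paper's shortcut becomes available anyway). With these remarks your proof is complete and correct.
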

\begin{proof}
	Observe that $\g{s}_{\xi}=\ker \alpha\oplus (\g{n}\ominus\R\xi)$ is an ideal of $\g{a}\oplus\g{n}$.
	As a consequence, if $g\in AN$ is an arbitrary point, we have $S_{\xi}\cdot g=gg^{-1}S_{\xi}g=gS_{\xi}$, because $S_{\xi}$ is a normal subgroup of $AN$.
	Thus, $S_{\xi}\cdot g$ is isometric to $S_{\xi}$ via the left multiplication by $g$.
	
	We proceed to compute $\mathcal{H}_{o}$.
	For this, it suffices to determine the vectors $\II(H,H)$ and $\II(X,X)$ for each $H\in \g{a}$ and $X\in \g{g}_{\lambda}\ominus \R \xi$, where $\lambda$ is any positive root.
	Given any $H\in \g{a}$, it is clear from \eqref{eq:II} that $\II(H,H)=0$.
	On the other hand, if $\lambda \in \Sigma^{+}$ and $X\in \g{g}_{\lambda}$ is a unit vector orthogonal to $\xi$, we obtain that $1=\langle X,X \rangle_{AN}=|X|^{2}/2$ and $\langle \II(X,X),\eta\rangle_{AN}=\frac{1}{2}\langle |X|^{2} H_{\lambda},\eta \rangle=\langle H_{\lambda},\eta \rangle=\langle \frac{\langle \lambda,\alpha \rangle}{|\alpha|^{2}}H_{\alpha},\eta \rangle_{AN}$ for each $\eta\in \g{s}^{\perp}=\operatorname{span}\{ H_{\alpha},\xi \}$, which means that $\II(X,X)=\frac{\langle \lambda,\alpha \rangle}{|\alpha|^{2}}H_{\alpha}$.
	In conclusion,
	\begin{align*}
	\mathcal{H}_{e}={}&\frac{1}{|\alpha|^{2}}\bigg( \sum_{\lambda \in \Sigma^{+}\setminus \{\alpha\}} (\dim \g{g}_{\lambda})\langle \lambda, \alpha \rangle+(\dim \g{g}_{\alpha}-1) |\alpha|^{2} \bigg)H_{\alpha}=\frac{1}{|\alpha|^{2}}(\langle 2\delta,\alpha \rangle-|\alpha|^{2})H_{\alpha} \\
	={}&(\dim \g{g}_{\alpha}+2 \dim \g{g}_{2\alpha}-1)H_{\alpha},
	\end{align*}
	where we have used \eqref{eq:delta} for the last equality.
\end{proof}

A direct consequence of the previous proposition is that the foliation induced by $S_{\xi}$ consists of congruent minimal submanifolds if and only if $2\alpha\notin\Sigma$ and $\dim \g{g}_{\alpha}=1$.
If this is the case, then $\g{s}_{\xi}=\g{a}_{\Phi}\oplus\g{n}_{\Phi}$ for $\Phi=\{\alpha\}$, and $B_{\Phi}$ is homothetic to the hyperbolic plane.
Furthermore, $\hat{\g{s}}_{\xi}=0$, which means that $\hat{S}_{\xi}$ acts trivially on $B_{\Phi}$.
More generally, the extrinsic geometry of the orbits for the solvable part $A_{\Phi}N_{\Phi}$ of a parabolic subgroup was studied by Tamaru in \cite{Tamaru}, where he proved that for any choice of $\Phi\subseteq\Lambda$ the subgroup  $A_{\Phi}N_{\Phi}$ induces a harmonic foliation on $M$ whose orbits are congruent.

We now consider the foliations from case (\ref{th:main:a+n-v}).
In this setting, the orbits are not isometrically congruent, as their mean curvature does not have constant length.
More precisely:

\begin{proposition}
	Let $\alpha\in\Lambda$ be a simple root and $\g{v}\subseteq \g{g}_{\alpha}$ an abelian subspace of dimension $2$. Fix a vector $\xi\in \g{v}$ with $|\xi|=1$, and denote by $\mathcal{H}_{t}$ the mean curvature vector of $S_{\g{v}}\cdot \Exp(t\xi)$ at $\Exp(t\xi)$ and by $L_{\Exp(t\xi)}\colon AN\to AN$ the left translation by $\Exp(t\xi)$. Then,
	\[
		(L_{\Exp(-t\xi)})_{*\Exp(t\xi)}\mathcal{H}_{t}=\frac{t|\alpha|^{2}}{2+t^{2}|\alpha|^{2}}(\dim \g{g}_{\alpha}+2 \dim \g{g}_{2\alpha} - 1)(tH_{\alpha}-2\xi).
	\]
	In particular, the orbit through $\Exp(t\xi)$ is minimal if and only if $t=0$.
\end{proposition}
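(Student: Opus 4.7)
The plan is to exploit the isometry $L_{\Exp(-t\xi)}\colon AN\to AN$, which sends $\Exp(t\xi)$ to $e$ and maps the orbit $S_\g{v}\cdot\Exp(t\xi)$ to the conjugate subgroup $\tilde{S}:=\Exp(-t\xi)S_\g{v}\Exp(t\xi)$ with Lie algebra $\tilde{\g{s}}:=\Ad(\Exp(-t\xi))\g{s}_\g{v}=e^{-t\ad\xi}\g{s}_\g{v}$. Hence computing $(L_{\Exp(-t\xi)})_{*\Exp(t\xi)}\mathcal{H}_t$ reduces to computing $\mathcal{H}_{e}(\tilde{S})$ via formula \eqref{eq:II}.

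The first main step is to establish the $\langle\cdot,\cdot\rangle_{AN}$-orthogonal decomposition
\[
\tilde{\g{s}}=(\ker\alpha)\oplus\R(H_\alpha+t|\alpha|^2\xi)\oplus\bigl((\g{g}_\alpha\ominus\g{v})\oplus\g{g}_{2\alpha}\bigr)\oplus V,\qquad V:=\bigoplus_{\lambda\in\Sigma^{+}\setminus\{\alpha,2\alpha\}}\g{g}_\lambda.
\]
The first two summands come from $e^{-t\ad\xi}H=H+t\alpha(H)\xi$ for $H\in\g{a}$ (the series terminates since $[\xi,\xi]=0$). For the third, $e^{t\ad\xi}$ sends $\g{g}_\alpha\ominus\g{v}$ into $(\g{g}_\alpha\ominus\g{v})\oplus\g{g}_{2\alpha}\subseteq\g{s}_\g{v}$ (using $\g{g}_{3\alpha}=0$) and pointwise fixes $\g{g}_{2\alpha}$, so both lie in $\tilde{\g{s}}$. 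For the fourth, the crucial observation is that no root $\lambda+k\alpha$ with $\lambda\in\Sigma^{+}\setminus\{\alpha,2\alpha\}$ and $k\geq 1$ equals $\alpha$ or $2\alpha$, so $V$ is $\ad\xi$-invariant and therefore stable under $e^{\pm t\ad\xi}$; in particular $V\subseteq\tilde{\g{s}}$. Orthogonality of the blocks in $\langle\cdot,\cdot\rangle_{AN}$ is routine from the root-space grading (block~2 is orthogonal to block~3 because the $\xi$-component of $H_\alpha+t|\alpha|^2\xi$ is orthogonal to $\g{g}_\alpha\ominus\g{v}$). Taking the $AN$-orthogonal complement and imposing perpendicularity to $H_\alpha+t|\alpha|^2\xi$ yields $\tilde{\g{s}}^\perp=\Span\{w_1,w_2\}$ with $w_1:=tH_\alpha-2\xi$ and $w_2:=\eta$, where $\eta\in\g{v}\ominus\R\xi$ has unit norm; one checks $\langle w_1,w_2\rangle_{AN}=0$, $|w_1|^2_{AN}=2+t^2|\alpha|^2$ and $|w_2|^2_{AN}=1/2$.

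Next I would compute, for each $w\in\{w_1,w_2\}$, the scalar $\langle\mathcal{H}_{e}(\tilde{S}),w\rangle_{AN}=\tfrac{1}{4}\operatorname{tr}_{\tilde{\g{s}}}\bigl(X\mapsto\langle(1-\theta)[\theta X,X],w\rangle\bigr)$ block by block, using $\sqrt{2}$ times the standard root-space orthonormal bases on blocks~3 and~4 and a normalized version of $H_\alpha+t|\alpha|^2\xi$ on block~2. The $\ker\alpha$-block trivially contributes nothing because $[\theta X,X]=0$ on $\g{a}$. Evaluating on $b=H_\alpha+t|\alpha|^2\xi$ and simplifying with $[H_\alpha,\xi]=|\alpha|^2\xi$ and $(1-\theta)[\theta\xi,\xi]=2H_\alpha$ gives a trace contribution of $4t|\alpha|^2$ in the $w_1$-direction. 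On block~3, using $(1-\theta)[\theta X,X]=2|X|^2H_\lambda$ for $X\in\g{g}_\lambda$, the trace contribution in the $w_1$-direction sums to $4t|\alpha|^2(\dim\g{g}_\alpha+2\dim\g{g}_{2\alpha}-2)$. On $V$ the analogous sum is $4t\sum_{\lambda\in\Sigma^{+}\setminus\{\alpha,2\alpha\}}(\dim\g{g}_\lambda)\langle\lambda,\alpha\rangle$, which vanishes after adding back the $\alpha$ and $2\alpha$ terms and applying \eqref{eq:delta}. All $w_2$-contributions are identically zero: on blocks~3 and~4 the output $2|X|^2H_\lambda$ lies in $\g{a}$ and is orthogonal to $\eta\in\g{g}_\alpha$, while on block~2 the $\xi$-term produced above is still orthogonal to $\eta$ since $\xi\perp\eta$ in $\g{v}$.

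Assembling the pieces yields $\langle\mathcal{H}_{e}(\tilde{S}),w_1\rangle_{AN}=t|\alpha|^2(\dim\g{g}_\alpha+2\dim\g{g}_{2\alpha}-1)$ and $\langle\mathcal{H}_{e}(\tilde{S}),w_2\rangle_{AN}=0$, so $\mathcal{H}_{e}(\tilde{S})=\langle\mathcal{H}_{e}(\tilde{S}),w_1\rangle_{AN}w_1/|w_1|^2_{AN}$, which is the claimed formula. The final statement that minimality occurs only at $t=0$ is immediate: since $\g{v}$ is two-dimensional, $\dim\g{g}_\alpha\geq 2$ and hence $\dim\g{g}_\alpha+2\dim\g{g}_{2\alpha}-1\geq 1$, while $tH_\alpha-2\xi$ does not vanish at $t=0$. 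I expect the main technical obstacle to be the orthogonal block decomposition of $\tilde{\g{s}}$, particularly the invariance $e^{-t\ad\xi}V=V$, which replaces a potentially complicated image basis of $V$ under $e^{-t\ad\xi}$ by the simple root-space basis; without it the Gram matrix of any natural basis of $\tilde{\g{s}}$ would have nontrivial off-diagonal terms and the trace computation would become considerably more intricate.
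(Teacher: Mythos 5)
Your proposal is correct and follows essentially the same route as the paper: reduce via the left translation to the conjugate subgroup with Lie algebra $\Ad(\Exp(-t\xi))\g{s}_{\g{v}}=\ker\alpha\oplus\R(H_{\alpha}+t\lvert\alpha\rvert^{2}\xi)\oplus(\g{n}\ominus\g{v})$, identify the $AN$-normal space $\Span\{tH_{\alpha}-2\xi,\eta\}$, and sum the contributions of \eqref{eq:II} over an adapted orthonormal basis, concluding with \eqref{eq:delta}. The only cosmetic differences are that you compute $e^{-t\ad\xi}\g{s}_{\g{v}}$ directly (rather than via orthogonality to $\Ad(g)^{*}\g{v}$ as in the paper) and organize the bookkeeping as inner products of $\mathcal{H}$ with the two normal directions instead of computing each $\II(X,X)$ as a vector; both computations agree term by term.
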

\begin{proof}
	Firstly, if $g=\Exp(t\xi)\in AN$, we deduce that $S_{\g{v}}\cdot g=gg^{-1}S_{\g{v}}g=g(g^{-1}S_{\g{v}}g)$ is isometric to $g^{-1}S_{\g{v}}g$ by left translation.
	Thus, it suffices to compute the mean curvature $\tilde{\mathcal{H}}_{t}$ of $\tilde{S}=g^{-1}S_{\g{v}}g$ at $e$.
	To this end, we compute the Lie algebra $\tilde{\g{s}}=\Ad(g^{-1})\g{s}_{\g{v}}$ of $g^{-1}S_{\g{v}}g$.
	Observe that $\Ad(g^{-1})\g{s}_{\g{v}}\subseteq \g{a}\oplus\g{n}$ is orthogonal to $\Ad(g)^{*}\g{v}=e^{-t\ad(\theta\xi)}\g{v}$ with respect to the inner product $\langle \cdot,\cdot \rangle$.
	Given any $\eta\in \g{v}$, we have 
	\begin{align*}
		e^{-t \ad(\theta \xi)}\eta\equiv{}
		&\eta-t [\theta\xi,\eta]\pmod{\theta\g{n}}\\
		={}&\eta-t\langle \xi,\eta \rangle H_{\alpha} - \frac{t}{2}(1+\theta)[\theta\xi,\eta]\pmod{\theta\g{n}} \\
		\equiv{}&\eta-t \langle \xi,\eta \rangle H_{\alpha} \pmod{ \g{k}_{0}\oplus\theta\g{n} }.
	\end{align*}
	Therefore, if we consider an orthonormal basis $\{\xi,\eta\}$ of $\g{v}$, it is immediate that the orthogonal complement of $\Ad(g^{-1})\g{s}_{\g{v}}$ in $\g{a}\oplus\g{n}$ is $\operatorname{span}\{ t H_{\alpha}-\xi,\eta \}$.
	As a consequence, $\tilde{\g{s}}=\Ad(g^{-1})\g{s}_{\g{v}}=\ker\alpha\oplus(\g{n}\ominus\g{v})\oplus\R(H_{\alpha}+t|\alpha|^{2}\xi)$.
	The normal space $\g{s}^{\perp}$ is given by
	$\tilde{\g{s}}^{\perp}=\R\eta \oplus \R (tH_{\alpha}-2\xi)$.
	
	Assume $H\in \ker \alpha$. In this case, we directly have from \eqref{eq:II} that $\II(H,H)=0$.
	
	Now, suppose that $\lambda\in\Sigma^{+}$ and $X\in \g{g}_{\lambda}\ominus \g{v}$ is such that $1=\langle X,X \rangle_{AN}=\frac{1}{2}|X|^{2}$. Then, $\II(X,X)$ satisfies $\langle \II(X,X),\nu \rangle_{AN}=\frac{1}{2}\langle|X|^{2} H_{\lambda},\nu \rangle=\langle H_{\lambda},\nu \rangle=\langle \frac{t\langle \lambda, \alpha \rangle}{2+t^{2}|\alpha|^{2}}(t H_{\alpha}-2\xi),\nu \rangle_ {AN}$ for every $\nu\in \g{s}^{\perp}$, and thus $\II(X,X)=\frac{t\langle \lambda,\alpha \rangle}{2+t^{2}|\alpha|^{2}}(tH_{\alpha}-2\xi)$.
	
	Finally, consider the vector $Y=H_{\alpha}+t|\alpha|^{2}\xi$, whose norm squared is $\langle Y,Y \rangle_{AN}=|\alpha|^{2}+\frac{1}{2}t^{2}|\alpha|^{4}$.
	Note that $(1-\theta)[\theta Y,Y]=2t|\alpha|^{4}(tH_{\alpha}-(1-\theta)\xi)$, so we deduce from \eqref{eq:II} that $\II(Y,Y)=\frac{t|\alpha|^{4}}{2}(tH_{\alpha}-2\xi)$.
	As a consequence, the normalized vector $Z=Y/|Y|_{AN}$ satisfies $\II(Z,Z)=\frac{t|\alpha|^{2}}{2+t^{2}|\alpha|^{2}}(tH_{\alpha}-2\xi)$.
	From these calculations, we obtain that the mean curvature of $\tilde{S}$ at $o$ is given by
	\begin{equation*}
		\begin{split}
			\tilde{\mathcal{H}}_{t}={}&\frac{t}{2+t^{2}|\alpha|^{2}}\bigg( \sum_{\lambda\in \Sigma^{+}\setminus \{\alpha\}} (\dim \g{g}_{\lambda})\langle\lambda,\alpha\rangle + (\dim \g{g}_{\alpha}-1)|\alpha|^{2} \bigg)(tH_{\alpha}-2\xi)\\
			={}&\frac{t}{2+t^{2}|\alpha|^{2}}(\langle 2\delta,\alpha \rangle - |\alpha|^{2})(tH_{\alpha}-2\xi)\\
			={}&\frac{t|\alpha|^{2}}{2+t^{2}|\alpha|^{2}}(\dim \g{g}_{\alpha}+2 \dim \g{g}_{2\alpha} - 1)(tH_{\alpha}-2\xi).
		\end{split}
	\end{equation*}
	Finally, note that the existence of an abelian plane inside $\g{g}_{\alpha}$ implies that the integer $\dim \g{g}_{\alpha}+2\dim \g{g}_{2\alpha}-1$ is positive, so the orbit through $\Exp(t\xi)$ is minimal if and only if $t=0$, as desired. \qedhere
\end{proof}

In particular, the homogeneous foliation induced by $S_{\g{v}}$ is never harmonic independently of the choice of $\g{v}$.
From here, Corollary~\ref{cor:canonicalExtensionRH2} follows immediately.

\begin{corollary}
	No polar homogeneous foliation constructed as in case (\ref{th:main:ker+n-l}) of Theorem~\ref{th:MainTheoremList} is orbit equivalent to a homogeneous foliation given in case (\ref{th:main:a+n-v}).
\end{corollary}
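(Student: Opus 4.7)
The plan is to distinguish the two cases by the extrinsic geometry of their leaves, specifically by the distribution of minimal leaves within the foliation. An orbit equivalence is realized by an isometry of $M$ sending orbits to orbits, so it must send minimal leaves to minimal leaves; hence the ``set of minimal leaves'' is an orbit-equivalence invariant of any homogeneous foliation.

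First I would recall what the previous two propositions give us. In case~(\ref{th:main:ker+n-l}), the Lie algebra $\g{s}_\xi=(\ker\alpha)\oplus(\g{n}\ominus\R\xi)$ is in fact an ideal of $\g{a}\oplus\g{n}$ (the only nontrivial verification is that $[\g{n},\g{n}]\cap\g{g}_\alpha=0$, which holds because $\alpha$ is simple and therefore cannot be written as a sum of two positive roots). Consequently $S_\xi$ is normal in $AN$, so for every $g\in AN$ one has $S_\xi\cdot g=gS_\xi$, and left translation by $g$ is an isometry between $S_\xi$ and $S_\xi\cdot g$. Thus every leaf of the foliation in case~(\ref{th:main:ker+n-l}) is isometrically congruent to the leaf through~$o$, and in particular either every leaf is minimal or none is. By the first of the two preceding propositions, which orbit occurs is governed by the integer $\dim\g{g}_\alpha+2\dim\g{g}_{2\alpha}-1$.

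Next I would record the contrasting behavior of case~(\ref{th:main:a+n-v}). The existence of a $2$-dimensional abelian subspace $\g{v}\subseteq\g{g}_\alpha$ forces $\dim\g{g}_\alpha\geq 2$, hence $\dim\g{g}_\alpha+2\dim\g{g}_{2\alpha}-1\geq 1>0$. The second of the two preceding propositions then shows that for any unit $\xi\in\g{v}$ the orbit of $S_\g{v}$ through $\Exp(t\xi)$ is minimal if and only if $t=0$. In particular, the foliation induced by $S_\g{v}$ possesses at least one minimal leaf (through~$o$) and at least one non-minimal leaf (through $\Exp(t\xi)$ for any $t\neq 0$).

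Finally I would assemble the contradiction. Suppose $\mathcal{F}_1$ is a foliation of type~(\ref{th:main:ker+n-l}) and $\mathcal{F}_2$ a foliation of type~(\ref{th:main:a+n-v}), and assume there is an isometry $\phi$ of $M$ mapping the leaves of $\mathcal{F}_1$ to the leaves of $\mathcal{F}_2$. Then $\phi$ restricts to an isometry on each leaf, hence preserves minimality; therefore the collection of minimal leaves of $\mathcal{F}_1$ is mapped bijectively to the collection of minimal leaves of $\mathcal{F}_2$. But the first collection is either empty or all of $\mathcal{F}_1$, whereas the second is a nontrivial proper subcollection of $\mathcal{F}_2$, a contradiction. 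I do not anticipate a serious obstacle here; the only subtlety is checking the normality of $\g{s}_\xi$ in $\g{a}\oplus\g{n}$ (which uses simplicity of~$\alpha$), and the entire argument rests on already-established mean curvature formulas.
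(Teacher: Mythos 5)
Your proof is correct and follows essentially the same route as the paper, which deduces the corollary immediately from the two preceding mean-curvature propositions: in case~(\ref{th:main:ker+n-l}) all leaves are isometrically congruent (so either every leaf is minimal or none is), while in case~(\ref{th:main:a+n-v}) the positivity of $\dim\g{g}_\alpha+2\dim\g{g}_{2\alpha}-1$ forces the leaf through $o$ to be minimal and the leaf through $\Exp(t\xi)$, $t\neq0$, not to be, and an ambient isometry realizing an orbit equivalence must carry minimal leaves to minimal leaves. No gaps to report.
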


\section{Proof of Theorem~\ref{th:MainTheoremList}}\label{Sec:proof}

Now we prove that the examples appearing in Theorem~\ref{th:MainTheoremList} are the only examples of codimension two homogeneous polar foliations on symmetric spaces of noncompact type.

From~\cite[Proposition~2.2]{BerndtDiazTamaru10} we obtain:

\begin{proposition}\label{th:solvable}
	Let $M$ be a Hadamard manifold, and let $H$ be a connected closed subgroup of the isometry group of $M$ acting on $M$ in such a way that the orbits of $H$ form a foliation.
	Then, all the orbits of $H$ are principal, and there is a connected closed solvable group $S$ acting isometrically on $M$ whose orbits coincide with the orbits of $H$.
\end{proposition}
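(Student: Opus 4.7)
The statement has two parts. For \textbf{principality}, since the orbits form a foliation they all share a common dimension, and for any $p\in M$ Palais' tube theorem realises a neighbourhood of $H\cdot p$ as the associated bundle $H\times_{H_p}V$, with $V=\nu_p(H\cdot p)$ and $H_p$ acting by the slice representation. A nontrivial slice representation would produce nearby orbits of strictly larger dimension (through $\exp_p(v)$ for $v\in V$ outside the fixed set of $H_p$), contradicting the foliation hypothesis. Hence the slice representation is trivial at every point, the tube theorem yields a neighbourhood on which all isotropies are conjugate to $H_p$, and connectedness of $M$ propagates this to all of $M$, so every orbit is principal.

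For the \textbf{construction of $S$}, the strategy is to exploit the nonpositive curvature of $M$ through a fixed point at infinity. Fix $o\in M$ and a unit horizontal vector $\xi_o\in\nu_o(H\cdot o)$; since the slice representation is trivial, $\xi_o$ extends to an $H$-equivariant normal vector field along $H\cdot o$, and the geodesic $\gamma(t)=\exp_o(t\xi_o)$ meets every orbit orthogonally. A Jacobi-field comparison in the Hadamard manifold $M$ shows that for each $h\in H$ the translated horizontal geodesic $h\gamma$ is asymptotic to $\gamma$, so they determine a common boundary point $\zeta\in M(\infty)$ which must be fixed by all of $H$. Inside the stabilizer $G_\zeta$ of $\zeta$ in the full isometry group of $M$ one then identifies a closed connected solvable subgroup $S$ whose orbit through $o$ coincides with $H\cdot o$; by $G_\zeta$-equivariance this equality extends to $S\cdot p=H\cdot p$ for every $p\in M$.

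The hard part is this final extraction of $S$. Two issues must be handled: first, showing that a point at infinity is actually fixed by $H$ (which requires controlling the mutual distance between the asymptotic horizontal geodesics, not automatic in an arbitrary Hadamard manifold and requiring a Jacobi-field argument tailored to the equivariant family along orbits), and then isolating within the isotropy $G_\zeta$ a closed, connected, \emph{solvable} subgroup with precisely the orbits of $H$. In a symmetric space of noncompact type (the case actually used later in the paper) this second task becomes clean: $G_\zeta$ admits a Langlands decomposition $M_\zeta A_\zeta N_\zeta$ with $A_\zeta N_\zeta$ solvable and acting simply transitively on horospheres, so $S$ can be taken as a suitable subgroup of $A_\zeta N_\zeta$ together with any contribution from $H$ transverse to the horospherical direction. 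In the general Hadamard context, pinning down $S$ requires structural results on isotropies at points at infinity of the kind developed in Heintze's work on homogeneous Hadamard manifolds.
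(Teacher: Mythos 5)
There is a genuine gap, and it sits exactly at the point you yourself flag as ``the hard part'': the extraction of the closed connected solvable group $S$ is never carried out, so the proposal is a programme rather than a proof. Worse, the mechanism you propose for producing a fixed point at infinity is incorrect. It is not true that the $H$-translates of a horizontal geodesic are mutually asymptotic: take $M=\R\mathsf{H}^2$ and let $H$ be the one-parameter group of transvections along a geodesic $\sigma$, so that the orbits are $\sigma$ and its equidistant curves (a homogeneous codimension one foliation). The geodesics meeting all orbits orthogonally are precisely the geodesics perpendicular to $\sigma$, and two distinct such geodesics diverge in both directions; they are not asymptotic and do not determine a common point of $M(\infty)$ (the points at infinity fixed by $H$ are the endpoints of $\sigma$, which are endpoints of no normal geodesic). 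So the Jacobi-field step fails precisely in the negatively curved situation the proposition is about, and even granting some fixed point $\zeta\in M(\infty)$, the passage from the stabilizer $G_\zeta$ to a closed connected \emph{solvable} subgroup with exactly the orbits of $H$ is deferred to unproved ``structural results,'' both in the general Hadamard case and, in vaguer form, in the symmetric case. A secondary gap: in the first part, a nontrivial slice representation of the (compact, possibly disconnected) isotropy $H_p$ need not produce nearby orbits of larger dimension --- it can have finite image, giving nearby orbits of the same dimension with strictly smaller isotropy, so equal dimension of orbits alone does not immediately force triviality of the slice representation; ruling out such exceptional orbits needs a further argument.

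For comparison: the paper does not prove this statement at all, but quotes it from \cite[Proposition~2.2]{BerndtDiazTamaru10}, and the argument there is finite-dimensional Lie theory with no boundary at infinity. One writes the connected group $H=KS$ with $K$ a maximal compact subgroup and $S$ a closed connected solvable subgroup (Levi decomposition plus the Iwasawa decomposition of the semisimple factor, the solvable radical being absorbed into $S$), applies Cartan's fixed point theorem to $K$ to find $q\in M$ with $K\cdot q=q$, so that $H\cdot q=SK\cdot q=S\cdot q$, and then uses triviality of the slice representations (all orbits principal) and equivariant normal vector fields --- exactly the mechanism of Lemma~\ref{lemma:OrbitEquivalenceNonClosed} in this paper --- to propagate $S\cdot p=H\cdot p$ to every $p\in M$. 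If you want to complete your write-up, replacing the point-at-infinity construction by this Cartan fixed point argument is the efficient route.
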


Let $M=G/K$ be a symmetric space of noncompact type endowed with the metric induced by the Killing form.
We assume the notation introduced in Subsection~\ref{sec:roots}.
Thus, $K$ is the isotropy group at $o\in M$, we have a Cartan decomposition $\g{g}=\g{k}\oplus\g{p}$, a choice of maximal abelian subspace $\g{a}$ of $\g{p}$ that determines a root space decomposition $\g{g}=\g{g}_0\oplus\bigl(\bigoplus_{\lambda\in\Sigma}\g{g}_\lambda\bigr)$, and a positivity criterion that selects a set of positive roots $\Sigma^+$.
We denote by $\Lambda$ the set of simple roots.
We define $\g{n}=\bigoplus_{\lambda\in\Sigma^+}\g{g}_\lambda$, and recall that $\g{k}_0=\g{g}_0\cap\g{k}$.

Assume that $H$ is a connected closed subgroup of the isometry group $G$ that acts polarly on $M$, and that the orbits of $H$ on $M$ induce a foliation.
Proposition~\ref{th:solvable} says that there exists a solvable subgroup $S$ of $G$ whose orbits coincide with the orbits of $H$.
Let $\g{s}$ be the Lie algebra of $S$.
Then, $\g{s}$ is contained in a Borel subalgebra $\g{b}$ of $\g{g}$.
See Subsection~\ref{sec:Borel} for further details.
The next result states that we may assume that $\g{s}$ is contained in a maximally noncompact Borel subalgebra.

\begin{proposition}\label{th:Borel:tan}
The leaves of a homogeneous polar foliation on $M=G/K$ coincide, up to isometric congruence, with the orbits of a connected closed solvable subgroup $S$ of $G$ whose Lie algebra $\g{s}$ is contained in a maximally noncompact Borel subalgebra of the form $\g{t}\oplus\g{a}\oplus\g{n}$, where $\g{t}\subseteq \g{k}_{0}$ is abelian.
\end{proposition}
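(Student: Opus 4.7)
My plan is to combine Proposition~\ref{th:solvable} and Theorem~\ref{thm:MostowSimpleRoots} with a reduction argument that moves the Borel subalgebra containing $\g{s}$ into maximally noncompact form. First, I would apply Proposition~\ref{th:solvable} to replace the given foliation by the orbit foliation of a connected closed solvable subgroup $S\subseteq G$ with Lie algebra $\g{s}$. By Theorem~\ref{thm:MostowSimpleRoots}, after conjugating $S$ by an isometry of $M$, we may assume $\g{s}$ lies in a Borel subalgebra $\g{b}=\tilde{\g{t}}\oplus\g{a}_{\Phi}\oplus\g{n}_{\Phi}$ for some $\Phi\subseteq\Lambda$, with $\tilde{\g{t}}\subseteq\g{k}_{\Phi}$ abelian. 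When $\Phi=\emptyset$ the conclusion is immediate, so the main work is to reduce to this case.

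For $\Phi\neq\emptyset$, the aim is to produce an orbit-equivalent subgroup whose Lie algebra fits into a maximally noncompact Borel $\g{t}\oplus\g{a}\oplus\g{n}$ with $\g{t}\subseteq\g{k}_{0}$. The key structural observation is that $\tilde{T}\subseteq K_{\Phi}$ fixes $o$ and $A_{\Phi}N_{\Phi}$ is a normal subgroup of $B=\tilde{T}\ltimes A_{\Phi}N_{\Phi}$, so any $s=tan\in S$ with $t\in\tilde{T}$ and $an\in A_{\Phi}N_{\Phi}$ satisfies $s\cdot o=(tant^{-1})\cdot o\in A_{\Phi}N_{\Phi}\cdot o$; consequently the orbit $S\cdot o$ is entirely contained in the $A_{\Phi}N_{\Phi}$-orbit through $o$, which via the simply transitive action of $AN$ on $M$ is identified with a submanifold of $AN$. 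The plan is then to use Lemma~\ref{lemma:OrbitEquivalenceNonClosed} to pass, via a suitable enlargement $\widetilde{S}\supseteq S$ with the same orbit through $o$ and trivial slice representation, to a subgroup whose Lie algebra lies in $\g{t}\oplus\g{a}\oplus\g{n}$; the closure of such an enlargement in $G$ provides the desired closed solvable subgroup.

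The main obstacle is to carry out this replacement while preserving the full orbit foliation, not just the orbit through $o$. The compact piece $\tilde{\g{t}}$ outside of $\g{k}_{0}$ must be absorbed or removed without altering the leaves; this requires exploiting the polarity of the action (via Proposition~\ref{th:criterion}) to show that $\tilde{T}$ acts trivially on the normal space $\g{s}_{\g{p}}^{\perp}$, which is tangent to a section of the polar action, and then using the structure of $\ad(\tilde{\g{t}})$ on the root spaces $\g{g}_{\lambda}$ for $\lambda\in\Sigma_{\Phi}^{+}$ together with the Hadamard geometry of $M$ to realize the modifications concretely. Verifying the trivial slice-representation hypothesis of Lemma~\ref{lemma:OrbitEquivalenceNonClosed} at each step of this modification is the delicate technical point on which the argument rests, and is ultimately what justifies passing from an arbitrary Borel to a maximally noncompact one.
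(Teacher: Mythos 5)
There is a genuine gap: your write-up is a plan that defers exactly the step the proposition requires. After invoking Proposition~\ref{th:solvable} and Theorem~\ref{thm:MostowSimpleRoots} (this part agrees with the paper), you propose to handle $\Phi\neq\emptyset$ by enlarging $S$ to some $\widetilde{S}$, applying Lemma~\ref{lemma:OrbitEquivalenceNonClosed}, and ``absorbing or removing'' the part of $\tilde{\g{t}}$ outside $\g{k}_{0}$ --- but you never define $\widetilde{S}$, never verify that it is solvable, closed, has the same orbits on all of $M$ (Lemma~\ref{lemma:OrbitEquivalenceNonClosed} only compares orbits after the trivial-slice hypothesis is checked for the larger group), and never show its Lie algebra lands in $\g{t}\oplus\g{a}\oplus\g{n}$. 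You yourself flag the slice-representation verification as ``the delicate technical point on which the argument rests,'' which is precisely the content that is missing; as it stands the proposal does not prove the statement.

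Moreover, the intended detour through orbit equivalences is unnecessary, and the decisive computation is absent. Since $\g{s}\subseteq\tilde{\g{t}}\oplus\g{a}_{\Phi}\oplus\g{n}_{\Phi}$ with $\tilde{\g{t}}\subseteq\g{k}$, one has
\[
(\g{a}\ominus\g{a}_{\Phi})\oplus\Bigl(\bigoplus_{\lambda\in\Sigma_{\Phi}^{+}}\g{p}_{\lambda}\Bigr)\subseteq\g{s}_{\g{p}}^{\perp}.
\]
Now polarity enters not through any statement about $\tilde{T}$ acting on the normal space, but through Proposition~\ref{th:criterion}: for $\lambda\in\Sigma_{\Phi}^{+}$ and $X\in\g{g}_{\lambda}$, both $H_{\lambda}$ and $(1-\theta)X$ lie in $\g{s}_{\g{p}}^{\perp}$, so $[H_{\lambda},(1-\theta)X]=(1+\theta)\lvert\lambda\rvert^{2}X$ must be orthogonal to $\g{s}$. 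Hence $\g{s}\perp\g{k}_{\lambda}$ for every $\lambda\in\Sigma_{\Phi}^{+}$, which forces $\g{s}\subseteq(\tilde{\g{t}}\cap\g{k}_{0})\oplus\g{a}_{\Phi}\oplus\g{n}_{\Phi}\subseteq\g{t}\oplus\g{a}\oplus\g{n}$ with $\g{t}=\tilde{\g{t}}\cap\g{k}_{0}$ abelian. In other words, no further modification of $S$, no enlargement, and no appeal to Lemma~\ref{lemma:OrbitEquivalenceNonClosed} are needed beyond the initial conjugation; the polarity criterion alone kills the $\g{k}_{\lambda}$-components of $\g{s}$. Your observation that $S\cdot o\subseteq A_{\Phi}N_{\Phi}\cdot o$ is correct but does not by itself control the Lie algebra of $S$, which is what the proposition asserts.
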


\begin{proof}
By Proposition~\ref{th:solvable} there is a solvable subgroup $S$ of $G$ whose orbits form the homogeneous polar foliation under investigation.
According to Proposition~\ref{thm:MostowSimpleRoots}, we may assume that $\g{s}$ is contained in a maximal solvable subalgebra of the form $\tilde{ \g{t}}\oplus\g{a}_{\Phi}\oplus\g{n}_{\Phi}$, with $\tilde{\g{t}}\subseteq \g{k}_{\Phi}$ an abelian subspace, and $\Phi\subseteq \Lambda$ a subset of simple roots.
In particular, the tangent space of $S\cdot o$ at $o$, as a subspace of $\g{p}$, is contained in $(1-\theta)(\g{a}_{\Phi}\oplus\g{n}_{\Phi})$.
As a consequence,
\[
(\g{a}\ominus\g{a}_{\Phi})\oplus(1-\theta)(\g{n}\ominus\g{n}_{\Phi})
=\Bigl(\bigoplus_{\alpha\in\Phi}\mathbb{R}H_{\alpha}\Bigr)\oplus\Bigl(\bigoplus_{\lambda\in\Sigma_{\Phi}^{+}}\g{p}_{\lambda}\Bigr)\subseteq\g{s}_{\g{p}}^\perp,
\]
where $\g{s}_\g{p}^\perp=\{\xi\in\g{p}:\langle\xi,\g{s}\rangle=0\}$, according to the definition given in Proposition~\ref{th:criterion}.
Let $\lambda\in\Sigma_{\Phi}^{+}$ be arbitrary, and $X\in\g{g}_{\lambda}$. Then, since $H_{\lambda}$,~$(1-\theta)X\in \g{s}_{\g{p}}^{\perp}$, and the action of $S$ is polar, it follows from Proposition~\ref{th:criterion} that $[H_{\lambda},(1-\theta)X]=(1+\theta)\lvert\lambda\rvert^{2}X$ is orthogonal to $\g{s}$. Thus, $\g{s}$ is orthogonal to $\bigoplus_{\lambda\in \Sigma_{\Phi}^+}\g{k}_{\lambda}$, and is therefore contained in $(\tilde{\g{t}}\cap\g{k}_{0})\oplus\g{a}_{\Phi}\oplus\g{n}_{\Phi}\subseteq (\tilde{\g{t}}\cap\g{k}_{0})\oplus\g{a}\oplus\g{n}$, with $\g{t}=\tilde{\g{t}}\cap\g{k}_{0}$ abelian.
\end{proof}

In view of Proposition~\ref{th:Borel:tan}, if $S$ is a closed solvable subgroup of $G$ acting polarly on $M$ and such that its orbits induce a foliation on $M$, we may assume from now on that the Lie algebra $\g{s}$ of $S$ is contained in a Borel subalgebra of the form $\g{t}\oplus\g{a}\oplus\g{n}$, where $\g{t}\subseteq \g{k}_{0}$ is abelian.
From now on we assume that the action of $S$ on $M$ is polar, but not hyperpolar.
Recall from Proposition~\ref{th:criterion} that $\g{s}_\g{p}^\perp$ is a Lie triple system (but not an abelian subspace) and $[\g{s}_\g{p}^{\perp},\g{s}_\g{p}^{\perp}]$ is orthogonal to~$\g{s}$.

Moreover, $[\g{s}_\g{p}^\perp, \g{s}_\g{p}^\perp]\oplus\g{s}_\g{p}^\perp$ is a reductive Lie algebra and the orbit through the origin of the subgroup $S_\g{p}^\perp$ whose Lie algebra is this one is also a symmetric space.
Since it is two-dimensional and not flat, it must be homothetic to a real hyperbolic plane $\R \mathsf{H}^2$.
Because $\R \mathsf{H}^2$ has constant curvature, it follows from~\cite[Chapter IV,~Theorem~4.2]{Helgason} that there exists a constant $C>0$ such that $\ad(\xi)^2(\eta)=C\eta$ for any pair of orthonormal vectors $\xi$, $\eta\in\g{s}_\g{p}^\perp$.
	
Our next step is to prove that $\g{s}_{\g{p}}^{\perp}$ is contained in $\g{a}\oplus\g{p}^{1}$.
We recall that $\g{n}^{1}=\bigoplus_{\alpha\in\Lambda}\g{g}_{\alpha}$ and $\g{p}^{1}=(1-\theta)\g{n}^{1}=\bigoplus_{\alpha\in\Lambda}\g{p}_{\alpha}$.
We consider the vector subspace
\[
\tilde{\g{s}}=\g{s}+(\g{n}\ominus\g{n}^{1})
=\g{s}+ \Bigl(\bigoplus_{\lambda\in \Sigma^{+}\setminus \Lambda}\g{g}_{\lambda}\Bigr).
\]

Since $\g{t}\oplus\g{a}$ normalizes all root spaces and $[\g{n},\g{n}]\subseteq \g{n}\ominus \g{n}^{1}$, it follows that $\tilde{\g{s}}$ is a subalgebra of $\g{t}\oplus\g{a}\oplus\g{n}$ containing $\g{s}$. In particular, $\g{s}_{\g{a}\oplus\g{n}}\subseteq\tilde{\g{s}}_{\g{a}\oplus\g{n}}$, so the codimension of $\tilde{\g{s}}_{\g{a}\oplus\g{n}}$ is less than or equal to two.

\begin{lemma}\label{lemma:projectionImpliesContained}
Let $\g{q}$ be a Lie subalgebra of $\g{t}\oplus\g{a}\oplus\g{n}$ and $\lambda\in \Sigma^{+}$. If $\g{g}_{\lambda}\subseteq \g{q}_{\g{a}\oplus\g{n}}$ and there exists $H\in \g{a}\cap \g{q}_{\g{a}\oplus\g{n}}$ such that $\lambda(H)\neq 0$, then $\g{g}_{\lambda}\subseteq \g{q}$.
\end{lemma}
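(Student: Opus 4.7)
The plan is to exploit the decomposition $\g{q}\subseteq\g{t}\oplus\g{a}\oplus\g{n}$ together with the fact that $\g{t}\subseteq\g{k}_0$ is abelian and centralizes $\g{a}$, in order to lift the elements of $\g{g}_\lambda$ out of $\g{q}_{\g{a}\oplus\g{n}}$ and realize them inside $\g{q}$ itself. The main idea is to use the bracket with a suitable lift of $H$ to kill the $\g{t}$-correction modulo a manageable error.

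First I would observe that since $\g{g}_\lambda\subseteq\g{n}$ is orthogonal to $\g{a}$, the assumption $\g{g}_\lambda\subseteq\g{q}_{\g{a}\oplus\g{n}}$ means that for every $X\in\g{g}_\lambda$ there exists $T_X\in\g{t}$ with $T_X+X\in\g{q}$ (the $\g{a}$-component of any lift must vanish). Analogously, there exists $T_H\in\g{t}$ with $T_H+H\in\g{q}$. Next, using that $\g{t}$ is abelian, that $\g{t}$ centralizes $\g{a}$, and that $\ad(T_H)$ preserves each root space $\g{g}_\lambda$, the bracket simplifies to
\[
[T_H+H,\,T_X+X]=\lambda(H)X+[T_H,X]\in\g{g}_\lambda\cap\g{q}.
\]

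Now I would define the linear endomorphism $\phi\colon\g{g}_\lambda\to\g{g}_\lambda$ by $\phi(X)=\lambda(H)X+[T_H,X]$, i.e.\ $\phi=\lambda(H)\id_{\g{g}_\lambda}+\ad(T_H)|_{\g{g}_\lambda}$. The previous computation shows $\phi(\g{g}_\lambda)\subseteq\g{q}$. The crux of the argument, and the only step requiring a small observation, is that $\phi$ is invertible: since $T_H\in\g{k}$, the operator $\ad(T_H)$ is skew-adjoint with respect to $\langle\cdot,\cdot\rangle$, so its restriction to $\g{g}_\lambda$ has purely imaginary eigenvalues, and hence all eigenvalues of $\phi$ have real part $\lambda(H)\neq 0$. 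Thus $\phi$ is a linear bijection of $\g{g}_\lambda$, and from $\phi(\g{g}_\lambda)\subseteq\g{q}$ we conclude $\g{g}_\lambda\subseteq\g{q}$. There is no real obstacle beyond identifying this skew-adjointness shortcut; the choice of $T_H$ may depend on the component, but since we only need a single lift of $H$, this causes no difficulty.
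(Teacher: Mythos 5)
Your proof is correct and follows essentially the same route as the paper: lift $H$ and $X$ to $\g{q}$ by adding $\g{t}$-components, bracket to land in $\g{g}_\lambda\cap\g{q}$, and invert $\lambda(H)\id_{\g{g}_\lambda}+\ad(T_H)\vert_{\g{g}_\lambda}$ using skew-adjointness of $\ad(T_H)$. No substantive difference from the paper's argument.
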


\begin{proof}
Take $X\in\g{g}_{\lambda}\subseteq\g{q}_{\g{a}\oplus\g{n}}$. Then there exist vectors $T$, $T'\in\g{t}$ such that $T+H$, $T'+X\in\g{q}$. In particular, $\ad(T)X+\lambda(H)X=[T+H,T'+X]\in\g{q}$. This means that the linear map $\ad(T)+\lambda(H)\id_{\g{g}_{\lambda}}$ preserves $\g{g}_{\lambda}$ and carries $\g{g}_{\lambda}$ to $\g{q}$. Since $T\in\g{t}$, the linear transformation $\ad(T)$ is skew-adjoint, so $\ad(T)+\lambda(H)\id_{\g{g}_{\lambda}}$ is a linear isomorphism and it follows that $\g{g}_{\lambda}\subseteq \g{q}$.
\end{proof}

Now we can rule out the possibility that $\tilde{\g{s}}_{\g{a}\oplus\g{n}}$ has codimension zero.
	
\begin{lemma}\label{lemma:extendedProjectionIsNotAN}
$\tilde{\g{s}}_{\g{a}\oplus\g{n}}\neq\g{a}\oplus\g{n}$.
\end{lemma}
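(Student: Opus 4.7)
My plan is to argue by contradiction: assume that $\tilde{\g{s}}_{\g{a}\oplus\g{n}}=\g{a}\oplus\g{n}$, and deduce a contradiction with the polarity of the action of $S$.

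The first step is to use Lemma~\ref{lemma:projectionImpliesContained} to show $\g{n}\subseteq\tilde{\g{s}}$. Since the assumption gives $\g{a}\subseteq\tilde{\g{s}}_{\g{a}\oplus\g{n}}$, for each simple root $\alpha\in\Lambda$ I can choose $H\in\g{a}\cap\tilde{\g{s}}_{\g{a}\oplus\g{n}}$ with $\alpha(H)\neq 0$; as $\g{g}_{\alpha}$ is trivially contained in $\tilde{\g{s}}_{\g{a}\oplus\g{n}}=\g{a}\oplus\g{n}$, the lemma yields $\g{g}_{\alpha}\subseteq\tilde{\g{s}}$. Hence $\g{n}^{1}=\bigoplus_{\alpha\in\Lambda}\g{g}_{\alpha}\subseteq\tilde{\g{s}}$, and combining with $\g{n}\ominus\g{n}^{1}\subseteq\tilde{\g{s}}$ (built into the definition of $\tilde{\g{s}}$) produces $\g{n}\subseteq\tilde{\g{s}}$.

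The second step is to translate this into orthogonality information in $\g{p}$. From $\tilde{\g{s}}=\g{s}+(\g{n}\ominus\g{n}^{1})$ and $\g{n}^{1}\subseteq\tilde{\g{s}}$, the orthogonal projection $\pi\colon\g{t}\oplus\g{a}\oplus\g{n}\to\g{a}\oplus\g{n}^{1}$ (which kills $\g{t}$ and $\g{n}\ominus\g{n}^{1}$) satisfies $\pi(\g{s})=\g{a}\oplus\g{n}^{1}$. Transporting through the linear isomorphism $\g{a}\oplus\g{n}\to\g{p}$, $H+N\mapsto H+\tfrac{1}{2}(1-\theta)N$, this means that $\g{s}_{\g{p}}$ projects onto $\g{a}\oplus\g{p}^{1}$; equivalently, $\g{s}_{\g{p}}^{\perp}\cap(\g{a}\oplus\g{p}^{1})=\{0\}$.

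The third and decisive step is to extract a contradiction from polarity. By Proposition~\ref{th:criterion}, $\g{s}_{\g{p}}^{\perp}$ is a two-dimensional Lie triple system in $\g{p}$ with $[\g{s}_{\g{p}}^{\perp},\g{s}_{\g{p}}^{\perp}]\perp\g{s}$. Since the action is polar but not hyperpolar, $\g{s}_{\g{p}}^{\perp}$ is non-abelian and generates a hyperbolic plane, so any basis $\xi,\eta$ of $\g{s}_{\g{p}}^{\perp}$ satisfies $[\xi,[\xi,\eta]]=c\eta$ for some nonzero $c$. Combined with Step 2, every nonzero $\zeta\in\g{s}_{\g{p}}^{\perp}$ has a nonzero component in $\bigoplus_{k\geq 2}\g{p}^{k}$, so the maximal level $m$ occurring in $\g{s}_{\g{p}}^{\perp}$ satisfies $m\geq 2$. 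The idea is to examine the top-level parts of the identity $[\xi,[\xi,\eta]]=c\eta$ in the gradation $\g{g}=\bigoplus_{k}\g{g}^{k}$, using $[\g{p}^{i},\g{p}^{j}]\subseteq\g{k}^{i+j}\oplus\g{k}^{|i-j|}$ and the orthogonality condition $[\g{s}_{\g{p}}^{\perp},\g{s}_{\g{p}}^{\perp}]\perp\g{s}$, to force a nonzero vector of $\g{s}_{\g{p}}^{\perp}$ into $\g{a}\oplus\g{p}^{1}$, contradicting Step~2.

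The main obstacle is this last step. The Lie triple structure of $\g{s}_{\g{p}}^{\perp}$ (essentially an $\mathfrak{sl}_{2}$-triple) has to be matched against the graded decomposition $\g{g}=\bigoplus_{k}\g{g}^{k}$, and $\g{s}$ itself is accessible only through the abstract surjectivity $\pi(\g{s})=\g{a}\oplus\g{n}^{1}$; in particular, the levels appearing in $\xi,\eta$ need not be a single value, so tracking contributions from cross-terms in $[\xi,[\xi,\eta]]$ across different levels requires care. The cleanest route I envision involves exploiting the orthogonality $[\g{s}_{\g{p}}^{\perp},\g{s}_{\g{p}}^{\perp}]\perp\g{s}$ evaluated on carefully chosen elements of $\g{s}$ that witness the surjectivity onto $\g{a}\oplus\g{n}^{1}$, in order to deduce that the top-level components of $\xi$ and $\eta$ actually belong to $\g{a}\oplus\g{p}^{1}$.
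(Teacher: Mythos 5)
Your Steps 1 and 2 are fine: the application of Lemma~\ref{lemma:projectionImpliesContained} to get $\g{n}\subseteq\tilde{\g{s}}$ is exactly how the paper begins, and your observation that the hypothesis forces $\g{s}_{\g{p}}^{\perp}\cap(\g{a}\oplus\g{p}^{1})=\{0\}$ is a correct (if weaker than needed) reformulation. But Step 3, which is where the contradiction must come from, is not a proof; it is a plan with an acknowledged unresolved obstacle, and the route you sketch (matching the relation $[\xi,[\xi,\eta]]=c\eta$ against the gradation to push a vector of $\g{s}_{\g{p}}^{\perp}$ back into $\g{a}\oplus\g{p}^{1}$) is not established and is genuinely problematic: you only know $\g{s}$ through its projection onto $\g{a}\oplus\g{n}$, the orthogonality $[\g{s}_{\g{p}}^{\perp},\g{s}_{\g{p}}^{\perp}]\perp\g{s}$ involves uncontrolled $\g{t}$-components of elements of $\g{s}$, and nothing in your outline uses the subalgebra structure of $\g{s}$ itself, which is indispensable here.

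The missing idea is to upgrade $\g{n}\subseteq\tilde{\g{s}}=\g{s}+(\g{n}\ominus\g{n}^{1})$ to the much stronger statement $\g{n}\subseteq\g{s}$, and this is done by a descending induction on the level: let $k$ be minimal with $\g{n}^{k+1}\oplus\dots\oplus\g{n}^{m}\subseteq\g{s}$ and suppose $k\geq 1$. Since $\g{n}$ is generated by $\g{n}^{1}$ (indeed $\g{g}^{j+1}=[\g{g}^{1},\g{g}^{j}]$), any root space of level $k$ is spanned by vectors $\ad(X_{1})\cdots\ad(X_{k-1})X_{k}$ with $X_{i}\in\g{n}^{1}$; because $\g{n}\subseteq\g{s}+(\g{n}\ominus\g{n}^{1})$ one may pick $Y_{i}\in\g{n}\ominus\g{n}^{1}$ with $X_{i}+Y_{i}\in\g{s}$, and then $\ad(X_{1}+Y_{1})\cdots\ad(X_{k-1}+Y_{k-1})(X_{k}+Y_{k})\in\g{s}$ agrees with $\ad(X_{1})\cdots\ad(X_{k-1})X_{k}$ modulo $\g{n}^{k+1}\oplus\dots\oplus\g{n}^{m}\subseteq\g{s}$, since $[\g{n}^{r},\g{n}^{s}]\subseteq\g{n}^{r+s}$. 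Hence $\g{n}^{k}\subseteq\g{s}$, contradicting minimality, so $k=0$ and $\g{n}\subseteq\g{s}$. From this the conclusion is immediate and much stronger than your Step 2: $\g{s}_{\g{p}}^{\perp}\subseteq\g{a}$ is abelian, so by Proposition~\ref{th:criterion} the action would be hyperpolar, contradicting the standing assumption. Without some argument of this kind (exploiting that $\g{s}$ is a subalgebra and that $\g{n}^{1}$ generates $\g{n}$), your outline does not close, so as it stands the proposal has a genuine gap.
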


\begin{proof}
Assume that $\tilde{\g{s}}_{\g{a}\oplus\g{n}}=\g{a}\oplus\g{n}$. Both $\g{a}$ and all root spaces corresponding to positive roots are contained in $\tilde{\g{s}}_{\g{a}\oplus\g{n}}$. By Lemma~\ref{lemma:projectionImpliesContained}, it follows that $\g{n}\subseteq \tilde{\g{s}}$.

Let $m$ denote the maximum possible level of a root.
We define $k\in\{0,\dots,m\}$ to be the smallest integer for which $\g{n}^{k+1}\oplus\dots\oplus\g{n}^m\subseteq\g{s}$.
We want to show that $k=0$.
On the contrary, assume that $k\geq 1$.
Let $\lambda\in\Sigma^+$ be a root of level $k$.
As $\g{n}^1$ generates $\g{n}$, the root space $\g{g}_{\lambda}$ is generated by elements of the form
\[
\ad(X_{1})\cdots \ad(X_{k-1})X_{k}, \quad X_{i}\in \g{n}^1.
\]
Since $\g{n}\subseteq \tilde{\g{s}}=\g{s} + (\g{n}\ominus\g{n}^1)$, we can choose $Y_{1},\dots,Y_{k}\in \g{n}\ominus \g{n}^{1}$ such that $X_{i}+Y_{i}\in \g{s}$ for each $i\in\{1,\dots,k\}$.
Hence,
$\ad(X_{1}+Y_{1})\cdots\ad(X_{k-1}+Y_{k-1})(X_{k}+Y_{k})\in\g{s}$.
By using the fact that $[\g{n}^r,\g{n}^s]\subseteq\g{n}^{r+s}$, we have
\begin{multline*}
\ad(X_{1}+Y_{1})\cdots\ad(X_{k-1}+Y_{k-1})(X_{k}+Y_{k})\equiv\\
\ad(X_{1})\cdots \ad(X_{k-1})X_{k} 
\pmod{\g{n}^{k+1}\oplus\dots\oplus\g{n}^m},
\end{multline*}
so we obtain $\ad(X_{1})\cdots\ad(X_{k-1})X_{k}\in \g{s}$. 
This means that $\g{g}_{\lambda}\subseteq \g{s}$, and as a result, $\g{n}^{k}\subseteq \g{s}$, contradicting the definition of $k$.

Therefore, $k=0$ and $\g{n}\subseteq\g{s}$.
In particular, $\g{s}_{\g{p}}^{\perp}\subseteq \g{a}$ must be an abelian subspace, contradicting the fact that our action is not hyperpolar. Thus, the case $\tilde{\g{s}}_{\g{a}\oplus\g{n}}=\g{a}\oplus\g{n}$ is not possible.
\end{proof}

Before analyzing the remaining possibilities for the codimension of $\tilde{\g{s}}_{\g{a}\oplus\g{n}}$ we need the following result.

\begin{lemma}\label{lemma:normalap1}
Assume that $V\in\g{a}\oplus\g{n}$ is nonzero and orthogonal to $\g{s}$. Then:
\begin{enumerate}[\rm (i)]
\item If $V\in \g{a}$, then $\g{s}_\g{p}^{\perp}=\R V\oplus(1-\theta)\R\eta_\alpha$, where $\eta_{\alpha}\in \g{g}_{\alpha}$ is nonzero and $\alpha\in\Lambda$.
Furthermore, $V$ is proportional to $H_{\alpha}$. \label{lemma:normalap1:a}
\item If $V\in \g{g}_{\alpha}$ for $\alpha\in \Lambda$, then $\g{s}_\g{p}^{\perp}=(1-\theta)\bigl(\R V\oplus\R(aH_{\alpha}+\eta_{\alpha})\bigr)$, where $a\in\mathbb{R}$, $\eta_{\alpha}\in\g{g}_{\alpha}\ominus\R V$, and $[V,\eta_\alpha]=0$.\label{lemma:normalap1:ga}
\item If $V=H_{\alpha}+\xi_{\alpha}$, where $\alpha \in \Lambda$ and $\xi_{\alpha}\in \g{g}_\alpha$ is a nonzero vector, and $g=\Exp(-\xi_{\alpha}/\lvert\xi_{\alpha}\rvert^{2})\in N$, then $\Ad(g)\g{s}\subseteq \g{t}\oplus\g{a}\oplus\g{n}$ is orthogonal to $\xi_{\alpha}$.
In particular, $(\Ad(g)\g{s})_\g{p}^{\perp}=(1-\theta)\bigl(\R\xi_{\alpha}\oplus\R(a H_{\alpha} + \eta_{\alpha})\bigr)$, for $a\in \mathbb{R}$ and $\eta_{\alpha}\in\g{g}_{\alpha}\ominus\R\xi_{\alpha}$. Furthermore, $[\xi_\alpha,\eta_\alpha]=0$.\label{lemma:normalap1:aga}
\end{enumerate}
\end{lemma}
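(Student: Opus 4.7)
My plan is to exploit the two structural constraints that polarity places on $\g{s}_{\g{p}}^{\perp}$: it is a 2-dimensional Lie triple system whose orbit through $o$ is a totally geodesic $\R\mathsf{H}^{2}$, giving $\ad(\xi)^{2}\eta=C|\xi|^{2}\eta$ for orthogonal $\xi,\eta\in\g{s}_{\g{p}}^{\perp}$ with a uniform $C>0$; and $[\g{s}_{\g{p}}^{\perp},\g{s}_{\g{p}}^{\perp}]$ is orthogonal to $\g{s}$. In each case I will produce one natural element of $\g{s}_{\g{p}}^{\perp}$ from $V$, then pin down the complementary direction by decomposing it along the restricted root system and imposing these constraints.

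For case (i), $V\in\g{a}\subseteq\g{p}$ is orthogonal to $\g{s}$ by hypothesis, hence $V\in\g{s}_{\g{p}}^{\perp}$. I would choose $W$ spanning $\g{s}_{\g{p}}^{\perp}\ominus\R V$ and write $W=H+\sum_{\lambda\in\Sigma^{+}}(1-\theta)X_{\lambda}$ with $H\in\g{a}$ and $X_{\lambda}\in\g{g}_{\lambda}$. The computation $\ad(V)^{2}W=\sum\lambda(V)^{2}(1-\theta)X_{\lambda}$ has vanishing $\g{a}$-component, so requiring $\ad(V)^{2}W\in\R V+\R W$ forces $H=0$ (otherwise $W$ would commute with $V$ and the action would be hyperpolar) and a common value $c^{2}=\lambda(V)^{2}$ across the support of $W$. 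The 3-dimensional subalgebra $\g{r}=[\g{s}_{\g{p}}^{\perp},\g{s}_{\g{p}}^{\perp}]\oplus\g{s}_{\g{p}}^{\perp}\cong\g{sl}_{2}(\R)$ has $\g{r}\cap\g{a}=\R V$, so expanding $[W,\theta W]\in\R V$ annihilates the cross-terms $[X_{\mu},\theta X_{\nu}]$ for $\mu\neq\nu$ in the support, which reduces that support to a single root $\alpha$, producing $W=(1-\theta)\eta_{\alpha}$. To conclude $\alpha\in\Lambda$, I would combine $\eta_{\alpha}\perp\g{s}\cap\g{g}_{\alpha}$ with the codimension analysis of $\tilde{\g{s}}=\g{s}\oplus(\g{n}\ominus\g{n}^{1})$: if $\alpha$ were not simple then $\g{g}_{\alpha}\subseteq\g{n}\ominus\g{n}^{1}\subseteq\tilde{\g{s}}$, which together with Lemma~\ref{lemma:projectionImpliesContained} ultimately forces $\g{g}_{\alpha}\subseteq\g{s}$, contradicting $\eta_{\alpha}\neq 0$.

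For case (ii), observe that $\theta V\in\g{g}_{-\alpha}$ is orthogonal to $\g{t}\oplus\g{a}\oplus\g{n}\supseteq\g{s}$ because distinct root spaces are orthogonal; hence $(1-\theta)V\in\g{p}\cap\g{s}^{\perp}=\g{s}_{\g{p}}^{\perp}$. Let $W$ span the orthogonal complement inside $\g{s}_{\g{p}}^{\perp}$ and decompose $W=H+\sum(1-\theta)X_{\lambda}$. The operator $\ad((1-\theta)V)$ couples $\g{a}$ with $\g{p}_{\alpha}$ and each $\g{p}_{\lambda}$ with $\g{p}_{\lambda\pm\alpha}$; propagating the eigenvector identity $\ad((1-\theta)V)^{2}W=C|(1-\theta)V|^{2}W$ through the root grading pins down $W\in(1-\theta)(\R H_{\alpha}\oplus\g{g}_{\alpha})$, so $W=(1-\theta)(aH_{\alpha}+\eta_{\alpha})$, and orthogonality $\langle W,(1-\theta)V\rangle=0$ gives $\eta_{\alpha}\in\g{g}_{\alpha}\ominus\R V$. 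For the identity $[V,\eta_{\alpha}]=0$, note $[V,\eta_{\alpha}]\in\g{g}_{2\alpha}$ and that the $\g{k}_{2\alpha}$-component of $[(1-\theta)V,(1-\theta)\eta_{\alpha}]$ equals $(1+\theta)[V,\eta_{\alpha}]$; polarity combined with $\g{g}_{2\alpha}\subseteq\g{n}\ominus\g{n}^{1}\subseteq\tilde{\g{s}}$ and Lemma~\ref{lemma:projectionImpliesContained} forces this bracket to vanish.

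Case (iii) reduces to case (ii) through a direct computation of $\Ad(g)^{*}\xi_{\alpha}$. Using $\ad(\theta\xi_{\alpha})\xi_{\alpha}=|\xi_{\alpha}|^{2}H_{\alpha}$, $\ad(\theta\xi_{\alpha})^{2}\xi_{\alpha}=|\xi_{\alpha}|^{2}|\alpha|^{2}\theta\xi_{\alpha}$, and $\ad(\theta\xi_{\alpha})^{3}\xi_{\alpha}=0$, the identity $\Ad(\Exp(X))^{*}=e^{-\ad(\theta X)}$ yields
\[
\Ad(g)^{*}\xi_{\alpha}=e^{\ad(\theta\xi_{\alpha})/|\xi_{\alpha}|^{2}}\xi_{\alpha}=\xi_{\alpha}+H_{\alpha}+\tfrac{|\alpha|^{2}}{2|\xi_{\alpha}|^{2}}\theta\xi_{\alpha}=V+\tfrac{|\alpha|^{2}}{2|\xi_{\alpha}|^{2}}\theta\xi_{\alpha},
\]
with $V\in\g{s}^{\perp}$ by hypothesis and $\theta\xi_{\alpha}$ orthogonal to $\g{t}\oplus\g{a}\oplus\g{n}\supseteq\g{s}$. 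Hence $\xi_{\alpha}\perp\Ad(g)\g{s}$; since $g\in N$ preserves $\g{t}\oplus\g{a}\oplus\g{n}$, case (ii) applied to $\Ad(g)\g{s}$ with $\xi_{\alpha}$ in place of $V$ delivers the claimed form. The main obstacle is the root-space bookkeeping in case (ii), in particular ruling out cross-root support in $W$ and deriving $[V,\eta_{\alpha}]=0$; both depend on the rigidity supplied by the codimension analysis of $\tilde{\g{s}}$ and Lemma~\ref{lemma:projectionImpliesContained} to locate precisely which root spaces must lie in $\g{s}$.
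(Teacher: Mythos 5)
Your reduction of case (iii) to case (ii) via $\Ad(g)^{*}\xi_{\alpha}=V+\tfrac{\lvert\alpha\rvert^{2}}{2\lvert\xi_{\alpha}\rvert^{2}}\theta\xi_{\alpha}$ is correct and is essentially the paper's argument. The problem is that in cases (i) and (ii) the tools you actually invoke do not carry the decisive steps; the paper's proof needs the interaction with $\g{s}$ itself (lifting prescribed vectors of $\g{s}_{\g{a}\oplus\g{n}}$ to elements $T+X\in\g{s}$ with $T\in\g{t}$, using skew-adjointness of $\ad(T)$, and the orthogonality of $[\g{s}_\g{p}^{\perp},\g{s}_\g{p}^{\perp}]$ to $\g{s}$), and your sketch replaces this by curvature identities that are too weak. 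Concretely, in (i) the step ``$[W,\theta W]\in\R V$ annihilates the cross-terms'' is vacuous: $W\in\g{p}$, so $[W,\theta W]=0$ identically; and the honest versions of your constraint ($\ad(V)^{2}W=CW$, $\ad(W)^{2}V\propto V$, $[W,[V,W]]\in\g{s}_\g{p}^{\perp}$) do not force single-root support. For instance $W=(1-\theta)(X_{\mu}+X_{\nu})$ with $\mu$, $\nu$ strongly orthogonal and $\mu(V)=\nu(V)$ satisfies all of them, since every cross bracket either vanishes or cancels; the paper excludes such configurations only by producing $T+H_{\mu-\nu}$ and $T'+\eta_{\mu}+a\eta_{\nu}$ in $\g{s}$, bracketing them, and reading off a contradiction from a determinant. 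Likewise your argument that the surviving root is simple does not go through: Lemma~\ref{lemma:projectionImpliesContained} needs $\g{g}_{\mu}\subseteq\g{s}_{\g{a}\oplus\g{n}}$, which is exactly what fails (only $\g{g}_{\mu}\subseteq\tilde{\g{s}}$ holds, and that is true by construction and says nothing about $\g{s}$); the paper instead writes $\g{g}_{\mu}=[\g{g}_{\beta},\g{g}_{\gamma}]$ and lifts generators of $\g{g}_{\beta}$, $\g{g}_{\gamma}$ into $\g{s}$.

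In (ii) the central claim that propagating $\ad((1-\theta)V)^{2}W=C\lvert(1-\theta)V\rvert^{2}W$ through the root grading ``pins down'' $W\in(1-\theta)(\R H_{\alpha}\oplus\g{g}_{\alpha})$ is false as stated: take $W=(1-\theta)\eta_{2\alpha}$ with $\eta_{2\alpha}\in\g{g}_{2\alpha}$; together with $(1-\theta)V$ this spans a totally real plane in the rank-one boundary component, which is a Lie triple system of constant negative curvature, so the eigenvalue identity holds and nothing in your argument excludes it. Ruling out $\mu=2\alpha$ (and every other $\mu\neq\alpha$, as well as the shape of the $\g{a}$-component) is precisely where the paper uses polarity relative to $\g{s}$, the lifted elements $T+H$, $T'+\eta_{\mu}+b\eta_{\lambda}\in\g{s}$, and case distinctions on $a$. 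Finally, your derivation of $[V,\eta_{\alpha}]=0$ via ``$\g{g}_{2\alpha}\subseteq\g{n}\ominus\g{n}^{1}\subseteq\tilde{\g{s}}$ plus Lemma~\ref{lemma:projectionImpliesContained}'' has the same misapplication of the lemma and, in addition, only has a chance when the $\g{a}$-component of $\eta$ vanishes: when $a\neq 0$ one has $\g{a}\cap\g{s}_{\g{a}\oplus\g{n}}=\ker\alpha$, on which $2\alpha$ vanishes, so no admissible $H$ exists and the lemma cannot be invoked; the paper needs a separate computation there, pairing the triple bracket $[\eta,[\eta,(1-\theta)V]]$ against $\g{g}_{2\alpha}$. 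These are genuine gaps, not just compressed bookkeeping.
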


\begin{proof}
We prove~(\ref{lemma:normalap1:a}).
Assume $V \in \g{a}$.
Since $\g{a}\subseteq \g{p}$, this means $V\in \g{s}_{\g{p}}^{\perp}$.
Choose any unit vector $\eta=\eta_{0}+\sum_{\lambda\in\Sigma^{+}}(1-\theta)\eta_{\lambda}\in \g{s}_{\g{p}}^{\perp}$ orthogonal to $V$, where $\eta_{0}\in \g{a}$ and $\eta_{\lambda}\in \g{g}_{\lambda}$ for each $\lambda\in \Sigma^{+}$.
Since the action of $S$ is polar nonhyperpolar, $[V,\eta]$ is a nonzero vector orthogonal to $\g{s}$. Note that
\[
[V,\eta]=(1+\theta)\sum_{\lambda\in\Sigma^{+}}\lambda(V)\eta_{\lambda},
\]
and recalling that $\theta\g{n}$ is orthogonal to $\g{s}$, we obtain
\[
(1-\theta)\sum_{\lambda\in\Sigma^{+}}\lambda(V)\eta_{\lambda}\in \g{s}_{\g{p}}^{\perp}\ominus \R V=\R \eta,
\]
so $\eta_{0}=0$ and $\lambda(V)=\mu(V)$ for every pair of roots $\lambda$, $\mu\in\Sigma^{+}$ such that $\eta_{\lambda},\eta_{\mu}\neq 0$.

Suppose $\mu$, $\nu\in\Sigma^{+}$ are two different roots such that $\eta_{\mu},\eta_{\nu}\neq 0$.
Hence, $\langle H_{\mu-\nu},V \rangle=\mu(V)-\nu(V)=0$ and $\langle H_{\mu-\nu},\eta \rangle=0$, so $H_{\mu-\nu}\in\g{s}_{\g{a}\oplus\g{n}}$.
Therefore, we may choose a $T\in \g{t}$ such that $T+H_{\mu-\nu}\in \g{s}$.
An analogous argument shows that $\eta_{\mu}+a\eta_{\nu}\in \g{s}_{\g{a}\oplus\g{n}}$ for $a=-\lvert\eta_{\mu}\rvert^{2}/\lvert\eta_{\nu}\rvert^{2}<0$, so there is a vector $T'\in \g{t}$ such that $T'+\eta_{\mu}+a\eta_{\nu}\in \g{s}$.
Hence,
\[
[T,\eta_{\mu}]+a[T,\eta_{\nu}]+\langle\mu-\nu,\mu\rangle\eta_{\mu}+a\langle\mu-\nu,\nu\rangle\eta_{\nu}
=[T+H_{\mu-\nu},T'+\eta_{\mu}+a\eta_{\nu}]\in\g{s}\cap\g{n}.
\]
Since $\ad(T)$ is skew-adjoint (because $T\in \g{k}$) we deduce $[T,\eta_{\mu}]$, $[T,\eta_{\nu}]\in\g{s}_{\g{a}\oplus\g{n}}$. 
Thus $\langle\mu-\nu,\mu\rangle\eta_{\mu}+a\langle\mu-\nu,\nu\rangle\eta_{\nu}$ is also in $\g{s}_{\g{a}\oplus\g{n}}$. Observe that
\[
\begin{vmatrix}
1 & a \\
\langle\mu-\nu,\mu\rangle & a\langle\mu-\nu,\nu\rangle
\end{vmatrix}
=-a\lvert\mu-\nu\rvert^{2}>0,
\]
which implies that $\eta_{\mu}+a\eta_{\nu}$ and $\langle\mu-\nu,\mu\rangle\eta_{\mu}+a\langle\mu-\nu,\nu\rangle\eta_{\nu}$ are linearly independent vectors in $\g{s}_{\g{a}\oplus\g{n}}$.
Therefore, $\eta_{\mu}$, $\eta_{\nu}\in\g{s}_{\g{a}\oplus\g{n}}$ must be orthogonal to $\eta$, contradicting the fact that they are nonzero.
We thus obtain that only one of the $\eta_{\lambda}$ can be nonzero, that is, $\eta=(1-\theta)\eta_{\mu}\in \g{p}_{\mu}$ for some $\mu\in \Sigma^{+}$.

We now prove that $\mu$ is simple.
If $\mu=\beta+\gamma$ were a sum of positive roots $\beta$, $\gamma\in \Sigma^{+}$, then $\eta_{\mu}\in \g{g}_{\mu}=[\g{g}_{\beta},\g{g}_{\gamma}]$, so we may write $\eta_{\nu}=\sum_{i=1}^{k}[X_{i},Y_{i}]$, where $X_{i}\in \g{g}_{\beta}$ and $Y_{i}\in \g{g}_{\gamma}$ for each $i$.
Clearly, $\g{g}_{\beta}+\g{g}_{\gamma}\subseteq \g{s}_{\g{a}\oplus\g{n}}$, which means that for each $i$ there are vectors $T_{i}$, $T_{i}'\in \g{t}$ such that $T_{i}+X_{i}$, $T_{i}'+Y_{i}\in \g{s}$.
As a consequence,
\[
\sum_{i=1}^{k}[T_{i},Y_{i}]+[X_{i},T_{i}']+[X_{i},Y_{i}]
=\sum_{i=1}^{k}[T_{i}+X_{i},T_{i}'+Y_{i}]\in\g{s}.
\]
Note that each $[T_{i},Y_{i}]$ is in $\g{g}_{\gamma}\subseteq \g{s}_{\g{a}\oplus\g{n}}$ and each $[X_{i},T_{i}']$ is in $\g{g}_{\beta}\subseteq \g{s}_{\g{a}\oplus\g{n}}$, which implies that $\eta_{\mu}=\sum_{i}[X_{i},Y_{i}]\in \g{s}_{\g{a}\oplus\g{n}}$, contradicting that $\eta_{\mu}$ is nonzero. We deduce that $\mu\in \Lambda$, so we may write $\g{s}_{\g{p}}^{\perp}=\R V\oplus (1-\theta)\eta_{\alpha}$, where $\alpha=\mu\in \Lambda$.

As $V$ and $(1-\theta)\eta_{\alpha}$ are orthogonal, we know that the vector $\ad((1-\theta)\eta_{\alpha})^{2}V$ is nonzero and proportional to $V$.
Taking the inner product with an arbitrary vector $H\in \g{a}$, we see that
\[
	\begin{aligned}
		\langle \ad((1-\theta)\eta_{\alpha})^{2}V,H \rangle ={}&\langle [(1-\theta)\eta_{\alpha},V],[(1-\theta)\eta_{\alpha},H] \rangle \\
		={}& \langle (1+\theta)\alpha(V)\eta_{\alpha},(1+\theta)\alpha(H)\eta_{\alpha} \rangle=2\lvert \eta_{\alpha}\rvert^{2}\alpha(V)\alpha(H) \\
		={}&\langle 2 \lvert \eta_{\alpha}\rvert^{2}\alpha(V)H_{\alpha},H \rangle,
	\end{aligned}
\]
which means that $V$ is proportional to $\ad((1-\theta)\eta_{\alpha})^{2}V= 2 \lvert \eta_{\alpha}\rvert^{2}\alpha(V)H_{\alpha}$ (this vector is nonzero due to the action not being hyperpolar), and thus to $H_{\alpha}$.
This proves the first assertion.

Now we prove~(\ref{lemma:normalap1:ga}).
Assume $V \in \g{g}_{\alpha}$ for a simple root $\alpha\in\Lambda$.
Choose a nonzero vector $\eta=\eta_{0}+\sum_{\lambda\in\Sigma^{+}}(1-\theta)\eta_{\lambda}\in\g{s}_{\g{p}}^{\perp}$, where $\eta_{0}\in\g{a}$ and $\eta_{\lambda}\in \g{g}_{\lambda}$ for each $\lambda\in \Sigma^{+}$, such that $\langle V,\eta \rangle=\langle V,\eta_{\alpha} \rangle=0$.
We may assume that $\eta$ is not in $\g{a}$.
Indeed, if $\eta\in\g{a}$, then the we are in the conditions of item~(\ref{lemma:normalap1:a}), and we directly obtain that $\eta$ is proportional to $H_{\alpha}$, proving our claim.
Let $\mu\in \Sigma^{+}$ be a positive root with $\eta_{\mu}\neq 0$.

For now, let us suppose that we can choose $\mu$ so that $\mu\neq \alpha$.

We first prove that $\eta_{0}\in \R H_{\mu}$.
Assume otherwise, so there exists a vector $H\in\g{a}$ such that $\langle H,\eta_{0} \rangle=0$ and $\langle H,H_{\mu}\rangle=\mu(H)\neq 0$.
Then $H\in \g{s}_{\g{a}\oplus\g{n}}$, so there exists $T\in \g{t}$ such that $T+H\in \g{s}$.
On the other hand, $\eta_{0}+a\eta_{\mu}$ is orthogonal to both $V$ and $\eta$ for $a=-\lvert\eta_{0}\rvert^{2}/\lvert\eta_{\mu}\rvert^{2}<0$, which implies that $\eta_{0}+a\eta_{\mu}\in \g{s}_{\g{a}\oplus\g{n}}$, and there exists $T'\in \g{t}$ such that $T'+\eta_{0}+a \eta_{\mu}\in \g{s}$.
In particular, $a[T,\eta_{\mu}]+a\mu(H)\eta_{\mu}=[T+H,T'+\eta_{0}+a\eta_{\mu}]\in\g{s}$, 
so $0=\langle a[T,\eta_{\mu}]+a\mu(H)\eta_{\mu},\eta\rangle=a\mu(H)\lvert\eta_{\mu}\rvert^{2}$, a contradiction.

We now prove that $\eta_{\lambda}=0$ for every $\lambda\in\Sigma^{+}\setminus\{\mu\}$.
If $\lambda$ is a positive root linearly independent with $\mu$, and $\eta_{\lambda}\neq 0$, we may find $H\in \g{a}$ such that $\mu(H)=0\neq \lambda(H)$. Since $\eta_{0}\in \R H_{\mu}$, this implies that $H\in \g{s}_{\g{a}\oplus\g{n}}$, and hence, there exists $T\in \g{t}$ such that $T+H\in \g{s}$.
Furthermore, $\eta_{\mu}+b\eta_{\lambda}\in\g{s}_{\g{a}\oplus\g{n}}$ with $b=-\lvert\eta_{\mu}\rvert^{2}/\lvert\eta_{\lambda}\rvert^{2}<0$, and there exists $T'\in \g{t}$ satisfying $T'+\eta_{\mu}+b\eta_{\lambda}\in\g{s}$.
As a consequence, $[T,\eta_{\mu}]+b[T,\eta_{\lambda}]+b\lambda(H)\eta_{\lambda}=[T+H,T'+\eta_{\mu}+b \eta_{\lambda}]\in \g{s}$.
In particular, $0=\langle [T,\eta_{\mu}]+b[T,\eta_{\lambda}]+b\lambda(H)\eta_{\lambda},\eta \rangle=b\lambda(H)\lvert\eta_{\lambda}\rvert^{2}$, a contradiction.
On the other hand, if $2\mu\in \Sigma^{+}$ and $\eta_{2\mu}\neq 0$, a similar argument yields that there is $T\in \g{t}$ such that $T+H_{\mu}+a \eta_{\mu}\in\g{s}$, for $a=-\mu(\eta_{0})/\lvert\eta_{\mu}\rvert^{2}$, and $T'\in \g{t}$ such that $T'+\eta_{\mu}+b\eta_{2\mu}\in \g{s}$, where $b=-\lvert\eta_{\mu}\rvert^{2}/\lvert\eta_{2\mu}\rvert^{2}<0$.
Thus,
\begin{align*}
0&{}=\langle [T+H_{\mu}+a \eta_{\mu},T'+\eta_{\mu}+b\eta_{2\mu}],\eta\rangle\\
&{}=
\langle [T,\eta_{\mu}]+b[T,\eta_{2\mu}]+\lvert\mu\rvert^{2}\eta_{\mu}+2b\lvert\mu\rvert^{2}\eta_{2\mu}+a[\eta_{\mu},T'], \eta\rangle=-\lvert\mu\rvert^{2}\lvert\eta_{\mu}\rvert^{2},
\end{align*}
contradicting our choice of $\mu$. This implies $\eta_{2\mu}=0$ (and an analogous argument shows that $\eta_{\mu/2}=0$ in the case that $\mu/2 \in \Sigma^{+}$).

To summarize, we have obtained $\eta=a H_{\mu}+(1-\theta)\eta_{\mu}$ for some constant $a\in \R$, and $\mu$ is a root different from $\alpha$ with $\eta_{\mu}\neq 0$.

Assume $a\neq 0$. Since $\g{s}_\g{p}^\perp$ is a Lie triple system, $\ad((1-\theta)V)^2\eta$ is proportional to $\eta$.
Since for any $H\in\g{a}$ we have
\begin{align*}
\langle\ad((1-\theta)V)^2\eta,H\rangle
&{}=-\langle[(1-\theta)V,\eta],[H,(1-\theta)V]\rangle\\
&{}=-\langle(1+\theta)\bigl( -a\langle \alpha,\mu\rangle V + [V,\eta_{\mu}]-[\theta V, \eta_{\mu}] \bigr),\alpha(H)(1+\theta)V\rangle\\
&{}=2a\langle\alpha,\mu\rangle\lvert V\rvert^{2}\alpha(H)
=\langle 2a\langle\alpha,\mu\rangle \lvert V\rvert^{2} H_\alpha, H\rangle,
\end{align*}
it follows that $\mu=\alpha$ or $\mu=2\alpha$.
This last case is not possible, because by Proposition~\ref{th:criterion}, $(1+\theta)(-2a\lvert\alpha\rvert^{2}V-[\theta V,\eta_{2\alpha}])=[(1-\theta)V,aH_{2 \alpha}+(1-\theta)\eta_{2\alpha}]$ would be orthogonal to $\g{s}$.
This would imply that $[\theta V,\eta_{2\alpha}]$ is proportional to $V$, and thus,
\[
0=[V,[\theta V,\eta_{2 \alpha}]]=-[\theta V,[\eta_{2 \alpha},V]]-[\eta_{2 \alpha},[V,\theta V]]
=-[\lvert V\rvert^{2}H_{\alpha},\eta_{2 \alpha}]=-2\lvert\alpha\rvert^{2}\lvert V\rvert^{2}\eta_{2 \alpha},
\]
contradicting the fact that $\eta_{2\alpha}\neq 0$. We conclude that $\mu=\alpha$.

Suppose now that $a=0$, so $\eta\in \g{p}_{\mu}$.
Since $\g{a},\g{g}_{\alpha+\mu}\subseteq\g{s}_{\g{a}\oplus\g{n}}$, we obtain $\g{g}_{\alpha+\mu}\subseteq \g{s}$ by Lemma~\ref{lemma:projectionImpliesContained}.
The vector $[(1-\theta)V,(1-\theta)\eta_{\mu}]=(1+\theta)([V,\eta_{\mu}]-[\theta V,\eta_{\mu}])$ is nonzero and orthogonal to $\g{s}$. Combining this with the fact that $\g{g}_{\alpha+\mu}\subseteq \g{s}$, we get $[V,\eta_{\mu}]=0$, so $(1+\theta)[\theta V,\eta_{\mu}]$ is nonzero and orthogonal to $\g{s}$.
This now implies $\mu-\alpha\in\Sigma^+$ and $[\theta V,\eta_{\mu}]\in (\g{a}\oplus\g{n})\ominus\g{s}$.
Furthermore, we must have $\mu=2\alpha$, and thus,
$[\theta V,\eta_{\mu}]\in ( (\g{a}\oplus\g{n})\ominus\g{s})\cap \g{g}_{\alpha}=\mathbb{R}V$. Therefore,
\[
0=[V,[\theta V,\eta_{2\alpha}]]=-[\eta_{2\alpha},[V,\theta V]]=-2\lvert \alpha\rvert^{2}\lvert V\rvert^{2}\eta_{2\alpha},
\]
which yields a contradiction.

We now assume $\eta_{\mu}= 0$ for every $\mu\in\Sigma^{+}\setminus \{\alpha\}$.
As a consequence, $\eta=\eta_{0}+(1-\theta)\eta_{\alpha}$, with $\langle V, \eta_{\alpha}\rangle = 0$.
We only need to prove that $\eta_{0}\in\mathbb{R}H_{\alpha}$.
Indeed, if $\eta_{0}$ is not proportional to $H_{\alpha}$, there exists $H\in\g{a}$ such that $\langle H, \eta_{0}\rangle = 0$ and $\alpha(H)\neq 0$.
Therefore, $H\in \g{s}_{\g{a}\oplus\g{n}}$, so we may find $T\in \g{t}$ satisfying $T+H\in\g{s}$.
Similarly, by taking $\eta_{0}+x\eta_{\alpha}$ with $x=-\lvert\eta_{0}\rvert^{2}/\lvert\eta_{\alpha}\rvert^{2}<0$, we obtain $\eta_{0}+x\eta_{\alpha}\in \g{s}_{\g{a}\oplus\g{n}}$, so $T'+\eta_{0}+ x \eta_{\alpha}\in \g{s}$ for an adequate $T'\in \g{t}$.
Thus,
$0=\langle[T+H,T'+\eta_{0}+ x \eta_{\alpha}],\eta\rangle=x\alpha(H)\lvert\eta_\alpha\rvert^2$, contradiction.
Hence, we may write $\eta=a H_\alpha+(1-\theta)\eta_\alpha$ with $a\in\R$, $\eta_\alpha\in\g{g}_\alpha\ominus\R V$, and $\g{s}_\g{p}^\perp=\R(1-\theta)V\oplus\R\eta$.

If $a=0$, then note that $\g{a}$, $\g{g}_{2\alpha}\subseteq\g{s}_{\g{a}\oplus\g{n}}$, and Lemma~\ref{lemma:projectionImpliesContained} implies $\g{g}_{2\alpha}\subseteq \g{s}$.
Together with the fact that $[(1-\theta)V,(1-\theta)\eta_\alpha]=(1+\theta)([V,\eta_\alpha]-[\theta V,\eta_\alpha])$ is orthogonal to $\g{s}$ by Proposition~\ref{th:criterion}, we get $[V,\eta_{\alpha}]=0$.

If $a\neq0$, we can take the triple bracket $[\eta,[\eta,(1-\theta)V]]$, which is in $\g{s}_{\g{p}}^{\perp}\subseteq\g{a}\oplus\g{p}^{1}$.
Then, for any $X\in\g{g}_{2\alpha}$:
\begin{align*}
0
&{}=\langle [\eta,[\eta,(1-\theta) V]],X \rangle
=\langle [\eta,(1-\theta)V], [\eta,X]] \rangle \\
&{}=\langle(1+\theta)(a\lvert\alpha\rvert^2 V+[\eta_\alpha,V]-[\theta\eta_\alpha,V]),2a\lvert\alpha\rvert^2 X-[\theta\eta_\alpha,X]\rangle\\
&{}=-a\lvert\alpha\rvert^2\langle V,[\theta\eta_\alpha,X]\rangle+2a\lvert\alpha\rvert^2\langle[\eta_\alpha,V],X\rangle
=-3a\lvert\alpha\rvert^2\langle[V,\eta_\alpha],X\rangle,
\end{align*}
and this yields $[V,\eta_{\alpha}]=0$, as stated.
This finishes the proof of~(\ref{lemma:normalap1:ga}).

To prove~(\ref{lemma:normalap1:aga}), assume $V=H_{\alpha}+\xi_{\alpha}$ for a nonzero $\xi_{\alpha}\in \g{g}_{\alpha}$, where $\alpha\in\Lambda$, and consider $g=\Exp(-\xi_{\alpha}/\lvert\xi_{\alpha}\rvert^{2})\in N$.
Then, the isomorphism $\Ad(g)$ preserves the subalgebra $\g{t}\oplus\g{a}\oplus\g{n}$, and therefore, $\Ad(g)\g{s}\subseteq \g{t}\oplus \g{a}\oplus \g{n}$ is the Lie algebra of $gSg^{-1}$, which induces a homogeneous polar foliation on $M$ with a non flat section. Observe that
\[
\Ad(g^{-1})^{*}(H_{\alpha}+\xi_{\alpha})=
e^{-\frac{1}{\lvert\xi_{\alpha}\rvert^{2}}\ad(\theta\xi_{\alpha})}(H_{\alpha}+\xi_{\alpha})
=\xi_{\alpha}-\frac{\lvert\alpha\rvert^{2}}{2\lvert\xi_{\alpha}\rvert^{2}}\theta\xi_{\alpha},
\]
which means that $\Ad(g)\g{s}\subseteq \g{t}\oplus\g{a}\oplus\g{n}$ is orthogonal to $\xi_{\alpha}$, as desired. The rest of the assertion follows from~(\ref{lemma:normalap1:ga}).
\end{proof}

Now we continue with the proof of Theorem~\ref{th:MainTheoremList}.
Recall that $\tilde{\g{s}}=\g{s}+(\g{n}\ominus\g{n}^1)$.
According to Lemma~\ref{lemma:extendedProjectionIsNotAN}, $\tilde{\g{s}}_{\g{a}\oplus\g{n}}\neq\g{a}\oplus\g{n}$, so either $\tilde{\g{s}}_{\g{a}\oplus\g{n}}$ has codimension one or two in $\g{a}\oplus\g{n}$.

If $\tilde{\g{s}}_{\g{a}\oplus\g{n}}$ has codimension two, we have $\tilde{\g{s}}_{\g{a}\oplus\g{n}}=\g{s}_{\g{a}\oplus\g{n}}$, which is equivalent to $\g{s}_\g{p}^\perp\subseteq\g{a}\oplus\g{p}^1$.

On the other hand, if $\tilde{\g{s}}_{\g{a}\oplus\g{n}}$ is a codimension one subspace of $\g{a}\oplus\g{n}$, then by~\cite[Proposition 5.4]{BerndtTamaruC1Foliations} we have $\tilde{\g{s}}_{\g{a}\oplus\g{n}}=(\g{a}\oplus\g{n})\ominus\mathbb{R}\xi$, where $\xi$ satisfies one of the following possibilities:\footnote{Note that Berndt and Tamaru's proof does not rely on the additional condition that they impose on $\g{s}$, namely, that $\g{s}\cap\g{t}=0$.}
\begin{enumerate}[\rm (i)]
\item $\xi\in\g{a}$.
\item $\xi\in \g{g}_{\alpha}$ for a simple root $\alpha\in\Lambda$.
\item $\xi = H_{\alpha}+\xi_{\alpha}$, where $\xi_{\alpha}\in \g{g}_{\alpha}$ is a nonzero vector and $\alpha\in\Lambda$.
\end{enumerate}
Note that $\xi$ is also orthogonal to $\g{s}$.
Hence, by Lemma~\ref{lemma:normalap1}, we obtain that $\g{s}_{\g{p}}^{\perp}$ may be assumed to be in $\g{a}\oplus\g{p}^{1}$ after conjugation by an element of $N$. 
The next step is to determine the orthogonal projection $\g{s}_{\g{a}\oplus\g{n}}$.
	
\begin{lemma}\label{lemma:normalvectorinp1}
The action of $S$ is orbit equivalent to the action of a connected closed subgroup $\bar{S}$ whose Lie algebra $\bar{\g{s}}$ is contained in $\g{t}\oplus\g{a}\oplus\g{n}$, the normal space $\bar{\g{s}}_{\g{p}}^{\perp}$ is contained in $\g{a}\oplus\g{p}^{1}$, and $\bar{\g{s}}_{\g{p}}^{\perp}\cap \g{p}^{1}\neq 0$. Equivalently, the orthogonal projection of $\bar{\g{s}}_{\g{p}}^{\perp}$ on $\g{a}$ is not two-dimensional.
\end{lemma}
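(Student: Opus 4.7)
The plan is to split the argument according to the codimension of $\tilde{\g{s}}_{\g{a}\oplus\g{n}}$ in $\g{a}\oplus\g{n}$, which by Lemma~\ref{lemma:extendedProjectionIsNotAN} is either $1$ or $2$. In the codimension one case, there exists a nonzero $\xi\in \g{a}\oplus\g{n}$ orthogonal to $\g{s}$, of one of the three forms listed prior to this lemma. I would invoke Lemma~\ref{lemma:normalap1}: in the first two subforms directly, and in the third subform $\xi=H_\alpha+\xi_\alpha$ after conjugating by $g=\Exp(-\xi_\alpha/\lvert\xi_\alpha\rvert^{2})\in N$. In every case the explicit description of $\g{s}_\g{p}^{\perp}$ (or of the conjugate) supplied by that lemma contains a nonzero vector in $\g{p}^{1}$, namely $(1-\theta)\eta_\alpha$ in subform~(i) or $(1-\theta)\xi_\alpha$ in subforms~(ii)--(iii), so I may take $\bar{S}=S$ or $\bar{S}=gSg^{-1}$.

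In the codimension two case, $\g{n}\ominus\g{n}^{1}\subseteq \g{s}_{\g{a}\oplus\g{n}}$ and thus $\g{s}_\g{p}^{\perp}\subseteq\g{a}\oplus\g{p}^{1}$ holds with no modification. If the projection $\g{s}_\g{a}$ of $\g{s}$ onto $\g{a}$ is a proper subspace, I pick any nonzero $V\in\g{a}$ with $V\perp\g{s}$ and apply Lemma~\ref{lemma:normalap1}(\ref{lemma:normalap1:a}) to obtain $\g{s}_\g{p}^{\perp}=\R V\oplus(1-\theta)\R\eta_\alpha$ for some $\eta_\alpha\in\g{g}_\alpha$; once more $\bar{S}=S$ works.

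The remaining subcase $\g{s}_\g{a}=\g{a}$ is the main obstacle. Under the linear isomorphism $H+X\mapsto H+(1-\theta)X$ from $\g{a}\oplus\g{n}$ to $\g{p}$, the condition $\g{s}_\g{p}^{\perp}\cap\g{p}^{1}\neq 0$ is equivalent to $\pi_{\g{n}^{1}}(\g{s})\neq\g{n}^{1}$, so I would assume for contradiction the pathological situation $\pi_{\g{n}^{1}}(\g{s})=\g{n}^{1}$, write a basis $v_{i}=H_{i}+(1-\theta)X_{i}$ of $\g{s}_\g{p}^{\perp}$ with $H_{1},H_{2}\in\g{a}$ and $X_{1},X_{2}\in\g{n}^{1}$ both linearly independent, and extract algebraic constraints from the Lie triple system property $[v_{1},[v_{1},v_{2}]]\in\g{s}_\g{p}^{\perp}\subseteq\g{a}\oplus\g{p}^{1}$, demanding that every $\g{p}_\lambda$-component with $\lambda$ of level at least two vanishes, in combination with $[\g{s}_\g{p}^{\perp},\g{s}_\g{p}^{\perp}]\perp\g{s}$. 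These constraints should force $X_{1}$ and $X_{2}$ to be concentrated in a common single simple root space $\g{g}_\alpha$, after which a suitable conjugation by an element of $N$ in the spirit of Lemma~\ref{lemma:normalap1}(\ref{lemma:normalap1:aga}) produces the required $\bar{S}$, appealing to Lemma~\ref{lemma:OrbitEquivalenceNonClosed} to secure closedness if necessary. The most delicate step is controlling the cancellations between cross terms associated with distinct simple roots in the triple brackets, which is the principal technical difficulty of this subcase.
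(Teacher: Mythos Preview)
Your reductions for the codimension one case of $\tilde{\g{s}}_{\g{a}\oplus\g{n}}$ and for the codimension two case with $\g{s}_\g{a}\neq\g{a}$ are correct and match what the paper does (partly before the lemma, partly implicitly within it). The gap is in your ``hard'' subcase.

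There your plan is to use triple brackets together with $[\g{s}_\g{p}^{\perp},\g{s}_\g{p}^{\perp}]\perp\g{s}$ to force $X_{1}$ and $X_{2}$ into a single simple root space $\g{g}_\alpha$, and only then conjugate. This expected outcome is wrong: the components of the normal vectors need not collapse to one $\g{g}_\alpha$ before any conjugation. The paper's organizing device, which you are missing, is the set $\Psi=\{\alpha\in\Lambda:\pi_{\g{g}_\alpha}(\g{s}_\g{p}^{\perp})\neq 0\}$ together with a preliminary step showing $\bigoplus_{\alpha\in\Psi}\R H_\alpha\subseteq\pi_{\g{a}}(\g{s}_\g{p}^{\perp})$, proved by bracketing elements of the form $T+H\in\g{s}$ against suitable combinations $\xi_\alpha+x\xi_0+y\eta_0\in\g{s}_{\g{a}\oplus\g{n}}$. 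After this, the argument splits not by root, but by whether \emph{every} $\pi_{\g{g}_\alpha}(\g{s}_\g{p}^{\perp})$ is one-dimensional. If so, the proportionality of $\xi_\alpha$ and $\eta_\alpha$ in each root space makes the single bracket $[\xi,\eta]$, stripped of its $\g{k}$-part, land in $\g{s}_\g{p}^{\perp}$; the level-two pieces must vanish since $\g{s}_\g{p}^{\perp}\subseteq\g{a}\oplus\g{p}^{1}$, leaving a nonzero element of $\g{s}_\g{p}^{\perp}\cap\g{p}^{1}$ --- an immediate contradiction, no conjugation needed. If instead some $\pi_{\g{g}_\alpha}(\g{s}_\g{p}^{\perp})$ is two-dimensional, the inclusion $H_\alpha\in\pi_{\g{a}}(\g{s}_\g{p}^{\perp})$ yields a vector $H_\alpha+\sum_{\beta\in\Psi}(1-\theta)\xi_\beta\in\g{s}_\g{p}^{\perp}$ with $\xi_\alpha\neq 0$, and conjugation by $\Exp(-\xi_\alpha/\lvert\xi_\alpha\rvert^{2})$ already produces $\sum_\beta(1-\theta)\xi_\beta$ in the new normal space --- without first reducing to a single root space. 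Your proposed triple-bracket cancellations do not obviously lead to either of these conclusions, and the emphasis on the Lie triple system condition over the polarity condition $[\xi,\eta]\perp\g{s}$ points you toward the less useful tool.
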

	
\begin{proof}
Assume $\g{s}_{\g{p}}^{\perp}\cap \g{p}^{1}$ is trivial. 
Let $\Psi=\{ \alpha\in \Lambda\colon \pi_{\g{g}_{\alpha}}(\g{s}_\g{p}^{\perp})\neq 0 \}$, where $\pi_{\g{g}_\alpha}\colon\g{g}\to\g{g}_\alpha$ denotes the orthogonal projection.
Since the action is not hyperpolar, $\Psi$ is a nonempty subset of $\Lambda$. 
We prove $\g{a}^{\Psi}=\bigoplus_{\alpha\in \Psi}\mathbb{R}H_{\alpha}\subseteq \pi_{\g{a}}(\g{s}_\g{p}^{\perp})$. 
Here $\pi_\g{a}$ denotes the orthogonal projection onto $\g{a}$.
Indeed, assume $H\in \g{a}\ominus \pi_{\g{a}}(\g{s}_\g{p}^{\perp})$. Then there exists a vector $T\in \g{t}$ such that $T+H\in \g{s}$.
On the other hand, let $\alpha\in \Psi$. 
We may find two vectors $\xi=\xi_{0}+\sum_{\beta\in\Lambda}(1-\theta)\xi_{\beta}$ and $\eta=\eta_{0}+\sum_{\beta\in\Lambda}(1-\theta)\eta_{\beta}$ in $\g{s}_\g{p}^{\perp}$ such that $\xi_{\alpha}\neq 0$ and $\langle\xi_{\alpha},\eta_{\alpha}\rangle=0$. 
By our assumption, $\xi_{0}$ and $\eta_{0}$ are linearly independent, which implies that there exist unique constants $x$, $y\in \mathbb{R}$ such that $\xi_{\alpha}+x\xi_{0}+y\eta_{0}\in \g{s}_{\g{a}\oplus\g{n}}$. 
In particular, we may find $T'\in \g{t}$ satisfying $T'+\xi_{\alpha}+x\xi_{0}+y\eta_{0}\in \g{s}$. 
Thus, $[T,\xi_{\alpha}]+\alpha(H)\xi_{\alpha}=[T+H,T'+\xi_{\alpha}+x\xi_{0}+y\eta_{0}]\in \g{s}$. 
Taking the inner product with $\xi$, we deduce $\alpha(H)=0$. 
All in all, we obtain $\g{a}\ominus\pi_{\g{a}}(\g{s}_\g{p}^{\perp})\subseteq\g{a}_{\Psi}=\cap_{\alpha\in\Psi}\ker \alpha$, so $\g{a}^{\Psi}\subseteq \pi_{\g{a}}(\g{s}^{\perp})$ (in particular, $\Psi$ has either one or two elements).

In order to prove the result, we assume first that for all $\alpha\in \Psi$, the orthogonal projection $\pi_{\g{g}_{\alpha}}(\g{s}_\g{p}^{\perp})$ is one-dimensional. 
Thus, we can take two orthogonal vectors $\xi=\xi_{0}+\sum_{\alpha\in\Psi}(1-\theta)\xi_{\alpha}$ and $\eta=\eta_{0}+\sum_{\alpha\in\Psi}(1-\theta)\eta_{\alpha}$ that span $\g{s}_{\g{p}}^{\perp}$. 
Since the action is polar nonhyperpolar, the vector $[\xi,\eta]$ is nonzero and orthogonal to $\g{s}$. 
Observe that, because $\Psi\subseteq \Lambda$, $[\theta \xi_{\alpha},\eta_{\alpha}]\in \g{a}$ for all $\alpha\in \Psi$, and $\alpha-\beta\notin\Sigma$, we have
\[
[\xi,\eta]
=(1+\theta)\left(\sum_{\alpha\in\Psi}
\Bigl(\alpha(\xi_{0})\eta_{\alpha}-\alpha(\eta_{0})\xi_{\alpha}\Bigr)
+\sum_{\alpha,\beta\in\Psi}[\xi_{\alpha},\eta_{\beta}]\right).
\]
Since $\theta\g{n}$ and $\g{s}$ are orthogonal, we obtain 
\[
(1-\theta)\left(\sum_{\alpha\in\Psi}
\Big(\alpha(\xi_{0})\eta_{\alpha}-\alpha(\eta_{0})\xi_{\alpha}\Bigr)
+\sum_{\alpha,\beta\in\Psi}[\xi_{\alpha},\eta_{\beta}]\right)\in \g{s}_{\g{p}}^{\perp}\subseteq \g{a}\oplus\g{p}^{1},
\]
which means that all terms in $\g{p}^{2}$ cancel out and $\g{s}_{\g{p}}^{\perp}\cap\g{p}^{1}\neq 0$, a contradiction.

Now, assume that there exists $\alpha\in \Psi$ such that $\pi_{\g{g}_{\alpha}}(\g{s}_\g{p}^{\perp})$ is two-dimensional. 
Since $H_{\alpha}\in \pi_{\g{a}}(\g{s}_\g{p}^{\perp})$, we may find $H_{\alpha}+\sum_{\beta\in\Psi}(1-\theta)\xi_{\beta}\in\g{s}_\g{p}^{\perp}$, 
with each $\xi_{\beta}\in\g{g}_{\beta}$, and $\xi_{\alpha}\neq 0$, for dimension reasons. 
Consider the element $g=\Exp(-\xi_{\alpha}/\lvert\xi_{\alpha}\rvert^{2})\in N$. 
Then the action of $S$ is orbit equivalent to the action of $gSg^{-1}$, whose Lie algebra is $\Ad(g)\g{s}\subseteq \g{t}\oplus\g{a}\oplus\g{n}$. 
Note that the equality
\[
\Ad(g^{-1})^{*}\biggl(H_{\alpha}+\sum_{\beta_\in\Psi}\xi_{\beta}\biggr)
=\sum_{\beta\in\Psi}\xi_{\beta}
-\frac{\lvert\alpha\rvert^{2}}{2\lvert\xi_{\alpha}\rvert^{2}}\theta\xi_{\alpha}
\]
and $\langle\theta\g{n},\g{s}\rangle=0$ imply $\sum_{\beta\in\Psi}(1-\theta)\xi_{\beta}
\in(\Ad(g)\g{s})_\g{p}^{\perp}\cap\g{p}^{1}$.

To conclude, it suffices to prove that 
$(\Ad(g)\g{s})_{\g{p}}^{\perp}\subseteq \g{a}\oplus\g{p}^{1}$. 
This is the case if the projection of $\Ad(g)\g{s}+(\g{n}\ominus\g{n}^1)$ onto $\g{a}\oplus\g{n}$ has codimension~2.
As the projection of $\Ad(g)\g{s}+(\g{n}\ominus\g{n}^1)$ onto $\g{a}\oplus\g{n}$ cannot be $\g{a}\oplus\g{n}$ by Lemma~\ref{lemma:extendedProjectionIsNotAN}, our assertion is false whenever this projection is of codimension one, that is, when the orthogonal complement of $\Ad(g)\g{s}$ in $\g{a}\oplus\g{n}$ is spanned by
$\sum_{\beta\in\Psi}\xi_{\beta}$. 
By~\cite[Proposition 5.4]{BerndtTamaruC1Foliations}, $\xi_{\beta}= 0$ for all simple roots $\beta\neq \alpha$, and by Lemma~\ref{lemma:normalap1}(\ref{lemma:normalap1:ga}) we have $(\Ad(g)\g{s})_\g{p}^{\perp}=(1-\theta)\bigl(\R\xi_{\alpha}\oplus(a H_{\alpha}+\eta_{\alpha})\bigr)$ for $a\in \mathbb{R}$ and $\eta_{\alpha}\in \g{g}_{\alpha}$. Thus, $(\Ad(g)\g{s})_{\g{p}}^{\perp}
\subseteq\g{a}\oplus\g{p}^{1}$, contradicting the fact that the projection of $\Ad(g)\g{s}+(\g{n}\ominus\g{n}^1)$ onto $\g{a}\oplus\g{n}$ has codimension one.
\end{proof}

Due to the previous lemma, we may assume that $\g{s}_{\g{p}}^{\perp}\cap \g{p}^{1}$ is a nonzero subspace of $\g{g}$. 

First we need:

\begin{lemma}\label{lemma:isotropyAlgebras}
Let $S$ be a closed subgroup of $G$ whose Lie algebra $\g{s}$ satisfies $\g{s}\subseteq \g{t}\oplus\g{a}\oplus\g{n}$, and the orbits of $S$ form a homogeneous foliation on $M$. 
Let $V\in\g{n}$ be a vector such that $(1-\theta)V\in\g{s}_{\g{p}}^{\perp}$ and $g=\Exp(V)\in N$. Then $\g{s}\cap\g{t}=\Ad(g)(\g{s}\cap \g{t})=\Ad(g)(\g{s})\cap\g{k}$.
\end{lemma}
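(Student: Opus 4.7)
The plan is to reduce the entire statement to the single commutation relation $[T,V]=0$ for every $T\in\g{s}\cap\g{t}$. Granted this, $\Ad(g)T=e^{\ad V}T=T$, which immediately yields the first equality $\g{s}\cap\g{t}=\Ad(g)(\g{s}\cap\g{t})$ and the inclusion $\g{s}\cap\g{t}\subseteq\Ad(g)\g{s}\cap\g{k}$: each $T\in\g{s}\cap\g{t}$ equals $\Ad(g)T$ with $T\in\g{s}$, and $T\in\g{t}\subseteq\g{k}$. I will use throughout that the Iwasawa splitting $\g{g}=\g{k}\oplus\g{a}\oplus\g{n}$ together with $\g{t}\subseteq\g{k}_0$ force $\g{s}\cap\g{k}=\g{s}\cap\g{t}$, so this common subalgebra is precisely the isotropy algebra of $S$ at $o$.

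To prove the commutation $[T,V]=0$, I invoke Proposition~\ref{th:solvable}: since the orbits of $S$ form a foliation on the Hadamard manifold $M$, all of them are principal. A standard consequence for proper isometric actions is that the slice representation of the isotropy at a principal point is trivial on the normal space to the orbit. In our situation this means $\Ad(\Exp(tT))\,\xi=\xi$ for every $T\in\g{s}\cap\g{t}$ and every $\xi\in\g{s}_\g{p}^\perp=\nu_o(S\cdot o)$. Differentiating at $t=0$ gives $[T,\xi]=0$ for all such $\xi$. Specializing $\xi=(1-\theta)V\in\g{s}_\g{p}^\perp$ produces $[T,V]=[T,\theta V]$. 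Because $T\in\g{k}_0=Z_\g{k}(\g{a})$ commutes with $\g{a}$, the operator $\ad(T)$ preserves each root space, so $[T,V]\in\g{n}$ while $[T,\theta V]\in\theta\g{n}$; the two sides lie in complementary subspaces, and therefore both must vanish.

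For the remaining inclusion $\Ad(g)\g{s}\cap\g{k}\subseteq\g{s}\cap\g{t}$, I close the argument with a dimension count. The foliation hypothesis ensures that every isotropy subalgebra of $S$ has the same dimension as $\g{s}\cap\g{k}=\g{s}\cap\g{t}$; in particular, the isotropy at $g^{-1}\cdot o$, whose Lie algebra is $\g{s}\cap\Ad(g^{-1})\g{k}$, has this dimension. Since $\Ad(g)$ is a linear isomorphism carrying $\g{s}\cap\Ad(g^{-1})\g{k}$ onto $\Ad(g)\g{s}\cap\g{k}$, we obtain $\dim(\Ad(g)\g{s}\cap\g{k})=\dim(\g{s}\cap\g{t})$, and the previously established inclusion becomes an equality.

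The main obstacle in this approach is the verification that the slice representation is trivial; this is the only non-algebraic input, and it is a standard consequence of the homogeneous foliation assumption, already implicit in the argument for Lemma~\ref{lemma:OrbitEquivalenceNonClosed}. Everything else is then a direct manipulation with the Iwasawa decomposition and the identity $\Ad(\Exp(V))=e^{\ad V}$.
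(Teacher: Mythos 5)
Your proof is correct and follows essentially the same route as the paper: principality of the orbits gives $[\g{s}\cap\g{t},\g{s}_{\g{p}}^{\perp}]=0$, from which $[T,V]=0$ and hence $\Ad(g)$ fixes $\g{s}\cap\g{t}$, and the reverse inclusion is obtained by the same dimension count comparing the isotropy algebra $\g{s}\cap\Ad(g^{-1})\g{k}$ at $g^{-1}\cdot o$ with $\g{s}\cap\g{k}=\g{s}\cap\g{t}$. The only cosmetic differences are that you spell out the slice-representation argument explicitly and deduce $[T,V]=0$ from the complementarity of $\g{n}$ and $\theta\g{n}$, whereas the paper uses the injectivity of $1-\theta$ on $\g{n}$; these are equivalent.
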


\begin{proof}
Since the orbit $S\cdot o$ is principal, we have $[\g{s}\cap\g{t},\g{s}_{\g{p}}^{\perp}]=0$. 
Hence, $[\g{s}\cap\g{t},(1-\theta)V]=(1-\theta)[\g{s}\cap\g{t},V]=0$, which means $\ad(V)(\g{s}\cap \g{t})=0$ and $\Ad(g)(\g{s}\cap\g{t})=e^{\ad(V)}(\g{s}\cap\g{t})=\g{s}\cap\g{t}$. 
On the other hand, $\g{s}\cap\Ad(g^{-1})\g{k}$ is the isotropy algebra of $S$ at $g^{-1}\cdot o$, and since all orbits have the same type, it follows that $\dim \Ad(g)\g{s}\cap \g{k}=\dim \g{s}\cap\Ad(g^{-1})\g{k}=\dim\g{s}\cap\g{k}=\dim\g{s}\cap\g{t}$. 
Since $\g{s}\cap\g{t}=\Ad(g)(\g{s}\cap\g{t})\subseteq \Ad(g)(\g{s})\cap\g{k}$, the equality follows.
\end{proof}

The next result is needed later to handle the two examples in Theorem~\ref{th:MainTheoremList} simultaneously.

\begin{proposition}\label{prop:orbit-equivalence}
Let $S$ be a connected closed subgroup of $G$ inducing a homogeneous polar foliation on $M$.
Assume that its Lie algebra is contained in a maximally noncompact Borel subalgebra $\g{t}\oplus\g{a}\oplus\g{n}$,
and $\g{s}_{\g{a}\oplus\g{n}}=\g{z}\oplus(\g{n}\ominus\g{v}_\alpha)$, where $\g{v}_\alpha$ is an abelian subspace of $\g{g}_\alpha$, $\alpha\in\Lambda$, and $\g{z}$ is a subspace of $\g{a}$.
Let $\tilde{S}$ be the connected Lie subgroup of $G$ whose Lie algebra is $\tilde{\g{s}}=\g{z}\oplus(\g{n}\ominus\g{v}_\alpha)$.
Then, $S$ and $\tilde{S}$ have the same orbits.
\end{proposition}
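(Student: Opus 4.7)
The plan is to apply Lemma~\ref{lemma:OrbitEquivalenceNonClosed} with an intermediate subgroup $\tilde{H}$ sandwiched between $\tilde{S}$ and $S$. First I would verify that $\tilde{\g{s}}=\g{z}\oplus(\g{n}\ominus\g{v}_\alpha)$ is a Lie subalgebra of $\g{a}\oplus\g{n}$: the summand $\g{z}\subseteq\g{a}$ acts on each root space $\g{g}_\lambda$ as a scalar and in particular preserves $\g{v}_\alpha$; and since $\alpha$ is simple, $[\g{n},\g{n}]\subseteq\bigoplus_{k\geq 2}\g{n}^k\subseteq\g{n}\ominus\g{g}_\alpha\subseteq\g{n}\ominus\g{v}_\alpha$. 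Consequently $\tilde{S}=\Exp(\tilde{\g{s}})\subseteq AN$ is a closed connected subgroup of $G$, and via the diffeomorphism $AN\cong M$ the orbit $\tilde{S}\cdot o$ is identified with $\tilde{S}$ itself; in particular $\tilde{S}\cdot o$ is closed in $M$. The orthogonal projection $\g{g}\to\g{p}$ annihilates $\g{t}\subseteq\g{k}$ and sends both $\g{s}$ and $\tilde{\g{s}}$ onto the same subspace of $\g{p}$, so $T_o(S\cdot o)=T_o(\tilde{S}\cdot o)$ and the two orbits have the same dimension $\dim\tilde{\g{s}}$.

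Next I would invoke the polarity-plus-foliation hypothesis to control the principal isotropy $\g{s}\cap\g{t}$. Since all $S$-orbits are principal, the slice representation of $\g{s}\cap\g{t}$ at $o$ on $\g{s}_\g{p}^\perp=(\g{a}\ominus\g{z})\oplus(1-\theta)\g{v}_\alpha$ is trivial. For $T\in\g{s}\cap\g{t}$ and $V\in\g{v}_\alpha$ this reads $[T,(1-\theta)V]=(1-\theta)[T,V]=0$, and since $[T,V]\in\g{g}_\alpha$ while $\g{g}_\alpha\cap\g{k}=0$, I conclude $[T,V]=0$. Combined with $[\g{s}\cap\g{t},\g{a}]=0$ and the fact that $\g{s}\cap\g{t}\subseteq\g{k}_0$ preserves every root space, this yields $[\g{s}\cap\g{t},\tilde{\g{s}}]\subseteq\tilde{\g{s}}$, so $\Exp(\g{s}\cap\g{t})$ normalizes $\tilde{S}$ and $\tilde{H}:=\tilde{S}\cdot\Exp(\g{s}\cap\g{t})$ is a subgroup of $G$. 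Lemma~\ref{lemma:OrbitEquivalenceNonClosed} then applies with $(H,\tilde{H})=(\tilde{S},\tilde{H})$: the containment $\tilde{S}\subseteq\tilde{H}$ is clear, $\tilde{H}\cdot o=\tilde{S}\cdot o$ (since $\Exp(\g{s}\cap\g{t})\subseteq K$ fixes $o$) is closed, and the isotropy of $\tilde{H}$ at $o$ is precisely $\Exp(\g{s}\cap\g{t})$, whose slice representation on $\nu_o(\tilde{H}\cdot o)=\g{s}_\g{p}^\perp$ is trivial by the calculation just performed. Therefore $\tilde{S}$ and $\tilde{H}$ act with the same orbits on $M$.

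The remaining and hardest step is to prove $S\subseteq\tilde{H}$, equivalently that $\g{s}\subseteq(\g{s}\cap\g{t})\oplus\tilde{\g{s}}$: whenever $T+Z\in\g{s}$ with $T\in\g{t}$ and $Z\in\tilde{\g{s}}$, I need $T\in\g{s}\cap\g{t}$ (so then also $Z\in\g{s}\cap\tilde{\g{s}}$). To establish this I plan to combine the polarity identity $[\g{s}_\g{p}^\perp,\g{s}_\g{p}^\perp]\perp\g{s}$ with the explicit decomposition $[\g{s}_\g{p}^\perp,\g{s}_\g{p}^\perp]\subseteq\g{k}_0\oplus(1+\theta)\g{v}_\alpha$ (analogous to the computation in the proof of Theorem~\ref{thm:polarExamples}) and the bracket relations obtained by letting $T+Z$ act on $\g{s}$, in order to rule out any nontrivial ``graph'' element whose $\g{t}$-part lies outside $\g{s}\cap\g{t}$. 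This is where I expect the main technical difficulty to lie, because the hypothesis that $\g{s}_{\g{a}\oplus\g{n}}=\tilde{\g{s}}$ does not a priori exclude $\g{s}$ from being a twisted graph of a nonzero map $\g{s}_\g{t}/(\g{s}\cap\g{t})\to\tilde{\g{s}}/(\g{s}\cap\tilde{\g{s}})$, and ruling this out needs the full strength of polarity. Once $S\subseteq\tilde{H}$ is in hand, the previous paragraph gives $S\cdot p\subseteq\tilde{H}\cdot p=\tilde{S}\cdot p$ for all $p\in M$, and the dimension equality from the first paragraph together with connectedness of $\tilde{S}\cdot p$ forces $S\cdot p=\tilde{S}\cdot p$ everywhere.
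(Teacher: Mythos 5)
Your first two steps are sound: $\tilde{\g{s}}$ is indeed a subalgebra, the principal-orbit hypothesis gives $[\g{s}\cap\g{t},\g{s}_{\g{p}}^{\perp}]=0$, hence $[\g{s}\cap\g{t},\g{v}_\alpha]=0$, and Lemma~\ref{lemma:OrbitEquivalenceNonClosed} applied to $\tilde{S}\subseteq\tilde{H}=\tilde{S}\Exp(\g{s}\cap\g{t})$ shows these two act with the same orbits. But the entire weight of the proposition rests on your third step, the inclusion $S\subseteq\tilde{H}$, i.e.\ $\g{s}\subseteq(\g{s}\cap\g{t})\oplus\tilde{\g{s}}$, equivalently $\g{s}_{\g{t}}=\g{s}\cap\g{t}$ (no twisted graph), and there you only announce a plan rather than give an argument. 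The ingredients you list do not suffice: the polarity identity $[\g{s}_{\g{p}}^{\perp},\g{s}_{\g{p}}^{\perp}]\perp\g{s}$ only involves brackets of normal vectors and yields $[\g{s}_{\g{t}},\g{v}_\alpha]\subseteq\g{g}_\alpha\ominus\g{v}_\alpha$ (plus the orthogonality you already used for $\g{s}\cap\g{t}$); it says nothing about the map $X\mapsto T_X$ attaching a $\g{t}$-component to elements of $\g{s}_{\g{a}\oplus\g{n}}$, which is exactly what you must control. So as written the proposal has a genuine gap at its crux.

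It is also worth noting that the statement you are trying to force, $\g{s}_{\g{t}}=\g{s}\cap\g{t}$, is stronger than what the paper's argument establishes, and the paper deliberately avoids it. The paper enlarges the group instead: it shows that $\hat{\g{s}}=\g{s}_{\g{t}}\oplus\tilde{\g{s}}$ (with the \emph{full} projection $\g{s}_{\g{t}}$, not just $\g{s}\cap\g{t}$) is a subalgebra, which reduces to proving $[\g{s}_{\g{t}},\g{v}_\alpha]=0$. That is the delicate point, and it is handled by introducing, for $V\in\g{v}_\alpha$, the operator $\Phi_V(X)=[T_X,V]$ on $\g{g}_\alpha\ominus\g{v}_\alpha$, showing it is self-adjoint via polarity, and killing any eigenvalue $\lambda\neq0$ by conjugating a suitable element $Z\in\g{s}$ by $\Exp(\frac{1}{\lambda}V)$ and invoking Lemma~\ref{lemma:isotropyAlgebras} (which itself uses that all orbits have the same type) to land in $\g{s}\cap\g{t}$ and reach a contradiction. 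With $[\g{s}_{\g{t}},\g{v}_\alpha]=0$ in hand, the slice representation of $\hat{S}$ at $o$ is trivial and Lemma~\ref{lemma:OrbitEquivalenceNonClosed} is applied twice, to $S\subseteq\hat{S}$ and $\tilde{S}\subseteq\hat{S}$; no claim that $\g{s}$ is an untwisted graph is ever needed. If you want to salvage your route, you would have to either reproduce an argument of comparable strength to show $T_X\in\g{s}\cap\g{t}$ for all $X$ (which may simply be false for some admissible $\g{s}$), or replace your $\tilde{H}$ by the paper's larger group $\hat{S}$, at which point you are back to proving $[\g{s}_{\g{t}},\g{v}_\alpha]=0$ by some version of the $\Phi_V$ argument.
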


\begin{proof}
Denote by $\g{s}_{\g{t}}$ the orthogonal projection of $\g{s}$ onto $\g{t}$. 
We start by proving that $\hat{\g{s}}=\g{s}_{\g{t}}\oplus\tilde{\g{s}}$ is a Lie subalgebra of $\g{g}$.
Since $[\g{g}_\lambda,\g{g}_\mu]\subseteq\g{g}_{\lambda+\mu}$, for $\lambda$, $\mu\in\Sigma^+$, and $\g{t}$ is abelian, centralizes $\g{a}$, and normalizes each root space, this amounts to proving the inclusion $[\g{s}_{\g{t}},\g{g}_\alpha\ominus\g{v}_\alpha]\subseteq \g{g}_\alpha\ominus\g{v}_\alpha$.

Let $U$, $V\in\g{v}_\alpha$ and $T\in\g{s}_\g{t}$.
We choose $X\in{\g{a}\oplus\g{n}}$, such that $T+X\in\g{s}$. 
Since the action of $S$ is polar, we know that $[\g{s}_{\g{p}}^{\perp},\g{s}_{\g{p}}^{\perp}]$ is perpendicular to $\g{s}$. Thus,
\begin{align*}
0&{}=\langle[(1-\theta)U,(1-\theta)V],\,T+X\rangle
=\langle-(1+\theta)[\theta U,V],\,T\rangle\\
&{}=-2\langle [U,\theta V] ,\,T \rangle
=-2\langle U, [V,T]  \rangle.
\end{align*}
This proves $[\g{s}_{\g{t}},\g{v}_\alpha]\subseteq\g{g_{\alpha}\ominus\g{v}_\alpha}$.

Let $T\in\g{s}\cap\g{t}$ and $V\in\g{v}_\alpha$.
Since $\g{s}\cap\g{t}\subseteq\g{s}_\g{t}$, we have $[T,V]\in\g{g}_\alpha\ominus\g{v}_\alpha$.
Let $X\in\g{g}_\alpha\ominus\g{v}_\alpha$. 
Then there exists $T_X\in\g{t}$ such that $T_X+X\in\g{s}$.
Thus, $[T,X]=[T,\,T_X+X]\in\g{s}$, and hence, $\langle[T,X],V\rangle=0$.
We have proved $[\g{s}\cap\g{t},\g{v}_\alpha]=0$.

Let $T\colon\tilde{\g{s}}\to\g{s}_{\g{t}}\ominus(\g{s}\cap\g{t})$, $X\mapsto T_X$, be defined by $T_{X}+X\in\g{s}$.
This map is well-defined: if $T_X$, $T_X'\in\g{t}$ are such that $T_X+X$, $T_X'+X\in\g{s}$, subtracting,  $T_X-T_X'\in\g{s}\cap\g{t}$.
Note that $T$ is surjective.

Given a nonzero $V\in\g{v}_\alpha$, we define $\Phi_{V}\colon\g{g}_{\alpha}\ominus\g{v}_\alpha\to\g{g}_{\alpha}\ominus\g{v}_\alpha$ by $\Phi_{V}(X)=[T_{X},V]$. 
We prove that $\Phi_{V}$ is self-adjoint. 
Indeed, given $X$, $Y\in\g{g}_{\alpha}\ominus\g{v}_\alpha\subseteq\tilde{\g{s}}$, we obtain $[T_{X},Y]+[X,T_{Y}]+[X,Y]=[T_{X}+X,T_{Y}+Y]\in\g{s}$, which means 
\[
0=\langle V,[T_{X}+X,T_{Y}+Y]\rangle
=-\langle[T_{X},V],Y\rangle+\langle[T_{Y},V],X\rangle
=-\langle\Phi_V(X),Y\rangle+\langle X,\Phi_V(Y)\rangle.
\]

We now prove that $\Phi_{V}=0$. 
Assume this is not the case, so by the spectral theorem, there exists a nonzero vector $X\in\g{g}_{\alpha}\ominus\g{v}_\alpha$ and a nonzero constant $\lambda\in\mathbb{R}$ such that $\Phi_{V}(X)=\lambda X$. 

Observe that $[V,T_{[V,X]}]=0$. Indeed, $[V,T_{[V,X]}]\in\g{g}_{\alpha}\ominus\g{v}_\alpha$, and given any $Y\in\g{g}_{\alpha}\ominus\g{v}_\alpha$ we obtain 
\begin{align*}
0&{}=\langle [T_Y+Y,T_{[V,X]}+[V,X]],\,V\rangle
=\langle [T_{Y},[V,X]]+[Y,T_{[V,X]}],\,V \rangle\\
&{}=\langle [Y,T_{[V,X]}],\,V \rangle
=-\langle Y,\,[V,T_{[V,X]}]\rangle,
\end{align*}
which implies $[V,T_{[V,X]}]=0$. 

Now, consider $g=\Exp(\frac{1}{\lambda}V)$ and $Z=T_{X}+X-\frac{1}{2\lambda}\bigl(T_{[V,X]}+[V,X]\bigr)\in\g{s}$. Then
\begin{align*}
\Ad(g)Z
&{}=e^{\frac{1}{\lambda}\ad(V)}
\Bigl(T_{X}+X-\frac{1}{2\lambda}T_{[V,X]}-\frac{1}{2\lambda}[V,X]\Bigr) \\
&{}=T_{X}+X-\frac{1}{2\lambda}T_{[V,X]}-\frac{1}{2\lambda}[V,X] +\dfrac{1}{\lambda}
\bigl(-\lambda X + [V,X]\bigr)-\frac{1}{2\lambda^{2}}\lambda[V,X] \\
&{}=T_{X}-\dfrac{1}{2\lambda}T_{[V,X]}\in\Ad(g)(\g{s})\cap\g{k}.
\end{align*}

By Lemma~\ref{lemma:isotropyAlgebras}, we obtain $\Ad(g)Z\in\g{s}\cap\g{t}$, and thus, $Z\in \Ad(g^{-1})(\g{s}\cap\g{t})=\g{s}\cap\g{t}$, a contradiction. 
We conclude that $\Phi_{V}$ is the zero map for every $V\in\g{v}_\alpha$.
Since $\Phi_{\g{v}_\alpha}\equiv 0$ we have $[T_{X},V]=0$ for every $V \in \g{v}_{\alpha}$ and $X\in \g{g}_{\alpha}\ominus \g{v}_{\alpha}$.

Now, let $H\in\g{z}$ and $X\in\g{g}_{\alpha}\ominus\g{v}_{\alpha}$.
Since the vectors $T_{H}+H$ and $T_{X}+X$ are in $\g{s}$, their Lie bracket $[T_{H}+H,T_{X}+X]=[T_{H},X]+\alpha(H)X$ is also in $\g{s}$, and because $X\in\g{g}_{\alpha}\ominus \g{v}_{\alpha}$ we deduce that $[T_{H},X]\in\g{g}_{\alpha}\ominus \g{v}_{\alpha}$.
As a consequence, $\ad(T_{H})$ preserves $\g{g}_{\alpha}\ominus \g{v}_{\alpha}$, so it also preserves $\g{v}_{\alpha}$ as it is skew-symmetric.
Similarly, suppose that $Y\in\g{g}_{\lambda}$ is any vector, with $\lambda\in\Sigma^{+}\setminus \{\alpha\}$.
We see that $[T_{Y},X]+[Y,T_{X}]+[Y,X]=[T_{Y}+Y,T_{X}+X]\in\g{s}$, so taking the inner product with any $V\in\g{g}_{\alpha}$ we obtain that $0=\langle [T_{Y},X],V\rangle$.
This means that $\ad(T_{Y})(\g{g}_{\alpha}\ominus\g{v}_{\alpha})$ is contained in $\g{g}_{\alpha}\ominus \g{v}_{\alpha}$, and the skew-symmetry of $\ad(T_{Y})$ yields $\ad(T_{Y})(\g{v}_{\alpha})\subseteq \g{v}_{\alpha}$.

All in all, we have seen that $[\g{s}_\g{t}\ominus(\g{s}\cap\g{t}),\,\g{v}_\alpha]\subseteq \g{v}_{\alpha}$.
Let $T\in\g{s}_{\g{t}}\ominus (\g{s}\cap \g{t})$ and $V\in\g{v}_{\alpha}$.
Then, the skew-symmetry of $\ad(T)$ implies that $[T,V]\in \g{v}_{\alpha}\ominus \R V$.
Choose $X\in\g{s}_{\g{a}\oplus\g{n}}$ such that $T+X\in\g{s}$, and let $W\in\g{v}_{\alpha}\ominus \R V$ be arbitrary.
As the action of $S$ is polar and $\g{v}_{\alpha}$ is abelian, we see that
\[
	-2[\theta V,W]=[(1-\theta)V,(1-\theta)W]\in\g{k}_{0}
\]
is orthogonal to $\g{s}$, and as a consequence we deduce that
\[
	0=\langle T+X,[\theta V,W] \rangle =\langle [T,V],W \rangle,
\]
which means that $[T,V]=0$, so $[\g{s}_\g{t}\ominus(\g{s}\cap\g{t}),\g{v}_\alpha]=0$.

Combining what we saw in the previous paragraph with $[\g{s}\cap\g{t},\g{v}_\alpha]=0$, we arrive at $[\g{s}_\g{t},\g{v}_\alpha]=0$.
Therefore, by skew-symmetry of the elements of $\g{t}$, we get 
$\langle[\g{s}_\g{t},\g{g}_\alpha\ominus\g{v}_\alpha],\g{v}_\alpha\rangle=\langle \g{g}_\alpha\ominus\g{v}_\alpha,[\g{s}_\g{t},\g{v}_\alpha]\rangle=0$.
Since $\g{t}$ normalizes $\g{g}_\alpha$, we finally get $[\g{s}_\g{t},\g{g}_\alpha\ominus\g{v}_\alpha]\subseteq \g{g}_\alpha\ominus\g{v}_\alpha$, which in turn implies that $\hat{\g{s}}=\g{s}_{\g{t}}\oplus\g{z}\oplus(\g{n}\ominus\g{v}_\alpha)=\g{s}_\g{t}\oplus\g{s}_{\g{a}\oplus\g{n}}$ is a Lie subalgebra of $\g{t}\oplus\g{a}\oplus\g{n}$.

We can therefore consider the connected subgroup $\hat{S}$ of $G$ whose Lie algebra is $\hat{\g{s}}$. 
We prove that $\hat{S}$, $S$ and $\tilde{S}$ have the same orbits. 
Note that a priori we do not know if $\hat{S}$ is closed, and thus the action of $\hat{S}$ may not be proper.
Since $S\subseteq \hat{S}$ and $\g{s}_{\g{a}\oplus\g{n}}=\hat{\g{s}}_{\g{a}\oplus\g{n}}=\g{z}\oplus(\g{n}\ominus\g{v}_\alpha)$, we deduce that $S\cdot o = \hat{S}\cdot o$. 
The same argument may be applied to see that $\hat{S}\cdot o =\tilde{S}\cdot o$. 
In particular, $S\cdot o = \hat{S}\cdot o = \tilde{S}\cdot o$ is simply connected (because $AN$ is an exponential Lie group acting simply transitively on $M$), which means that the isotropy subgroups $S\cap K$ and $\hat{S}\cap K$ are connected. 
As a consequence, the slice representation of $\hat{S}$ at $o$ is trivial because $[\hat{\g{s}}\cap\g{k},\g{s}_{\g{p}}^{\perp}]
=[\g{s}_{\g{t}},(\g{a}\ominus\g{z})\oplus(1-\theta)\g{v}_\alpha]=(1-\theta)[\g{s}_\g{t},\g{v}_\alpha]=0$. 
Hence, using Lemma~\ref{lemma:OrbitEquivalenceNonClosed} twice, the groups $S$, $\hat{S}$, and $\tilde{S}$ act with the same orbits .
\end{proof}

Since we can assume that $\g{s}_{\g{p}}^{\perp}\cap \g{p}^{1}$ has dimension one or two, we tackle these two possibilities separately.

\subsection{The case $\g{s}_{\g{p}}^{\perp}\subseteq \g{p}^{1}$}\hfill
	
Assume $\g{s}_{\g{p}}^{\perp}$ is contained in $\g{p}^{1}$. 
We have that $\g{a}$ and $\g{n}\ominus\g{n}^{1}$ are subspaces of $\g{s}_{\g{a}\oplus\g{n}}$. 
A direct application of Lemma~\ref{lemma:projectionImpliesContained} gives $\g{n}\ominus\g{n}^{1}\subseteq \g{s}$.
Let $\xi=\sum_{\alpha\in\Lambda}(1-\theta)\xi_{\alpha}$ and $\eta=\sum_{\alpha\in\Lambda}(1-\theta)\eta_{\alpha}$ be orthonormal vectors in $\g{s}_{\g{p}}^{\perp}$, where $\xi_{\alpha}$, $\eta_{\alpha}\in\g{g}_{\alpha}$. 
Since the action is polar nonhyperpolar, $[\xi,\eta]=(1+\theta)\bigl(\sum_{\alpha,\beta\in\Lambda}
[\xi_{\alpha},\eta_{\beta}]-[\theta\xi_{\alpha},\eta_{\beta}]\bigr)$ is a nonzero vector orthogonal to $\g{s}$. 
By using the fact that $\g{n}\ominus\g{n}^{1}\subseteq \g{s}$ and $[\theta\xi_{\alpha},\eta_{\beta}]=0$ when $\beta\neq \alpha$, we deduce $[\xi_{\alpha},\eta_{\alpha}]=0$, and $[\xi_{\alpha},\eta_{\beta}]+[\xi_{\beta},\eta_{\alpha}]=0$.
Thus, $[\xi,\eta]=-(1+\theta)\sum_{\alpha\in\Lambda}
[\theta\xi_{\alpha},\eta_{\alpha}]$.

Since $\g{s}_\g{p}^\perp$ is a two-dimensional Lie triple system, it determines a totally geodesic submanifold that is isometric to a real hyperbolic space.
Hence, there exists $C>0$ such that $\ad(\xi)^2\eta=C\eta$.
Thus, we have for every $\alpha\in\Lambda$,
\begin{align*}
C\langle\xi_\alpha,\eta_\alpha\rangle
&{}=C\langle\xi_\alpha,\eta\rangle=\langle\xi_\alpha,\ad(\xi)^2\eta\rangle
=\langle[\xi,\xi_\alpha],[\xi,\eta]\rangle\\[1ex]
&{}=-\Bigl\langle \sum_{\beta\in\Lambda}[\xi_\beta,\xi_\alpha]-[\theta\xi_\alpha,\xi_\alpha],(1+\theta)\sum_{\gamma\in\Lambda}
[\theta\xi_{\gamma},\eta_{\gamma}]\Bigr\rangle\\
&{}=\Bigl\langle \lvert\xi_{\alpha}\rvert^{2}H_{\alpha},(1+\theta)\sum_{\beta\in\Lambda}[\theta\xi_{\beta},\eta_{\beta}] \Bigr\rangle=0.
\end{align*}

\begin{proposition}\label{prop:normalVectorsInOneGalpha}
$\g{s}_{\g{a}\oplus\g{n}}=(\g{a}\oplus\g{n})\ominus\g{v}_\alpha$, where $\g{v}_\alpha$ is an abelian subspace of $\g{g}_{\alpha}$
\end{proposition}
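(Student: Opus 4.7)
The plan is to show that $\g{v} := \Span\{\hat\xi, \hat\eta\} \subseteq \g{n}^1$ is contained in a single simple root space $\g{g}_\alpha$; combined with the abelianness of $\g{v}$ already established (via $[\xi_\alpha, \eta_\alpha] = 0$ and $[\xi_\alpha, \eta_\beta] + [\xi_\beta, \eta_\alpha] = 0$), this identifies $\g{v}$ with the required $2$-dimensional abelian $\g{v}_\alpha \subseteq \g{g}_\alpha$, and the description $\g{s}_{\g{a}\oplus\g{n}} = (\g{a}\oplus\g{n}) \ominus \g{v}_\alpha$ follows at once.

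To prepare, I extract pointwise information from $\ad(\xi)^2\eta = C\eta$ and its counterpart for $\eta$: setting $Z := [\xi, \eta] \in \g{k}_0$ and using that $Z$ preserves each root space, these give $[Z, \xi_\gamma] = -C\eta_\gamma$ and $[Z, \eta_\gamma] = C\xi_\gamma$ for every $\gamma \in \Lambda$. Skew-adjointness of $\ad(Z)|_{\g{g}_\gamma}$ then forces $|\xi_\gamma| = |\eta_\gamma|$, so the support $\Pi := \{\gamma : \xi_\gamma \neq 0\}$ coincides with $\{\gamma : \eta_\gamma \neq 0\}$, and $a_\gamma := |\xi_\gamma|^2 = |\eta_\gamma|^2 > 0$ is well-defined for each $\gamma \in \Pi$. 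The task reduces to proving $|\Pi| = 1$.

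Suppose, for contradiction, that $|\Pi| \geq 2$ and pick distinct $\alpha$, $\beta \in \Pi$. The decisive step is to consider the auxiliary vector
\[
Y := a_\beta \xi_\alpha - a_\alpha \xi_\beta \in \g{g}_\alpha \oplus \g{g}_\beta,
\]
which is nonzero and, by root-space orthogonality together with $\langle \xi_\gamma, \eta_\gamma \rangle = 0$, satisfies $\langle Y, \hat\xi \rangle = 0 = \langle Y, \hat\eta \rangle$. Hence $Y \in \g{s}_{\g{a}\oplus\g{n}}$, so one can find $T_Y, T_H \in \g{t}$ with $T_Y + Y$ and $T_H + H$ in $\g{s}$ for any $H \in \g{a}$. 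Since $\g{s}$ is a Lie subalgebra, $[T_H + H, T_Y + Y] \in \g{s}$; using that $\g{t}$ is abelian, $\g{t} \subseteq \g{k}_0$ centralizes $\g{a}$, and $\ad(\g{k}_0)$ preserves each root space, this bracket simplifies to $[T_H, Y] + [H, Y] \in \g{n}^1 \cap \g{s} \subseteq \g{s}_{\g{a}\oplus\g{n}} = \g{a} \oplus (\g{n} \ominus \g{v})$, and hence must be orthogonal to $\hat\xi$.

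The contradiction arrives directly. Skew-adjointness of $\ad(T_H)$ on each root space gives $\langle [T_H, Y], \hat\xi \rangle = 0$, while $[H, Y] = a_\beta \alpha(H) \xi_\alpha - a_\alpha \beta(H) \xi_\beta$ yields
\[
\langle [H, Y], \hat\xi \rangle = a_\alpha a_\beta \bigl( \alpha(H) - \beta(H) \bigr).
\]
Requiring this to vanish for every $H \in \g{a}$, with $a_\alpha, a_\beta > 0$, forces $\alpha = \beta$ in $\g{a}^*$, contradicting the distinctness of simple roots. Thus $|\Pi| = 1$, as required. The main difficulty is identifying the specific auxiliary vector $Y$ whose coefficients $a_\beta$, $-a_\alpha$ are chosen precisely so that $Y$ is orthogonal to $\g{v}$: this is what converts the Lie-subalgebra hypothesis on $\g{s}$ (not merely its vector-space structure) into the rigid functional identity $\alpha = \beta$.
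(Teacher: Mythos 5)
Your proof is correct and essentially reproduces the paper's own argument: your auxiliary vector $Y=a_{\beta}\xi_{\alpha}-a_{\alpha}\xi_{\beta}$ is, up to scale, exactly the paper's vector $\xi_{\lambda}+x\xi_{\mu}$ with $x=-\lvert\xi_{\lambda}\rvert^{2}/\lvert\xi_{\mu}\rvert^{2}$, which is likewise lifted through $\g{t}$, bracketed with a lifted element of $\g{a}$, and paired against $\xi$ using skew-adjointness of $\ad(\g{t})$ to reach the same contradiction. The only variation is how the supports of $\xi$ and $\eta$ are matched: you get $\lvert\xi_{\gamma}\rvert=\lvert\eta_{\gamma}\rvert$ from the pointwise identities $[Z,\xi_{\gamma}]=-C\eta_{\gamma}$, $[Z,\eta_{\gamma}]=C\xi_{\gamma}$ (legitimate, since the previously established relations give $Z=[\xi,\eta]\in\g{k}_{0}$, so $\ad(Z)$ preserves each $\g{g}_{\gamma}$), whereas the paper treats $\eta$ by the same two-root argument and then invokes $[\xi,\eta]\neq 0$; both steps are sound.
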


\begin{proof}
Let $\lambda$, $\mu\in\Lambda$ with $\xi_{\lambda}$, $\xi_{\mu}\neq 0$. 
Since $\langle\xi_{\alpha},\eta_{\alpha}\rangle=0$ for all simple roots $\alpha\in \Lambda$, taking $x=-\lvert\xi_{\lambda}\rvert^{2}/\lvert\xi_{\mu}\rvert^{2}<0$ we have $\xi_{\lambda}+x\xi_{\mu}\in\g{s}_{\g{a}\oplus\g{n}}$. 
Hence, we may find a $T\in\g{t}$ such that $T+\xi_{\lambda}+x\xi_{\mu}\in\g{s}$. 
On the other hand, choose a vector $H\in \g{a}$ with $\lambda(H)=0$ and $\mu(H)\neq 0$. 
Since $\g{a}\subseteq \g{s}_{\g{a}\oplus\g{n}}$, there exists $T'\in \g{t}$ such that $T'+H\in \g{s}$. 
As a consequence, $[T',\xi_{\lambda}]+x[T',\xi_{\mu}]+x\mu(H)\xi_{\mu}
=[T'+H,T+\xi_{\lambda}+x\xi_{\mu}]\in\g{s}$. 
This means that $0=\langle[T',\xi_{\lambda}]+x[T',\xi_{\mu}]+x\mu(H)\xi_{\mu},
\sum_{\alpha}(1-\theta)\xi_{\alpha} \rangle=x\mu(H)\lvert\xi_{\mu}\rvert^{2}$, which is a contradiction. 
Thus, $\xi=(1-\theta)\xi_{\alpha}$ for a fixed simple root $\alpha\in \Lambda$. 
The same argument can be applied to conclude that $\eta=(1-\theta)\xi_{\beta}$ for a simple root $\beta\in\Lambda$. 
We now prove that $\alpha=\beta$. 
Indeed, if $\alpha\neq \beta$, we have $[\xi,\eta]=(1-\theta)([\xi_{\alpha},\eta_{\beta}]
-[\theta\xi_{\alpha},\eta_{\beta}])=0$, contradicting the fact that $S$ has a non flat section.

Finally, $\g{v}_\alpha=\R\xi_\alpha\oplus\R\eta_\alpha$ is abelian due to the discussion at the beginning of this case.
\end{proof}

The expression obtained in Proposition~\ref{prop:caseDim1OrbitEquivalence} together with  Proposition~\ref{prop:orbit-equivalence} with $\g{z}=\g{a}$ imply now that the action of $S$ is orbit equivalent to item~(\ref{th:main:a+n-v}) of Theorem~\ref{th:MainTheoremList}.

\subsection{The case $\dim (\g{s}_{\g{p}}^{\perp}\cap\g{p}^{1})=1$.}\hfill
	
In this setting, we can choose two orthonormal vectors $\xi=\xi_{0}+\sum_{\alpha\in\Lambda}(1-\theta)\xi_{\alpha}$ and $\eta=\sum_{\alpha\in\Lambda}(1-\theta)\eta_{\alpha}$ that span $\g{s}_{\g{p}}^{\perp}$, where $\xi_{0}\in\g{a}$ is nonzero and $\xi_{\alpha}$, $\eta_{\alpha}\in\g{g}_{\alpha}$ for each $\alpha\in\Lambda$.

Since $\alpha-\beta\notin\Sigma$ for $\alpha$, $\beta\in\Lambda$, $\alpha\neq\beta$, we have
\[
[\xi,\eta]
=(1+\theta)\Bigl(\sum_{\alpha\in\Lambda}\bigl(\alpha(\xi_0)\eta_\alpha-[\theta\xi_\alpha,\eta_\alpha]\bigr)
+\sum_{\alpha,\beta\in\Lambda}[\xi_\alpha,\eta_\beta]\Bigr).
\]
Recall that $\ad(\xi)^{2}\eta=C\eta$ for a positive constant $C\in \mathbb{R}$. In particular, for any $H\in\g{a}$, 
\begin{align*}
0
&{}=\langle C\eta,\,H\rangle
=\langle\ad(\xi)^2\eta,\,H\rangle
=\langle[\xi,\eta],\,[\xi,H]\rangle=-\Bigl\langle [\xi,\eta],\,
(1+\theta)\sum_{\gamma\in\Lambda}\gamma(H)\xi_\gamma\Bigr\rangle\\
&{}=-2\sum_{\alpha\in\Lambda}\alpha(\xi_0)\alpha(H)\langle\xi_\alpha,\eta_\alpha\rangle
=\Bigl\langle -2\sum_{\alpha\in\Lambda}\alpha(\xi_0)\langle\xi_\alpha,\eta_\alpha\rangle H_\alpha,\,H\Bigr\rangle,
\end{align*}
which implies $-2\sum_{\alpha\in\Lambda}\alpha(\xi_{0})\langle\xi_{\alpha},\eta_{\alpha}\rangle H_{\alpha}=0$. Therefore, 
\begin{equation}\label{eq:inner-product-alpha}
\alpha(\xi_{0})\langle \xi_{\alpha},\eta_{\alpha} \rangle = 0, \text{ for every $\alpha\in\Lambda$.}
\end{equation}
	
On the other hand, since $\ad(\eta)^{2}\xi=C\xi$, for any $H\in\g{a}$,
\begin{align*}
C\langle\xi_0,H\rangle
&{}=\langle C\xi,H\rangle
=\langle \ad(\eta)^{2}\xi,\,H \rangle
=\langle [\eta,\xi],\, [\eta,H]]\rangle
=\Bigl\langle [\xi,\eta],\,
(1+\theta)\sum_{\gamma\in\Lambda}\gamma(H)\eta_\gamma\Bigr\rangle \\
&{}=2 \sum_{\alpha\in\Lambda}\alpha(\xi_0)\alpha(H)\lvert\eta_\alpha\rvert^2
=\Bigl\langle 2\sum_{\alpha\in\Lambda}\alpha(\xi_0)\lvert\eta_\alpha\rvert^2 H_\alpha,\,H\Bigr\rangle,
\end{align*}
we obtain
\begin{equation}\label{eq:a-component}
C\xi_0=2\sum_{\alpha\in\Lambda}\alpha(\xi_{0})\lvert\eta_{\alpha}\rvert^{2} H_{\alpha}. 
\end{equation}
	
Similarly, let $\alpha,\beta\in\Lambda$ be arbitrary, and $X\in\g{g}_{\alpha+\beta}$. 
Then, 	
\begin{align*}
0&{}=\langle C\xi,X\rangle
=\langle\ad(\eta)^2\xi,\,X\rangle
=\langle[\eta,\xi],\,[\eta,X]\rangle
=-\Bigl\langle [\xi,\eta],\,
\sum_{\mu\in\Lambda}\bigl([X,\eta_\mu]-[X,\theta\eta_\mu]\bigr)\Bigr\rangle\\
&{}=-\Bigl\langle \sum_{\gamma\in\Lambda}\gamma(\xi_0)\eta_\gamma,\,\sum_{\delta\in\Lambda}[X,\theta\eta_\delta]\Bigr\rangle
=\sum_{\gamma,\delta\in\Lambda}\gamma(\xi_0)\langle [\eta_\gamma,\eta_\delta],\,X\rangle\\[1ex]
&{}=\langle \alpha(\xi_{0})[\eta_{\alpha},\eta_{\beta}]+\beta(\xi_{0})[\eta_{\beta},\eta_{\alpha}],X \rangle.
\end{align*}
Consequently, for any two simple roots $\alpha,\beta\in\Lambda$, $[(\alpha-\beta)(\xi_{0})\,\eta_{\alpha},\eta_{\beta}]=0$.
	
\begin{lemma}\label{lemma:components-orthogonal}
We have $\langle \xi_{\alpha},\eta_{\alpha} \rangle=0$ for all $\alpha\in\Lambda$.
\end{lemma}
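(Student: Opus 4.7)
Equation~\eqref{eq:inner-product-alpha} already gives $\langle \xi_\alpha,\eta_\alpha\rangle = 0$ whenever $\alpha(\xi_0)\ne 0$, so the real task is to handle the case $\alpha\in\Lambda_0 := \{\alpha\in\Lambda : \alpha(\xi_0)=0\}$. For such $\alpha$, one has $\langle H_\alpha,\xi_0\rangle = \alpha(\xi_0)=0$, and since $\xi_0 + \sum_\gamma\xi_\gamma$ and $\sum_\gamma\eta_\gamma$ span the orthogonal complement of $\g{s}_{\g{a}\oplus\g{n}}$ in $\g{a}\oplus\g{n}$, this immediately shows $H_\alpha\in\g{a}\cap\g{s}_{\g{a}\oplus\g{n}}$. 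Hence there exists $T\in\g{t}$ with $T+H_\alpha\in\g{s}$.

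The plan is to exploit the adjoint identity $\ad(T+H_\alpha)^* = -\ad(T-H_\alpha)$ (coming from $\theta T=T$ and $\theta H_\alpha=-H_\alpha$): since $\ad(T+H_\alpha)$ preserves $\g{s}$ and $\xi,\eta$ are orthogonal to $\g{s}$, one concludes that $[T-H_\alpha,\xi]$ and $[T-H_\alpha,\eta]$ both lie in $\g{s}^\perp$, so their $\g{p}$-components lie in $\g{s}_\g{p}^\perp = \operatorname{span}\{\xi,\eta\}$. Because $[H_\alpha,\xi]$ and $[H_\alpha,\eta]$ are entirely in $\g{k}$, the $\g{p}$-components come only from $[T,\xi]$ and $[T,\eta]$, namely $\sum_\mu(1-\theta)[T,\xi_\mu]$ and $\sum_\mu(1-\theta)[T,\eta_\mu]$ (the part along $\xi_0$ vanishes since $[T,\xi_0]=0$). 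Writing each as $a\xi + b\eta$ and reading off the $\g{a}$-component forces $a\xi_0 = 0$, so $a=0$ (as $\xi_0\ne 0$); comparing $\g{g}_\mu$-components then gives $[T,\xi_\mu] = b\,\eta_\mu$ and $[T,\eta_\mu] = b'\,\eta_\mu$ for every simple $\mu$, with single scalars $b,b'\in\R$. Skew-adjointness of $\ad(T)$ applied to each root space yields $b'\,|\eta_\mu|^2 = 0$ and $b\,\langle\xi_\mu,\eta_\mu\rangle = 0$ for every $\mu$. Thus $b'=0$, and if $b\ne 0$ we conclude $\langle\xi_\mu,\eta_\mu\rangle = 0$ for every $\mu\in\Lambda$ simultaneously, proving the lemma in one stroke.

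The main obstacle is the residual case in which $b=0$ for every admissible $T$, i.e., every element of the coset $T + \g{s}\cap\g{t}$ centralizes all the $\xi_\mu$ and $\eta_\mu$. The strategy I would follow here is to adjust $T$ modulo $\g{s}\cap\g{t}$ to arrange $H_\alpha\in\g{s}$ itself, and then to apply $\ad(H_\alpha)$ to arbitrary elements $T_\beta + X_\beta \in \g{s}$ (with $X_\beta\in\g{g}_\beta\cap\g{s}_{\g{a}\oplus\g{n}}$): this yields $\langle\alpha,\beta\rangle X_\beta\in\g{s}$, so $X_\beta\in\g{s}$ whenever $\langle\alpha,\beta\rangle\ne 0$, and by subtraction $T_\beta\in\g{s}$. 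Iterating along chains of simple roots adjacent to $\alpha$ in the Dynkin diagram, and combining with the previously derived bracket relation $(\mu-\alpha)(\xi_0)[\eta_\mu,\eta_\alpha]=0$ (which already forces $[\eta_\mu,\eta_\alpha]=0$ for $\mu\in\Lambda_+$), should produce enough elements of $\g{s}$ to contradict $\xi,\eta\in\g{s}_\g{p}^\perp$ unless $\langle\xi_\alpha,\eta_\alpha\rangle=0$.
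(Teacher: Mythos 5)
Your opening reduction is fine, but the engine of your argument has an unjustified step. From $T+H_\alpha\in\g{s}$ and $\xi,\eta\perp\g{s}$ you correctly get $[T-H_\alpha,\xi],[T-H_\alpha,\eta]\perp\g{s}$, but you then claim that their $\g{p}$-components lie in $\g{s}_{\g{p}}^{\perp}$, i.e.\ that $[T,\xi]=a\xi+b\eta$ and $[T,\eta]=a'\xi+b'\eta$. This does not follow: elements of $\g{s}$ are not contained in $\g{p}$, so orthogonality of the full bracket to $\g{s}$ does not split into orthogonality of its $\g{k}$- and $\g{p}$-parts. Concretely, the $\g{k}$-part of $[T-H_\alpha,\xi]$ is $-\sum_{\mu}\langle\mu,\alpha\rangle(1+\theta)\xi_{\mu}$, which pairs nontrivially with the $\g{n}$-components of elements of $\g{s}$; one finds $\langle[T,\xi],X\rangle=\sum_{\mu}\langle\mu,\alpha\rangle\langle\xi_{\mu},\pi_{\g{g}_{\mu}}(X)\rangle$ for $X\in\g{s}$ (with $\pi_{\g{g}_\mu}$ the orthogonal projection onto $\g{g}_\mu$), and nothing guarantees this vanishes: what is known is that the single vector $\xi_{0}+\sum_{\mu}\xi_{\mu}$ is orthogonal to $\g{s}$, not the reweighted sum $\sum_{\mu}\langle\mu,\alpha\rangle\xi_{\mu}$. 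Note that the analogous computations in the paper always bracket two elements of $\g{s}$, so the result lies in $\g{s}$ and can safely be paired against $\xi$ and $\eta$; bracketing with a normal vector does not have this property. Moreover, even granting $[T,\xi]=b\eta$, $[T,\eta]=0$, your proof is incomplete by your own account: the residual case $b=0$ is precisely the hard case, and the sketch offered there starts with an unavailable move, since $H_\alpha\in\g{s}_{\g{a}\oplus\g{n}}$ only yields $T+H_\alpha\in\g{s}$ for some $T\in\g{t}$, and there is no reason one can adjust $T$ modulo $\g{s}\cap\g{t}$ so that $H_\alpha$ itself lies in $\g{s}$.

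For comparison, the paper's route avoids both issues: from \eqref{eq:a-component}, $C>0$ and $\xi_{0}\neq0$ there must exist a simple root $\alpha$ with $\eta_{\alpha}\neq0$ and $\alpha(\xi_{0})\neq0$, so $\langle\xi_{\alpha},\eta_{\alpha}\rangle=0$ by \eqref{eq:inner-product-alpha}; if some $\beta$ had $\langle\xi_{\beta},\eta_{\beta}\rangle\neq0$ (hence $\beta(\xi_{0})=0$), the relation $[(\alpha-\beta)(\xi_{0})\,\eta_{\alpha},\eta_{\beta}]=0$ together with the injectivity of $\ad(\eta_{\beta})\colon\g{g}_{\alpha}\to\g{g}_{\alpha+\beta}$ forces $\langle\alpha,\beta\rangle=0$, and then the bracket of the two explicit elements $T+H_{\beta}$ and $T'+\xi_{0}+x\eta_{\alpha}+y\eta_{\beta}$ of $\g{s}$, paired with $\eta$, gives $y\lvert\beta\rvert^{2}\lvert\eta_{\beta}\rvert^{2}=0$, a contradiction. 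If you want to salvage your approach, you would need either to justify why $[T,\xi]$ is normal at $o$ (it is not, in general) or to rework the argument so that only brackets of elements of $\g{s}$ are tested against $\xi$ and $\eta$, and you would still need a genuine argument, not a plan, for the case $b=0$.
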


\begin{proof}
We define $\Psi=\{ \alpha\in\Lambda \colon \eta_{\alpha}\neq 0 \}$. 
We show that $\langle\xi_{\alpha},\eta_{\alpha}\rangle=0$ for each $\alpha\in\Psi$.

From~\eqref{eq:a-component}
the map $\alpha\in\Psi\mapsto\alpha(\xi_{0})$ cannot be identically zero.
Thus, fix $\alpha\in\Psi$ such that $\alpha(\xi_0)\neq 0$. 
Hence~\eqref{eq:inner-product-alpha} already implies $\langle\xi_\alpha,\eta_\alpha\rangle=0$.

Assume $\beta\in\Psi$ satisfies $\langle\xi_\beta,\eta_\beta\rangle\neq 0$.
In particular, from \eqref{eq:inner-product-alpha} we have $\beta(\xi_{0})=0$.
If $\langle\alpha,\beta\rangle\neq 0$, the linear map $\ad(\eta_{\beta})\colon\g{g}_{\alpha}\to\g{g}_{\alpha+\beta}$ is injective. 
From $[(\alpha-\beta)(\xi_{0})\,\eta_{\alpha},\eta_{\beta}]=0$ we deduce  $(\alpha-\beta)(\xi_{0})\eta_{\alpha}=0$, so $\alpha(\xi_{0})=\beta(\xi_{0})=0$, contradiction. 
Thus, $\langle\alpha,\beta\rangle=0$.
In particular, $H_{\beta}\in\g{s}_{\g{a}\oplus\g{n}}$, so there exists $T\in\g{t}$ such that $T+H_{\beta}\in\g{s}$. 
On the other hand, $\xi_{0}+x\eta_{\alpha}+y\eta_{\beta}\in\g{s}_{\g{a}\oplus\g{n}}$ for $y=-\lvert\xi_{0}\rvert^{2}/\langle \xi_{\beta},\eta_{\beta}\rangle \neq 0$ and $x=-y\lvert\eta_{\beta}\rvert^{2}/\lvert\eta_{\alpha}\rvert^{2}\neq 0$, so $T'+\xi_{0}+x\eta_{\alpha}+y\eta_{\beta}\in\g{s}$ for an adequate $T'\in\g{t}$. 
Thus,
\begin{align*}
0&{}=\langle[T+H_{\beta},T'+\xi_{0}+x\eta_{\alpha}+y\eta_{\beta}],\,\eta\rangle\\
&{}=\langle x[T,\eta_{\alpha}]+y[T,\eta_{\beta}]+y\lvert\beta\rvert^{2}\eta_{\beta},\,\eta\rangle
=y\lvert\beta\rvert^2\lvert\eta_\beta\rvert^{2},
\end{align*}
which gives us a contradiction. 
Therefore, $\langle\xi_\beta,\eta_\beta\rangle=0$ for all $\beta\in\Lambda$.
\end{proof}

\begin{proposition}\label{prop:caseDim2NormalVectors}
There exists a simple root $\alpha\in\Lambda$ and a constant $a\in\mathbb{R}$ such that $\xi=aH_{\alpha}+(1-\theta)\xi_{\alpha}$. If $\xi_{\alpha}=0$ (that is, if $\xi\in\g{a}$), then $\eta=(1-\theta)\eta_{\alpha}$.
\end{proposition}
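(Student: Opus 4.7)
The plan is to split the analysis into the case $\xi \in \g{a}$ (handled immediately by Lemma~\ref{lemma:normalap1}(\ref{lemma:normalap1:a})) and the general case, where bracket computations inside $\g{s}$ force $\xi$ to be concentrated on a single simple root.

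If $\xi_\alpha = 0$ for every $\alpha$, then $\xi = \xi_0$ is a nonzero vector of $\g{a}$ orthogonal to $\g{s}$. Lemma~\ref{lemma:normalap1}(\ref{lemma:normalap1:a}) with $V = \xi_0$ gives $\g{s}_\g{p}^\perp = \R\xi_0 \oplus \R(1-\theta)\eta_\alpha$ for some simple root $\alpha$, so $\eta$ is proportional to $(1-\theta)\eta_\alpha$, and \eqref{eq:a-component} collapses to $C\xi_0 = 2\alpha(\xi_0)|\eta_\alpha|^2 H_\alpha$, forcing $\xi_0 \in \R H_\alpha$. Both assertions follow.

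Assume instead that $\Lambda'_\xi := \{\gamma \in \Lambda : \xi_\gamma \neq 0\}$ is nonempty. For any distinct $\lambda, \mu \in \Lambda'_\xi$, Lemma~\ref{lemma:components-orthogonal} implies that $\xi_\lambda + x\xi_\mu$ with $x = -|\xi_\lambda|^2/|\xi_\mu|^2$ is orthogonal to both $\xi$ and $\eta$, hence lies in $\g{s}_{\g{a}\oplus\g{n}}$; pick $T_1 + \xi_\lambda + x\xi_\mu \in \g{s}$. Since $\g{a} \cap \g{s}_{\g{a}\oplus\g{n}} = \g{a} \ominus \R\xi_0$, for any $H \in \g{a}$ with $H \perp \xi_0$ there is some $T_H + H \in \g{s}$. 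Taking the bracket of these two elements of $\g{s}$ and computing its inner product with $\xi$, the $\g{t}$-contributions cancel by skew-adjointness of $\ad$ on $\g{t}$, leaving $(\lambda - \mu)(H)|\xi_\lambda|^2 = 0$ for every such $H$. Consequently $H_\lambda - H_\mu \in \R\xi_0$, and the linear independence of $\{H_\gamma\}_{\gamma \in \Lambda}$ forces $|\Lambda'_\xi| \leq 2$.

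The remaining step is to exclude the subcase $|\Lambda'_\xi| = 2$ and to upgrade from $\Lambda'_\xi = \{\alpha\}$ to $\xi_0 \in \R H_\alpha$. When $\Lambda'_\xi = \{\lambda, \mu\}$ with $\xi_0 = c(H_\lambda - H_\mu)$, matching coefficients of $H_\gamma$ in \eqref{eq:a-component} shows that $\eta_\lambda, \eta_\mu \neq 0$. Lemma~\ref{lemma:projectionImpliesContained} then places $\g{g}_{2\lambda}, \g{g}_{2\mu}, \g{g}_{\lambda+\mu}$ (whenever these are roots) inside $\g{s}$, forcing the corresponding $\g{k}$-projections of $[\xi, \eta]$ to vanish and thereby $[\xi_\lambda, \eta_\lambda] = [\xi_\mu, \eta_\mu] = 0$ and $[\xi_\lambda, \eta_\mu] + [\xi_\mu, \eta_\lambda] = 0$. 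Projecting $\ad(\xi)^2\eta = C\eta$ onto $\g{p}_\lambda$ and comparing with \eqref{eq:a-component} yields $\langle\lambda,\mu\rangle = 0$; the residual inconsistency is extracted by imposing orthogonality of the $\g{k}_0$-component of $[\xi,\eta]$ against $\g{s} \cap \g{t}$, using Lemma~\ref{lemma:isotropyAlgebras} to control this intersection. Once $\Lambda'_\xi = \{\alpha\}$, writing $\xi_0 = aH_\alpha + \xi_0^\perp$ with $\xi_0^\perp \in \ker\alpha$ and running a parallel bracket argument rules out $\xi_0^\perp \neq 0$, completing the proof. The main obstacle throughout is the $|\Lambda'_\xi| = 2$ subcase, where the subtle interplay between the Lie triple system structure of $\g{s}_\g{p}^\perp$ and the isotropy algebra of $\g{s}$ is essential.
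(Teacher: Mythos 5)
Your first case ($\xi\in\g{a}$) is fine and coincides with the paper's: Lemma~\ref{lemma:normalap1}(\ref{lemma:normalap1:a}) gives $\eta=(1-\theta)\eta_\alpha$ and \eqref{eq:a-component} then forces $\xi_0\in\R H_\alpha$. The problem is the case $\Lambda'_\xi\neq\emptyset$. Your bracket argument pairing $\xi_\lambda$ with $\xi_\mu$ is correct as far as it goes, but it only yields $H_\lambda-H_\mu\in\R\xi_0$, hence $\lvert\Lambda'_\xi\rvert\le 2$, and the exclusion of the two-root subcase is where the proof genuinely breaks down: you assert that projecting $\ad(\xi)^2\eta=C\eta$ onto $\g{p}_\lambda$ gives $\langle\lambda,\mu\rangle=0$ and that ``the residual inconsistency is extracted by imposing orthogonality of the $\g{k}_0$-component of $[\xi,\eta]$ against $\g{s}\cap\g{t}$, using Lemma~\ref{lemma:isotropyAlgebras}'', but no computation is given, and the stated constraint is empty on its face --- $[\xi,\eta]$ is orthogonal to all of $\g{s}$ by polarity, so its orthogonality to $\g{s}\cap\g{t}$ is automatic and imposes nothing new; nor do you say how Lemma~\ref{lemma:isotropyAlgebras} (which concerns conjugation by $\Exp(V)$ with $(1-\theta)V\in\g{s}_\g{p}^\perp$) would enter. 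The final step, ruling out $\xi_0^\perp\neq 0$ once $\Lambda'_\xi=\{\alpha\}$, is likewise only asserted, although the ``parallel bracket argument'' you gesture at does work there.

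The gap closes if you change which pair of components of $\xi$ you play against each other: pair $\xi_0$ with $\xi_\alpha$ instead of $\xi_\lambda$ with $\xi_\mu$. Fix any $\alpha$ with $\xi_\alpha\neq 0$ and suppose $\xi_0\notin\R H_\alpha$; choose $H\in\g{a}$ with $\langle H,\xi_0\rangle=0$ and $\alpha(H)\neq 0$, so $T+H\in\g{s}$ for some $T\in\g{t}$. By Lemma~\ref{lemma:components-orthogonal}, $\xi_0+x\xi_\alpha$ with $x=-\lvert\xi_0\rvert^2/\lvert\xi_\alpha\rvert^2$ is orthogonal to both $\xi$ and $\eta$, so $T'+\xi_0+x\xi_\alpha\in\g{s}$ for some $T'\in\g{t}$. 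Then $[T+H,T'+\xi_0+x\xi_\alpha]=x[T,\xi_\alpha]+x\alpha(H)\xi_\alpha\in\g{s}$, and taking the inner product with $\xi$ (the $\ad(T)$-term dies by skew-adjointness) gives $x\alpha(H)\lvert\xi_\alpha\rvert^2=0$, a contradiction. Hence $\xi_0\in\R H_\alpha$ for every $\alpha\in\Lambda'_\xi$, and since $\xi_0\neq 0$ and the $H_\alpha$ are linearly independent, $\Lambda'_\xi$ is a single root and $\xi=aH_\alpha+(1-\theta)\xi_\alpha$. This is exactly the paper's argument; it disposes of the one- and two-root possibilities uniformly, needs neither Lemma~\ref{lemma:projectionImpliesContained} nor Lemma~\ref{lemma:isotropyAlgebras}, and delivers $\xi_0\in\R H_\alpha$ in the same stroke.
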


\begin{proof}
Firstly, suppose $\xi\in\g{a}$. Then, a direct application of Lemma~\ref{lemma:normalap1} implies that $\eta=(1-\theta)\eta_{\alpha}$ for a simple root $\alpha\in\Lambda$. In particular,~\eqref{eq:a-component} is reduced to $C\xi_{0}=2\alpha(\xi_{0})\lvert\eta_{\alpha}\rvert^{2}H_{\alpha}$, so $\xi_{0}\in\mathbb{R}H_{\alpha}$, and the proposition follows.

Now, assume $\xi_{0}\notin\g{a}$, and let $\alpha\in\Lambda$ be a simple root such that $\xi_{\alpha}\neq 0$. 
We prove that $\xi=aH_{\alpha}+(1-\theta)\xi_{\alpha}$.

Suppose that $\xi_{0}$ is not proportional to $H_{\alpha}$. 
Then there exists $H\in\g{a}$ such that $\langle H,\xi_{0} \rangle=0$ and $\alpha(H)\neq0$. 
As a consequence, $H\in\g{s}_{\g{a}\oplus\g{n}}$, and there exists $T\in\g{t}$ for which $T+H\in\g{s}$. 
On the other hand, $\xi_{0}+x\xi_{\alpha}\in\g{s}_{\g{a}\oplus\g{n}}$ for $x=-\lvert\xi_{0}\rvert^{2}/\lvert\xi_{\alpha}\rvert^{2}<0$ (because $\langle\xi_{\alpha},\eta_{\alpha}\rangle=0$ by Lemma~\ref{lemma:components-orthogonal}), and we may choose $T'\in\g{t}$ such that $T'+\xi_{0}+x\xi_{\alpha}\in\g{s}$. Thus, $[T+H,T'+\xi_{0}+x\xi_{\alpha}]=x[T,\xi_{\alpha}]+x\alpha(H)\xi_{\alpha}\in\g{s}$. Taking the inner product with $\xi$ yields $x\alpha(H)\lvert\xi_{\alpha}\rvert^{2}=0$, a contradiction.

Hence $\xi_{0}\in\mathbb{R}H_{\alpha}$ for any $\alpha\in\Lambda$.
The fact that simple roots are linearly independent together with \eqref{eq:a-component} implies $\xi_{\beta}=0$ for every $\beta\in\Lambda\setminus\{\alpha\}$. 
\end{proof}

So far we have proved that $\xi$ must take the form $aH_{\alpha}+(1-\theta)\xi_{\alpha}$ for a nonzero $a\in\R$ and $\xi_{\alpha}\in\g{g}_{\alpha}$ (which may be zero). If $\xi_{\alpha}=0$, then we also know that $\eta=(1-\theta)\eta_{\alpha}$. If $\xi_{\alpha}\neq 0$, then the third statement of Lemma~\ref{lemma:normalap1} implies that the action of $S$ is orbit equivalent to an action of another closed connected subgroup $\tilde{S}$ for which the normal space of $\tilde{S}\cdot o$ at $o$ takes the form $\tilde{\g{s}}_{\g{p}}^{\perp}=\{ \xi_{\alpha},bH_{\alpha}+(1-\theta)\nu_{\alpha} \}$ for a constant $b\in \mathbb{R}$ and a $\nu_{\alpha}\in\g{g}_{\alpha}$. 
Because of this, we may assume without loss of generality that $\g{s}_\g{p}^\perp$ is spanned by two orthogonal vectors $\xi=a H_\alpha+(1-\theta)\xi_\alpha$, $\eta=(1-\theta)\eta_{\alpha}$, with $a\neq 0$, $\xi_\alpha$, $\eta_\alpha\in\g{g}_\alpha$.
Recall from Lemma~\ref{lemma:normalap1} that $[\xi_\alpha,\eta_\alpha]=0$.

The key to finishing the proof lies in the following result:

\begin{proposition}\label{prop:caseDim1OrbitEquivalence}
Assume $\g{s}_{\g{p}}^{\perp}=\operatorname{span} \{ \xi,\eta \}$, where $\xi=a H_{\alpha}+(1-\theta)\xi_{\alpha}$ and $\eta=(1-\theta)\eta_{\alpha}$, where $a\neq 0$, $\xi_{\alpha}$, $\eta_{\alpha}\in\g{g}_{\alpha}$ are orthogonal commuting vectors, and $\alpha\in\Lambda$. 
Then:
\begin{enumerate}[{\rm (i)}]
\item If $\xi_{\alpha}=0$, then the action of $S$ has the same orbits as the action of the connected subgroup of $G$ whose Lie algebra is $(\g{a}\ominus\mathbb{R}H_{\alpha})\oplus(\g{n}\ominus\mathbb{R}\eta_{\alpha})$.\label{prop:caseDim1OrbitEquivalence:ker+n-l}
\item If $\xi_{\alpha}\neq 0$, then there exists an abelian subspace $\g{v}_\alpha\subseteq\g{g}_{\alpha}$ such that the action of $S$ is orbit equivalent to the action of the connected subgroup of $G$ whose Lie algebra is $\g{a}\oplus(\g{n}\ominus\g{v}_\alpha)$.\label{prop:caseDim1OrbitEquivalence:a+n-v}
\end{enumerate}
\end{proposition}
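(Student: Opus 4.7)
For case~(\ref{prop:caseDim1OrbitEquivalence:ker+n-l}), with $\xi_\alpha=0$, the normal space $\g{s}_\g{p}^\perp=\R H_\alpha\oplus\R(1-\theta)\eta_\alpha$ lies in $\g{a}\oplus\g{p}^1$ and splits cleanly across those two summands, so a direct computation yields $\g{s}_{\g{a}\oplus\g{n}}=\ker\alpha\oplus(\g{n}\ominus\R\eta_\alpha)$. This already has the shape required by Proposition~\ref{prop:orbit-equivalence} with $\g{z}=\ker\alpha$ and the (automatically abelian) one-dimensional subspace $\g{v}_\alpha=\R\eta_\alpha$ of $\g{g}_\alpha$, and that proposition delivers the claimed orbit equivalence.

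For case~(\ref{prop:caseDim1OrbitEquivalence:a+n-v}), the plan is to reduce, via a single conjugation inside $N$, to the situation of Proposition~\ref{prop:normalVectorsInOneGalpha}, in which the normal space is contained in $\g{p}^1$. A brief linear algebra computation shows that the $\g{g}$-orthogonal complement of $\g{s}_{\g{a}\oplus\g{n}}$ inside $\g{a}\oplus\g{n}$ is spanned by $aH_\alpha+\xi_\alpha$ and $\eta_\alpha$; rescaling the first to $H_\alpha+\xi_\alpha/a$ matches the hypothesis of Lemma~\ref{lemma:normalap1}(\ref{lemma:normalap1:aga}). Taking $g=\Exp(-a\xi_\alpha/\lvert\xi_\alpha\rvert^2)\in N$, that lemma already yields $(1-\theta)\xi_\alpha\in(\Ad(g)\g{s})_\g{p}^\perp$ together with the description $(\Ad(g)\g{s})_\g{p}^\perp=(1-\theta)\bigl(\R\xi_\alpha\oplus\R(bH_\alpha+\eta_\alpha')\bigr)$ for some $b\in\R$ and some $\eta_\alpha'\in\g{g}_\alpha\ominus\R\xi_\alpha$ commuting with $\xi_\alpha$. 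The crux is to show $b=0$, which I will do by exhibiting $(1-\theta)\eta_\alpha$ itself as a second element of the new normal space.

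To carry this out I expand $\Ad(g)^*(1-\theta)\eta_\alpha=e^{\frac{a}{\lvert\xi_\alpha\rvert^2}\ad(\theta\xi_\alpha)}(1-\theta)\eta_\alpha$. The hypotheses $[\xi_\alpha,\eta_\alpha]=0$ and $\langle\xi_\alpha,\eta_\alpha\rangle=0$ force $\ad(\theta\xi_\alpha)^3\eta_\alpha=0$, so the series collapses to three explicit terms, yielding a vector of the form $\eta_\alpha+cT+(\text{element of }\theta\g{n})$, with $c\in\R$ and $T=[\theta\xi_\alpha,\eta_\alpha]\in\g{k}_0$ (the $\g{a}$-component of $[\theta\xi_\alpha,\eta_\alpha]$, namely $\langle\xi_\alpha,\eta_\alpha\rangle H_\alpha$, vanishes). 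The $\theta\g{n}$-summand is automatically $\g{g}$-orthogonal to $\g{s}\subseteq\g{t}\oplus\g{a}\oplus\g{n}$, and $\eta_\alpha$ is orthogonal to $\g{s}_{\g{a}\oplus\g{n}}$ by construction. The main obstacle is therefore the verification that $T\perp\g{s}\cap\g{t}$.

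This last step is where principality of the action becomes essential. Any $T_0\in\g{s}\cap\g{t}$ centralizes $\g{s}_\g{p}^\perp$, so $[T_0,\xi]=[T_0,\eta]=0$; combined with $[T_0,H_\alpha]=0$ and the injectivity of $1-\theta$ on $\g{g}_\alpha$, this forces $[T_0,\xi_\alpha]=[T_0,\eta_\alpha]=0$. The adjoint identity $\ad(X)^*=-\ad(\theta X)$ then delivers $\langle T,T_0\rangle=\langle\eta_\alpha,[T_0,\xi_\alpha]\rangle=0$. Linear independence of $(1-\theta)\xi_\alpha$ and $(1-\theta)\eta_\alpha$ then forces $(\Ad(g)\g{s})_\g{p}^\perp=(1-\theta)\g{v}_\alpha$, where $\g{v}_\alpha=\R\xi_\alpha\oplus\R\eta_\alpha$ is an abelian two-dimensional subspace of $\g{g}_\alpha$. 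Proposition~\ref{prop:normalVectorsInOneGalpha} applied to $\Ad(g)\g{s}$ gives $(\Ad(g)\g{s})_{\g{a}\oplus\g{n}}=\g{a}\oplus(\g{n}\ominus\g{v}_\alpha)$, and Proposition~\ref{prop:orbit-equivalence} (with $\g{z}=\g{a}$) upgrades this to orbit equivalence of $gSg^{-1}$ with the connected subgroup having Lie algebra $\g{a}\oplus(\g{n}\ominus\g{v}_\alpha)$. Since $g\in G$ acts as an isometry of $M$, the required orbit equivalence for $S$ follows.
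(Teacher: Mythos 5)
Your case~(\ref{prop:caseDim1OrbitEquivalence:ker+n-l}) and the overall strategy for case~(\ref{prop:caseDim1OrbitEquivalence:a+n-v}) --- conjugating by the same element $g=\Exp(-a\xi_{\alpha}/\lvert\xi_{\alpha}\rvert^{2})$, identifying the new normal space with $(1-\theta)\Span\{\xi_{\alpha},\eta_{\alpha}\}$, and then invoking Proposition~\ref{prop:orbit-equivalence} --- agree with the paper. However, the key orthogonality step in case~(\ref{prop:caseDim1OrbitEquivalence:a+n-v}) has a genuine gap. You reduce the claim $(1-\theta)\eta_{\alpha}\in(\Ad(g)\g{s})_{\g{p}}^{\perp}$ to showing that $\eta_{\alpha}+cT$ is orthogonal to $\g{s}$, where $T=[\theta\xi_{\alpha},\eta_{\alpha}]\in\g{k}_{0}$ and $c=a/\lvert\xi_{\alpha}\rvert^{2}\neq 0$, and you then declare the ``main obstacle'' to be $T\perp\g{s}\cap\g{t}$, which you verify via principality of the orbit. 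But that is not the statement you need: since $T\in\g{k}_{0}$ and $\g{s}\subseteq\g{t}\oplus\g{a}\oplus\g{n}$, an element $T'+X\in\g{s}$ with $T'\in\g{t}$, $X\in\g{a}\oplus\g{n}$ gives $\langle T,T'+X\rangle=\langle T,T'\rangle$, so what is required is $T\perp\g{s}_{\g{t}}$, the orthogonal projection of $\g{s}$ onto $\g{t}$. In general $\g{s}$ does not split as $(\g{s}\cap\g{t})\oplus\g{s}_{\g{a}\oplus\g{n}}$; the ``diagonal'' elements $T'+X$ with $T'\notin\g{s}$ are exactly what makes Proposition~\ref{prop:orbit-equivalence} nontrivial, and your argument (the isotropy algebra centralizes the normal space) applies only to elements of $\g{s}\cap\g{t}$, not to such $T'$.

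The missing fact is true, but it requires the polarity hypothesis, which your case~(\ref{prop:caseDim1OrbitEquivalence:a+n-v}) argument never uses beyond what is packaged in Lemma~\ref{lemma:normalap1}. By Proposition~\ref{th:criterion}, $[\xi,\eta]=a\lvert\alpha\rvert^{2}(1+\theta)\eta_{\alpha}-2[\theta\xi_{\alpha},\eta_{\alpha}]$ is orthogonal to $\g{s}$; since $\eta_{\alpha}$ and $\theta\eta_{\alpha}$ are both orthogonal to $\g{s}$, this yields $T\perp\g{s}$ at once, and with this substitution your proof closes. This is precisely the extra input the paper feeds in, there organized as a determinant argument showing that the transported vectors $\eta_{\alpha}-\tfrac{a}{\lvert\xi_{\alpha}\rvert^{2}}T$ and $a\lvert\alpha\rvert^{2}\eta_{\alpha}-\bigl(2+\tfrac{a^{2}\lvert\alpha\rvert^{2}}{\lvert\xi_{\alpha}\rvert^{2}}\bigr)T$ are orthogonal to $\Ad(g)\g{s}$, whence so are $\eta_{\alpha}$ and $T$. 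Incidentally, your assertion that $\ad(\theta\xi_{\alpha})^{3}\eta_{\alpha}=0$ is neither needed nor evidently true: it suffices to observe that every term of order at least two in the exponential series lies in $\theta\g{n}$, which is orthogonal to $\g{t}\oplus\g{a}\oplus\g{n}$, so the expansion modulo $\theta\g{n}$ is $\eta_{\alpha}+cT$ regardless.
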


\begin{proof}
If $\xi_\alpha=0$, then $\g{s}_\g{p}^\perp=\R H_\alpha\oplus\R(1-\theta)\eta_\alpha$ and $\g{s}_{\g{a}\oplus\g{n}}=(\g{a}\ominus\R H_\alpha)\oplus(\g{n}\ominus\R\eta_\alpha)$.
Then, statement~(\ref{prop:caseDim1OrbitEquivalence:ker+n-l}) follows directly from Proposition~\ref{prop:orbit-equivalence}.

We prove~(\ref{prop:caseDim1OrbitEquivalence:a+n-v}). 
We consider the element $g=\Exp(-\frac{a}{\lvert\xi_{\alpha}\rvert^{2}}\xi_{\alpha})\in N$. 
Since $\g{s}$ is orthogonal to $aH_{\alpha}+\xi_{\alpha}$ and $\eta_{\alpha}$, it follows that $\Ad(g)\g{s}$ is orthogonal to $\Ad(g^{-1})^{*}(aH_{\alpha}+\xi_{\alpha})$ and $\Ad(g^{-1})^{*}\eta_{\alpha}$. 
By direct computation,
\begin{align*}
\Ad(g^{-1})^{*}(aH_{\alpha}+\xi_{\alpha})
&{}=e^{-\frac{a}{\lvert\xi_{\alpha}\rvert^{2}}\ad(\theta\xi_{\alpha})}(aH_{\alpha}+\xi_{\alpha})
\equiv \xi_{\alpha} \pmod{ \theta\g{n}},\\
\Ad(g^{-1})^{*}\eta_{\alpha}
&{}=e^{-\frac{a}{\lvert\xi_{\alpha}\rvert^{2}}\ad(\theta\xi_{\alpha})}\eta_{\alpha}
\equiv \eta_{\alpha}-\frac{a}{\lvert\xi_{\alpha}\rvert^{2}}[\theta\xi_{\alpha},\eta_{\alpha}]\pmod{\theta\g{n}},
\end{align*}
and since $\Ad(g)\g{s}\subseteq\g{t}\oplus\g{a}\oplus\g{n}$, it follows that the vectors $\xi_{\alpha}$ and $\eta_{\alpha}-\frac{a}{\lvert\xi_{\alpha}\rvert^{2}}[\theta\xi_{\alpha},\eta_{\alpha}]$ are orthogonal to $\Ad(g)\g{s}$. 
On the other hand, the action of $S$ is polar, so $[\xi,\eta]=a\lvert\alpha\rvert^{2}(1+\theta)\eta_{\alpha}-2[\theta\xi_{\alpha},\eta_{\alpha}]$ is also orthogonal to $\g{s}$. 
As a consequence, $\langle a\lvert\alpha\rvert^{2}\eta_{\alpha}-2[\theta\xi_{\alpha},\eta_{\alpha}],\g{s}\rangle=0$. We deduce that
\[
\Ad(g^{-1})^{*}(a\lvert\alpha\rvert^{2}\eta_{\alpha}-2[\theta\xi_{\alpha},\eta_{\alpha}])
\equiv a\lvert\alpha\rvert^{2}\eta_{\alpha}
-\Bigl(2+\frac{a^2\lvert\alpha\rvert^{2}}{\lvert\xi_{\alpha}\rvert^{2}}\Bigr)[\theta\xi_{\alpha},\eta_{\alpha}]
\pmod{\theta\g{n}}
\]
is also orthogonal to $\Ad(g)\g{s}$. 
Because $\theta\g{n}$ is already orthogonal to $\g{s}$,
it follows that $a\lvert\alpha\rvert^{2}\eta_{\alpha}-\left(2+\frac{a^2\lvert\alpha\rvert^{2}}{\lvert\xi_{\alpha}\rvert^{2}}\right)[\theta\xi_{\alpha},\eta_{\alpha}]$ is perpendicular to $\g{s}$.
Since
\[
\begin{vmatrix}
1 & -\frac{a}{\lvert\xi_{\alpha}\rvert^{2}} \\
a\lvert\alpha\rvert^{2} & -2-\frac{a^2\lvert\alpha\rvert^{2}}{\lvert\xi_{\alpha}\rvert^{2}}
\end{vmatrix}
=-2<0,
\]
we deduce that $\eta_{\alpha}$ and $[\theta\xi_{\alpha},\eta_{\alpha}]$ are both orthogonal to $\g{s}$. 
We conclude that $(\Ad(g)\g{s})_{\g{p}}^{\perp}=(1-\theta)\g{v}_\alpha$, where $\g{v}_\alpha=\Span\{ \xi_{\alpha},\eta_{\alpha}\}$, and thus, $gSg^{-1}$ and the connected subgroup of $G$ whose Lie algebra is $\g{a}\oplus(\g{n}\ominus\g{v}_\alpha)$ act with the same orbits due to Proposition~\ref{prop:orbit-equivalence}.
\end{proof}

The proof of Theorem~\ref{th:MainTheoremList} follows now from the observation that Proposition~\ref{prop:caseDim1OrbitEquivalence}(\ref{prop:caseDim1OrbitEquivalence:ker+n-l}) corresponds to case~(\ref{th:main:ker+n-l}), and Proposition~\ref{prop:caseDim1OrbitEquivalence}(\ref{prop:caseDim1OrbitEquivalence:a+n-v}) corresponds to case~(\ref{th:main:a+n-v}).

\end{document}